\documentclass[11pt, leqno]{amsart}
\usepackage[utf8]{inputenc}
\usepackage{graphicx}
\usepackage[lmargin=25mm,rmargin=25mm,tmargin=33mm,bmargin=30mm]{geometry}
\usepackage[english]{babel}

\usepackage{protosem}
\usepackage{xcolor}

\usepackage{amsmath,amsfonts}
\usepackage{amssymb}
\usepackage{amsthm}
\usepackage{mathtools}
\usepackage{csquotes}
\usepackage{mathrsfs}
\usepackage{textcomp}

\makeatletter
\@namedef{subjclassname@2020}{%
  \textup{2020} Mathematics Subject Classification}
\makeatother

\newtheorem{theorem}{Theorem}[section]
\newtheorem{corollary}{Corollary}[theorem]
\newtheorem{lemma}[theorem]{Lemma} 
\newtheorem{proposition}[theorem]{Proposition} 
\newtheorem{remark}{Remark}[theorem]
\numberwithin{equation}{section}
\theoremstyle{definition}
\newtheorem{definition}{Definition}[section]

\newcounter{example}
\newenvironment{example}[1][]{\refstepcounter{example}\par\medskip
   \textbf{Example~\theexample. #1} \rmfamily}{\medskip}

   
\newcommand{\C}{\ensuremath{{\mathbb C}}}


\author{ NICHOLAS AIDOO}

\address{School of Mathematics, Trinity College Dublin, Dublin 2, Ireland}

\email{aidoon@tcd.ie}

\subjclass[2020]{Primary  32F18; Secondary 32T27; 32T25.}

\keywords{sum of squares domains, Catlin multitype, finite type domains in $\C^n,$ the Kol\' a\v r algorithm}

\title{ON THE CATLIN MULTITYPE OF SUMS OF SQUARES DOMAINS}
\date{}

\begin{document}

\maketitle

\noindent ABSTRACT: {\normalsize For a sum of squares domain of finite D'Angelo 1-type at the origin, we show that the polynomial model obtained from the computation of the Catlin multitype at the origin of such a domain is likewise a sum of squares domain. We also prove, under the same finite type assumption, that the multitype is an invariant of the ideal of holomorphic functions defining the domain. Both results are proven using Martin Kol\' a\v r's algorithm for the computation of the multitype introduced in \cite{martinIMRN}. Given a sum of squares domain, we rewrite the Kol\' a\v r algorithm in terms of ideals of holomorphic functions and also introduce an approach that explicitly constructs the homogeneous polynomial transformations used in the algorithm.}

\vspace{1cm}

\tableofcontents

\large
\section{Introduction}
Domains defined by sums of squares of holomorphic functions constitute a very important class in the field of several complex variables as they connect in a very natural way complex analysis with algebraic geometry. This class of domains was introduced by J.J. Kohn in his Acta paper \cite{kohnacta}  under the term \textit{special domains}. Kohn's novel introduction of the technique of subelliptic multiplier ideals to address the local regularity problem paved the way for significant discoveries in several complex variables. In \cite{opendangelo} and \cite{dangelo}, D'Angelo studied the local geometry of real hypersurfaces by assigning to every point on the hypersurface an associated family of ideals of holomorphic functions and exploring various invariants in commutative algebra and algebraic geometry. He established a close connection between the geometry of sums of squares domains and complex algebraic geometry. Further work by Y.-T. Siu in \cite{siunote} and \cite{siunew} on sums of squares domains introduced new approaches for generating multipliers for general systems of partial differential equations. Owing to his initial work on sums of squares domains, Y.-T. Siu gave an extension of the special domain approach to real analytic and smooth cases. S.-Y. Kim and D. Zaitsev in \cite{kimzaitsev} proposed a new class of geometric invariants called \textit{the jet vanishing orders} and used them to establish a new selection algorithm in the Kohn's construction of subelliptic multipliers of sums of squares domains in dimension 3. Also, in a recent paper by the same authors \cite{KZ21}, they provide a solution to the effectiveness problem in Kohn's algorithm for generating multipliers for domains including those defined by sums of squares of holomorphic functions in all dimensions. Other important results pertaining to sums of squares domains can be found in  \cite{fribourgcatda}, \cite{catlincho}, \cite{Fa20}, \cite{khanhzampieri}, and \cite{dimafrancinemartin}.

A key motivation for this paper is to introduce some important tools and techniques necessary for the study of the multitype level set stratification of sums of squares domains. 
We will focus here on the multitype computations for such domains. Our main tool is an algorithm devised by M. Kol\' a\v r in \cite{martinIMRN} for the computation of the Catlin multitype when it has finite entries. In order to ensure this condition is satisfied, we will assume finite D'Angelo 1-type throughout since the latter bounds from above the last entry of the multitype; see \cite{catlinbdry}. 

Owing to the connection between the properties of the associated family of ideals of holomorphic functions and the geometry of sums of squares domains, a natural question is whether or not the multitype could be computed from the corresponding ideals of holomorphic functions. We provide an answer to this question by showing that the multitype of a sum of squares domain can be computed from the related ideal of holomorphic functions via the  restatement of the Kol\' a\v r algorithm at the level of ideals. Besides the fact that working with ideals aligns better with complex algebraic geometry, this restatement also reduces significantly the amount of work involved in computing the Catlin multitype for sums of squares domains.

\medskip A \textit{sum of squares domain} $\Omega \subset \mathbb{C}^{n+1}$ is one whose boundary defining function $r(z)$ is given by 
\begin{equation} \label{sos} 
 r(z) = 2 \text{Re}(z_{n+1}) + \sum_{j=1}^{N} |f_j(z_1, \dots, z_{n+1})|^2,
\end{equation}
where $f_j(z_1, \dots, z_{n+1})$ for all $j, \ 1 \leq j \leq N$ are holomorphic functions vanishing at the origin in $\C^{n+1}$. We shall denote by $M \subset \C^{n+1}$ the hypersurface defined by $\{ z \in \C^{n+1} \ | \ r(z) = 0 \}.$

The \textit{model hypersurface} associated to $M$ at the origin is defined as
\begin{equation} \label{mod}
M_H = \{z \in \C^{n+1} : \ \text{H}(z, \bar z) = 0 \}, 
\end{equation}
the zero locus of the homogeneous polynomial $\text{H}(z, \bar z)$ consisting of all monomials from the Taylor expansion of the defining function that have weight 1 with respect to the multitype weight. We refer to $\text{H}(z, \bar z)$ as the \textit{model polynomial}. 
\medskip

Our first main result concerns the form of the model polynomial $\text{H}(z, \bar z)$:

\begin{theorem} \label{thm1}
Let $M \subset \mathbb{C}^{n+1}$ be a hypersurface whose defining function is given as
\[r(z) = \text{2Re}(z_{n+1}) + \sum_{j=1}^{N}|f_j(z_1, \dots, z_{n+1})|^2,\]
where $f_1, \dots, f_N$ are holomorphic functions defined on a neighborhood of the origin and assume that the D'Angelo 1-type of $M$ at the origin is finite. Then the model associated to $M$ is also a sum of squares 
domain. 
\end{theorem}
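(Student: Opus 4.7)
The plan is to run Kolar's algorithm on the sum-of-squares defining function $r$ while tracking the sum-of-squares structure, then to exploit the non-negativity of each $|f_j|^2$ to constrain the lowest weights that can appear in the $f_j$'s.

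First, I would observe that the polynomial transformations employed by Kolar's algorithm preserve the sum-of-squares form. Since each $|f_j|^2 = f_j \overline{f_j}$ is non-negative and vanishes at the origin, the Taylor expansion of $\sum_{j=1}^{N} |f_j|^2$ contains no pluriharmonic monomials, so the only pluriharmonic term in the Taylor expansion of $r$ is $2\text{Re}(z_{n+1})$. Consequently, the pluriharmonic-absorbing transformations of the algorithm act trivially on $z_{n+1}$, and the remaining weighted homogeneous transformations $\Phi$ act only on the complex tangential variables $(z_1, \dots, z_n)$. Pulling back $r$ yields
\begin{equation*}
 r \circ \Phi = 2\text{Re}(z_{n+1}) + \sum_{j=1}^N |f_j \circ \Phi|^2,
\end{equation*}
which is again of the form \eqref{sos}. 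Hence at every stage of the algorithm the defining function is a sum of squares.

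Next, I would prove the key claim: once the algorithm terminates with multitype weight $\Lambda = (\lambda_1, \dots, \lambda_n, 1)$ and transformed holomorphic functions (still denoted $f_j$), every Taylor monomial $c_{j,\alpha} z^\alpha$ of $f_j$ has weight $w(\alpha) := \sum_{i=1}^{n+1} \lambda_i \alpha_i \geq 1/2$. Indeed, by the defining property of the multitype weight, the Taylor expansion of $r$ contains no non-pluriharmonic monomial of weight strictly less than one. The diagonal contribution $|c_{j,\alpha}|^2 |z^\alpha|^2$ inside $|f_j|^2$ is non-negative and cannot be cancelled by the other squared terms, so its weight $2w(\alpha)$ must be at least one, forcing $w(\alpha) \geq 1/2$.

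Given this lower bound, I would decompose each $f_j$ into weighted homogeneous parts $f_j = \sum_{w \geq 1/2} f_j^{(w)}$ and compute the weight-one part of $|f_j|^2$. Writing
\begin{equation*}
 |f_j|^2 = \sum_{w_1, w_2 \geq 1/2} f_j^{(w_1)} \, \overline{f_j^{(w_2)}},
\end{equation*}
the constraint $w_1 + w_2 = 1$ forces $w_1 = w_2 = 1/2$, so the weight-one part of $|f_j|^2$ equals $\bigl|f_j^{(1/2)}\bigr|^2$ (possibly zero). Summing over $j$ and adjoining the pluriharmonic linear term $2\text{Re}(z_{n+1})$ yields
\begin{equation*}
 \text{H}(z, \bar z) = 2\text{Re}(z_{n+1}) + \sum_{j=1}^N \bigl|f_j^{(1/2)}\bigr|^2,
\end{equation*}
exhibiting the model as a sum-of-squares domain, as claimed. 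The main obstacle, I expect, is the second step: justifying, within the ideal-theoretic reformulation of Kolar's algorithm developed in the paper, that the multitype weight indeed forbids monomials of weight below $1/2$ in each transformed $f_j$. This argument depends crucially on the positivity of each $|f_j|^2$ and the resulting inability to cancel diagonal terms, a phenomenon special to the sum-of-squares setting.
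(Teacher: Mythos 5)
Your proof is correct, but it follows a genuinely different route from the paper's. The paper deduces Theorem \ref{thm1} from the stronger Theorem \ref{thm5}: it runs through the Kol\' a\v r algorithm step by step and, using the case analysis of Lemma \ref{nl} comparing the $W_t$-values of squares and cross terms, shows that whenever some squares $|f_{k,a_l}|^2$ coming from a single generator attain the maximal $W_t$-value, precisely the cross terms needed to complete the square attain it as well, so that each piece $P_{t,k}$ of each intermediate leading polynomial is a perfect square $|f_{k,b_1}+\cdots+f_{k,b_{v_k}}|^2$. You instead work only in the final $\Lambda^*$-adapted coordinates and exploit positivity: the coefficient of the diagonal monomial $z^{\alpha}\bar z^{\alpha}$ in $\sum_j |f_j|^2$ is $\sum_j |c_{j,\alpha}|^2$, which no cross term can cancel, so every surviving monomial of every transformed generator must have weight at least $\frac{1}{2}$, and the weight-one part of $\sum_j|f_j|^2$ collapses to $\sum_j |f_j^{(1/2)}|^2$. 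Your argument is shorter and isolates the single phenomenon (non-cancellation of diagonal terms) that drives the result, at the cost of not directly yielding the intermediate statement that every leading polynomial $P_t$ is a sum of squares --- a statement the paper needs later for the ideal restatement of the algorithm in Section 6 and for Proposition \ref{P1}. That stronger statement is, however, within reach of your method: the coordinates at step $t$ are $\Lambda_t$-adapted, so the same diagonal argument applies with $\Lambda_t$ in place of $\Lambda^*$. One step you should spell out is why the algorithm's changes of variables may be taken to fix $z_{n+1}$: the shift $\tilde w = w+g$ is only ever needed to absorb pluriharmonic terms, and $|h|^2$ with $h(0)=0$ produces none, while the tangential shifts $\tilde z_j=z_j+f_j$ cannot involve $w$ because $w$ has weight $1>\lambda_j$; this is exactly the paper's reduction to the decoupled case.
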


In other words, the model polynomial
\begin{equation} \label{modp}
\text{H}(z, \bar z)= 2 \text{Re}(z_{n+1}) + \sum_{j=1}^{N} |h_j(z_1, \dots, z_n)|^2, 
\end{equation}
 where $h_j$ is a polynomial consisting of all terms from the Taylor expansion of $f_j$ of weight $1/2$ with respect to the multitype at the origin, $j=1,\dots, N.$ Note that the corresponding model hypersurface $M_H = \{z \in \C^{n+1} : \ \text{H}(z, \bar z) = 0 \}$ is a decoupled sum of squares domain since variable $z_{n+1}$ has weight 1, so no $h_j$ can depend on it. By Catlin's results in \cite{catlinbdry}, $M_H$ has the same multitype at the origin as the original domain. Therefore, with respect to all multitype computations, sums of squares domains behave as if they were decoupled.

Our second main result answers in the affirmative a question posed by D. Zaitsev during a discussion with the author:

\begin{theorem} \label{thm2}
Let $0\in M \subset \mathbb{C}^{n+1}$ be a hypersurface whose defining function is given as
\[r(z) = \text{2Re}(z_{n+1}) + \sum_{j=1}^{N}|f_j(z_1, \dots, z_{n+1})|^2,\]
where $f_1, \dots, f_N$ are holomorphic functions defined on a neighborhood of the origin and assume that the D'Angelo 1-type of $M$ is finite at the origin. 

Then the Catlin multitype is an invariant of the ideal of holomorphic functions defining the domain.
\end{theorem}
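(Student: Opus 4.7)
The plan is to combine Theorem \ref{thm1} with the ideal-theoretic restatement of Kolář's algorithm established earlier in the paper. Let $I = (f_1, \dots, f_N) \subset \mathcal{O}_{\C^{n+1}, 0}$, and let $(g_1, \dots, g_M)$ be any other generating set for $I$. To prove the invariance statement, one must show that the algorithm produces the same sequence of weights whether run on $\sum_j |f_j|^2$ or on $\sum_i |g_i|^2$. Writing $g_i = \sum_j A_{ij} f_j$ with holomorphic germs $A_{ij}$, my first task is to track how the weight-$1/2$ truncations that build the model transform under this change of generators.

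The central observation is that for any weight assignment $\Lambda$ with positive rational weights on $z_1, \dots, z_n$, the finite-dimensional vector space $V_\Lambda(I)$ spanned by the weight-$1/2$ truncations of elements of $I$ is intrinsic to $I$. Indeed, if $g = \sum_j A_j f_j$, then the weight-$1/2$ part of $g$ equals $\sum_j A_j(0) \cdot (\text{weight-}1/2\text{ part of } f_j)$ plus contributions coming from higher-order Taylor coefficients of the $A_j$, and the latter can only push weight strictly above $1/2$ since all weights in $\Lambda$ are positive. This identifies $V_\Lambda(I)$ with a graded piece of $I$ in the weight filtration of $\mathcal{O}_{\C^{n+1}, 0}$, and hence it is a genuine invariant of $I$. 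By Theorem \ref{thm1}, the model polynomial is then determined by $V_\Lambda(I)$ through its sum-of-squares form, so the model itself is invariant under the change of generators.

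With this invariance of the model in hand, I would run an induction on the step of the Kolář algorithm. At stage $k$, the weight $\mu_k$ is selected by inspecting the current weight-filtered ideal, and the residual data passed to stage $k+1$ is again a weight-filtered invariant of $I$. The finite D'Angelo $1$-type assumption guarantees termination in finitely many steps, so the full multitype is assembled entirely from ideal invariants of $I$, and Theorem \ref{thm2} follows.

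The main obstacle will be justifying the filtration compatibility at each inductive step, and in particular handling the polynomial coordinate transformations that Kolář's algorithm performs at intermediate stages. These transformations depend a priori on the chosen generators, but because they only involve variables whose weights have already been fixed, they should respect the weight filtration; the careful verification of this, together with checking that no choice of basis of $V_\Lambda(I)$ influences the subsequent weights produced, is where the genuine technical work lies.
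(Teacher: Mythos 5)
Your first step is sound: for a fixed weight $\Lambda$ with all positive entries, the weight-exactly-$\tfrac12$ parts of elements of $I$ do form a vector space spanned by the weight-$\tfrac12$ parts of any generating set, because if $g=\sum_j A_j f_j$ then only the constants $A_j(0)$ can contribute at the lowest weight level. This correctly shows the first weight (the Bloom--Graham type) and, with some extra work about spanning sets versus particular sums of squares, the leading polynomial's essential content are ideal invariants. However, the proposal has a genuine gap at the inductive step, and it is exactly where the whole difficulty of the theorem lies. The next weight $\lambda^{t+1}$ is computed by maximizing $W_t$ over the \emph{leftover} terms, i.e.\ over monomials of the generators of weight strictly greater than $\tfrac12$. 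Unlike the lowest graded piece, these higher-weight truncations are \emph{not} determined by the ideal: in $g=\sum_j A_j f_j$ the positive-order Taylor coefficients of the $A_j$ multiply lower-weight monomials of the $f_j$ and land in the higher graded pieces, where they can cancel precisely the monomial whose square realizes the maximal $W_t$-value. Your sentence ``the residual data passed to stage $k+1$ is again a weight-filtered invariant of $I$'' asserts the conclusion rather than proving it; the assertion is false at the level of the graded pieces themselves, and what must be shown instead is that whenever such a cancellation kills a maximal square $|m|^2$ coming from one generator, the cancelling monomial $\psi=h_{c,s}f_{c,j}$ forces $h_{c,s}$ to be a nonzero constant (any higher-order factor strictly decreases the $W_t$-value, by a comparison of numerators and denominators of $W_t$), so that $|f_{c,j}|^2$ from another generator realizes the same maximal value. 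Nothing in your proposal substitutes for this argument.

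For comparison, the paper avoids trying to make every stage of the algorithm manifestly ideal-theoretic. It proves a one-generator-at-a-time replacement lemma (Lemma~\ref{mulidinv}): replacing $f_l$ by $h_lf_l-\sum_{c\neq l}h_cf_c$ without changing the ideal preserves the multitype, the proof being exactly the cancellation analysis sketched above, built on Lemma~\ref{nl}'s fact that every multitype entry is realized by a square of a monomial. The special case $h_l\equiv 1$ (Remark~\ref{lincomb}) says that adjoining or deleting the square of any element of the ideal is harmless; Theorem~\ref{thm2} then follows by adjoining all the $|g_i|^2$ to $\sum_j|f_j|^2$ and, symmetrically, all the $|f_j|^2$ to $\sum_i|g_i|^2$, and observing both augmentations coincide. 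If you want to salvage your filtration-based approach, you would still need to prove the cancellation lemma; at that point you would essentially have reconstructed the paper's proof. A smaller inaccuracy worth noting: the intermediate $\Lambda_j$-homogeneous transformations do involve variables whose final weights are not yet fixed (e.g.\ $\tilde z_1=z_1+z_3^2$ in Example~\ref{K1}), so your justification that they ``only involve variables whose weights have already been fixed'' does not hold as stated.
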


Computing the multitype via the Kol\' a\v r algorithm requires one to use weighted homogeneous polynomial transformations at each step in order to generate an intermediate weight and a partial model polynomial that depends on a minimal number of variables. How these homogeneous polynomial transformations are obtained was not explicitly described in \cite{martinIMRN}, however. In this paper, we characterize these weighted homogeneous polynomial transformations and explicitly construct them by relating them to elementary row and column operations on the Levi matrix associated to the domain. More specifically, we devise an algorithm for the construction of these homogeneous polynomial transformations, the row reduction algorithm. We call a polynomial change of variables at step $j$ of the Kol\' a\v r algorithm {\it allowable with respect to variable $z_k$} if it eliminates the occurrence of variable $z_k$ from the leading polynomial $P_j$ and reduces the number of variables in $P_j$ by at least one. For the correspondence between polynomial changes of variables and elementary row and column operations on the Levi matrix associated to the domain, we introduce a variant of the notion of dependence compared to the standard one in linear algebra as follows: Let $\text{A}_{P_j}$ be the Levi matrix of the leading polynomial $P_j$ at step $j$ of the Kol\' a\v r algorithm. For a given $k,$ denote by $R_k$ and $C_k$ the $k$-th row and the $k$-th column of the matrix $\text{A}_{P_j}$ respectively. Let $\mathscr{R}$ be the set of all rows of the matrix  $\text{A}_{P_j}.$ We call a row $R_k$ of $\mathscr{R}$  \textit{dependent} if it satisfies the condition:

\[
R_k = \sum_{\substack{l=1 \\
            l \neq k}}^n \alpha_l R_l,  
\]
where $\alpha_l \in \C[z], $ $\alpha_l \neq 0$ for at least one $l.$ In other words, the row $R_k$ is dependent if it can be written as a linear combination of the other rows with polynomial coefficients. The set $\mathscr{R}$ is said to be \textit{dependent} if at least one of the rows is dependent.

\smallskip Since the matrix $\text{A}_{P_j}$ is Hermitian, a similar definition holds for the $k$-th column $ C_k $ of $\text{A}_{P_j} $ namely $$ C_k = \sum_{\substack{l=1\\
           l \neq k}}^n \bar{\alpha}_l C_l.$$ 

\bigskip\noindent Hence, we prove the following:

\begin{proposition} \label{P1}
Assume that the D'Angelo 1-type of the hypersurface $M$ in $\C^{n+1}$ at 0 is finite. At step $j$ of the Kol\' a\v r algorithm for the computation of the 
multitype at 0, let $P_j$ be the leading polynomial, and let $Q_j$ be the leftover polynomial.
 Let $k$ be given, $1 \leq k \leq n.$ There exists an allowable polynomial transformation on $P_j$ with respect to the variable $z_k$ if and only if the $k$-th row of $\text{A}_{P_j}$ is dependent. 
\end{proposition}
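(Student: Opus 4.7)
The plan is to exploit the sum-of-squares representation of $P_j$ provided by Theorem~\ref{thm1} in order to factor the Levi matrix in a form that converts the row dependence into a first-order PDE system and back. Writing $P_j = \sum_{i=1}^{N}|h_i|^{2}$ for suitable weighted-homogeneous holomorphic polynomials $h_i$, the entries of the Levi matrix are
\[
(A_{P_j})_{lm} \;=\; \sum_{i=1}^{N}(\partial_l h_i)\,\overline{(\partial_m h_i)},
\]
i.e.\ $A_{P_j} = J^{T}\bar J$ with $J_{il}=\partial h_i/\partial z_l$. This factorization is the bridge between the linear-algebraic condition on rows of $A_{P_j}$ and the existence of an allowable polynomial transformation on $P_j$.

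\medskip

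For the forward direction, assume $\Phi$ is allowable with respect to $z_k$, so that in the new coordinates $w=\Phi(z)$ the transformed leading polynomial has no dependence on the $k$-th variable. The Levi matrix transformation rule $\tilde A = T^{T} A_{P_j}\,\bar T$ with $T = D\Phi^{-1}$, combined with the vanishing of the $k$-th row of $\tilde A$ and invertibility of $\bar T$, yields $(T e_k)^{T} A_{P_j} = 0$, a polynomial linear relation among the rows $R_l$ of $A_{P_j}$ with coefficients $T_{lk}$. The diagonal entry $T_{kk}$ is weighted-homogeneous of weight zero, hence a nonzero constant, so it can be normalized to $1$; invoking the Hermitian symmetry of $A_{P_j}$ then delivers precisely $R_k = \sum_{l\neq k}\alpha_l R_l$ with $\alpha_l = -T_{lk}$.

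\medskip

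For the backward direction, given such a dependence, I would set $u_k = 1$ and $u_l = -\bar\alpha_l$; the Hermitian symmetry rewrites $R_k = \sum \alpha_l R_l$ as $A_{P_j}\,u = 0$. Pairing by $\bar u^{T}$ on the left gives
\[
0 \;=\; \bar u^{T} A_{P_j}\,u \;=\; \sum_{i=1}^{N}\Bigl|\sum_{l=1}^{n}\bar u_{l}\,\partial_l h_i\Bigr|^{2},
\]
and since a sum of squared moduli of holomorphic polynomials in $\C[z,\bar z]$ vanishes identically only if each summand does, one obtains $\partial_k h_i = \sum_{l\neq k}\alpha_l\,\partial_l h_i$ for every $i$. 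The holomorphic vector field $X = \partial_k - \sum_{l\neq k}\alpha_l\partial_l$ therefore annihilates every $h_i$, and integrating $X$ within the weighted-homogeneous polynomial category produces polynomials $\beta_l$ such that the change of variables $w_l = z_l + \beta_l$ ($l\neq k$), $w_k = z_k$, makes each $h_i$ independent of $w_k$. This is the required allowable transformation.

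\medskip

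I expect the main obstacle to be the polynomial integration of $X$ in the backward direction, i.e.\ the explicit construction of the $\beta_l$'s. The weighted homogeneity of $P_j$ and of the coefficients $\alpha_l$ constrains their shapes and reduces the integration to a finite-dimensional problem in the graded ring, but verifying that the resulting transformation simultaneously eliminates every $h_i$'s $w_k$-dependence while remaining in the appropriate weighted-homogeneous class will require careful bookkeeping on weights. The decisive structural input throughout is the positivity of $\bar u^{T} A_{P_j}\, u$, which is what lifts the purely algebraic row dependence to the structural PDE on the $h_i$'s; without the sum-of-squares representation of $P_j$ supplied by Theorem~\ref{thm1}, this lift would not be available.
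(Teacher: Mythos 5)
Your reduction of the row dependence to a holomorphic relation on the generators is correct and is genuinely different from (and cleaner than) what the paper does. Writing $A_{P_j}=J^{T}\bar J$ with $J_{il}=\partial h_i/\partial z_l$, setting $u_k=1$, $u_l=-\bar\alpha_l$, and observing that the dependence $R_k=\sum_{l\neq k}\alpha_l R_l$ is equivalent (by Hermitian symmetry) to $A_{P_j}u=0$, you get $0=\bar u^{T}A_{P_j}u=\sum_i\bigl|\partial_k h_i-\sum_{l\neq k}\alpha_l\partial_l h_i\bigr|^{2}$, hence $\partial_k h_i=\sum_{l\neq k}\alpha_l\,\partial_l h_i$ for every $i$. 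The paper never isolates this identity; it works entry by entry with the quantities $a_{k\bar t}=\sum_s\phi^s_k\bar\phi^s_t$ and verifies after the fact that the $k$-th row and column of the transformed Levi matrix vanish. Your positivity argument packages the same information at the level of the holomorphic generators in one line, and it meshes naturally with the Jacobian-module point of view the paper develops later in Section 7. Your forward direction (allowable $\Rightarrow$ dependent) via the transformation rule $\tilde A=T^{T}A_{P_j}\bar T$ is essentially the paper's converse argument (which instead reverses the elementary row/column operations); the only point to nail down is that $T_{kk}$, being of weighted degree $0$, is a nonzero constant, which the paper also leaves implicit by writing every transformation as $\tilde z_i=z_i+\gamma^i$.

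The genuine gap is the step you yourself flag: ``integrating $X=\partial_k-\sum_{l\neq k}\alpha_l\partial_l$ within the weighted-homogeneous polynomial category'' is asserted, not done, and it is exactly where the work lies. Straightening a polynomial vector field by a polynomial change of variables is false in general (the flow of a polynomial field need not be polynomial), so something specific to this situation must be used. The paper closes the gap as follows: comparing weighted degrees of the entries in the relation $R_k=\sum\alpha_l R_l$ shows $\alpha_l$ is $\Lambda_j$-homogeneous of weighted degree $\lambda_l-\lambda_k$, hence cannot involve $z_l$ and vanishes unless $\lambda_l>\lambda_k$; the change of variables is then taken explicitly to be the shear $\tilde z_l=z_l+\gamma^l$ with $\gamma^l=\int_0^{z_k}\alpha_l(\zeta_k)\,d\zeta_k$ (so $\partial_{z_k}\gamma^l=\alpha_l$), which is $\Lambda_j$-homogeneous; and the verification that the composition of these shears annihilates the $k$-th row and column, and therefore all $z_k$-dependence of $P_j$, is carried out through the correspondence between monomial shears and elementary row/column operations established in Lemma~\ref{rcoperationi}. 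To complete your proof you would need to supply this weight analysis (in particular that the triangular structure $\lambda_l>\lambda_k$ makes the one-shot primitive work) and then check directly, as the paper does via \eqref{dp5}, that $\partial_{w_k}$ of each transformed $h_i$ vanishes identically rather than merely at the level of the formal relation.
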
  

Using this characterization of the polynomial transformations and the restatement of the Kol\' a\v r algorithm in terms of ideals of holomorphic functions,  the row reduction algorithm connects nicely the notion of simplifying the Jacobian module associated to a sum of squares domain with elementary row operations on the complex Jacobian matrix of the same domain. By employing this algorithm at every step of the Kol\' a\v r algorithm for the computation of the multitype, we are able to construct the weighted homogeneous polynomial transformations needed in the Kol\' a\v r algorithm.
 
This paper is structured in the following manner: Section 2 describes some notation and provides definitions that are pertinent to our discussions in subsequent sections. Section 3 defines the Catlin multitype and provides a thorough description of the Kol\' a\v r algorithm as introduced in \cite{martinIMRN}. Section 4 presents a key lemma for the characterization of the multitype entries of the sum of squares domain. Specifically, we establish the fact that each multitype entry can be realized by the modulus square of some monomial of vanishing order at least one in the multivariate ring of polynomials over $\C.$  Section 5 contains the proofs of Theorems \ref{thm1} and \ref{thm2}. Section 6 presents a modified version of the Kol\' a\v r algorithm in terms of ideals of holomorphic polynomials. In the same section, a corollary to Theorem \ref{thm1} is stated and proven. Finally, section 7 contains the proof of Proposition \ref{P1} and also provides a detailed description of the row reduction algorithm for the construction of the weighted homogeneous polynomials required in the Kol\' a\v r algorithm. 

\medskip
This article constitutes part of the author's Ph.D thesis at Trinity College Dublin under the supervision of Andreea Nicoara.

\medskip
\noindent \textbf{Acknowledgements}: Special thanks goes to the author's supervisor Andreea Nicoara for all her valuable remarks and guidance during our many discussions. The author would also like to thank Dmitri Zaitsev for his useful comments and Berit Stens\o nes for her encouragement.

\vspace{0.3cm}

\section{Definitions and Notation}
We present some definitions we use in the article following the setup of Kol\' a\v r in \cite{martinIMRN}. Let $M$ be a hypersurface in $\C^{n+1}$ and $p \in M$ be a Levi degenerate point. We will assume that $p$ is a point of finite D'Angelo 1-type. Let $(z,w)$ be local holomorphic coordinates centered at the point $p$, where $w = u +iv$ is the complex non-tangential variable and the complex tangential variables are in the n-tuple $z = (z_1, \dots, z_n)$ with $z_k = x_k + i y_k$. Throughout this paper, we will compute and define weights by considering only the complex tangential variables $z_1, \dots, z_n$ as in \cite{martinIMRN}.

\begin{definition} \label{def2}
A weight $\Lambda = (\mu_1, \dots, \mu_n)$ is an n-tuple of rational numbers with $0 \leq \mu_j \leq \frac{1}{2}$ satisfying:
\begin{itemize}
\item[i.] $\mu_j \geq \mu_{j+1}$ for $1 \leq j \leq n-1;$ 
\item[ii.] For each $t$, either $\mu_t = 0$ or there exists a sequence of nonnegative integers $a_1, \dots, a_t$ satisfying $a_t > 0$ such that
\[\sum_{j=1}^t a_j \mu_j = 1.\]
\end{itemize}
\end{definition}

 Let $\Lambda$ be a weight. If $\alpha = (\alpha_1, \dots, \alpha_n)$ is a multiindex, then we define the weighted length of $\alpha$ by
\[|\alpha|_{\Lambda} =  \sum_{j=1}^n \alpha_j \mu_j.\] 
Also if $\alpha = (\alpha_1, \dots, \alpha_n)$ and $ \hat{\alpha} = (\hat{\alpha}_1, \dots, \hat{\alpha}_n)$ are multiindices then the weighted length of the pair $(\alpha, \hat{\alpha})$ is defined by
\[|(\alpha, \hat{\alpha})|_{\Lambda} = \sum_{j=1}^n (\alpha_j + \hat{\alpha}_j) \mu_j.\]

\begin{definition} \label{def3}
A monomial $A_{\alpha \hat{\alpha} l} z^{\alpha} z^{\hat{\alpha}} u^l$ is said to be of weighted degree $\kappa$ if 
\[\kappa: = l + |(\alpha, \hat{\alpha})|_{\Lambda}.\]
Similarly, we define the weighted order of the differential operator $D^{\alpha}\bar{D}^{\hat{\alpha}}D^l$ to equal to $\kappa: = l + |(\alpha, \hat{\alpha})|_{\Lambda}$, where 
\[D^{\alpha} = \frac{\partial^{|\alpha|}}{\partial z_1^{\alpha_1} \cdots \partial z_n^{\alpha_n}}, \ \ \bar{D}^{\hat{\alpha}} = \frac{{\partial}^{\hat{|\alpha|}}}{{\partial} \bar{z}_1^{\hat{\alpha}_1} \cdots {\partial} \bar{z}_n^{\hat{\alpha}_n}}, \ \ \text{and} \ \ D^l = \frac{{\partial}^l}{\partial u^l}.\]

\medskip \noindent A polynomial $P(z, \bar{z}, u)$ is said to be $\Lambda$-homogeneous of weighted degree $\kappa$ if it is a sum of monomials of weighted degree $\kappa$.
\end{definition}
We shall set the variable $w$ as well as the variables $u \ \text{and} \ v$ to have a weight of one. 

\begin{definition} \label{def4}
A weight $\Lambda = (\mu_1, \dots, \mu_n)$ is said to be \textit{distinguished} if there exist local holomorphic coordinates $(z,w)$ mapping $p$ to the origin such that the boundary-defining equation for $M$ in the new coordinates is of the form
\begin{equation} \label{def8}
v= P(z, \bar{z}) + o_{\Lambda}(1),
\end{equation}
where $P(z, \bar{z})$ is a $\Lambda$-homogeneous polynomial of weighted degree 1 without pluriharmonic terms and $o_{\Lambda}(1)$ denotes a smooth function whose derivatives of weighted order less than or equal to 1 vanish at zero.

\end{definition}

\medskip We order the weights lexicographically. This means that for the pair of weights $\Lambda_1 = (\mu_1, \dots, \mu_n)$ and $\Lambda_2 = (\mu_1^{'}, \dots, \mu_n^{'})$, $\Lambda_1 > \Lambda_2$ if for some $t$, $\mu_j = \mu_j^{'}$ for $j <t$ and $\mu_t> \mu_t^{'}$. 

\begin{definition} \label{def5}
Let $\Lambda = (\lambda_1, \dots, \lambda_n)$ be a weight and 
\[
\tilde{w} = w + g(z_1, \dots, z_n, w) \:\: \text{and} \:\: \tilde{z}_j = z_j + f_j(z_1, \dots, z_n, w),
\] 
for $1 \leq j \leq n,$ be a holomorphic change of variables. We say that this transformation is:

\begin{itemize}
\item[i.] $\Lambda$-homogeneous if $f_j$ is a $\Lambda$-homogeneous polynomial of weighted degree $\lambda_j$ and $g$ is a $\Lambda$-homogeneous polynomial of weighted degree 1,
\item[ii.] $\Lambda$-superhomogeneous if $f_j$ has a Taylor expansion consisting of monomials that have weighted degree $\geq \lambda_j$ and $g$ consists of terms of weighted degree $\geq 1$,
\item[iii.] $\Lambda$-subhomogeneous if the Taylor expansion of $f_j$ consists of terms of weighted degree $\leq \lambda_j$ and $g$ consists of weighted degree $\leq 1$.
\end{itemize}
\end{definition}

J. J. Kohn introduced the notion of type of a point on a pseudoconvex hypersurface in $\C^2$ in \cite{kboundary}. In \cite{bloomgraham} Thomas Bloom and Ian Graham generalized Kohn's notion to $\C^n$ and gave a geometric characterization of type of points on real hypersurfaces in $\C^n$.
Here is the definition given by Bloom and Graham: Let $\mathcal{N}$ be a real $C^\infty$ hypersurface defined in an open subset $U \subset \C^n$ with defining function $r.$ Let $\mathscr{L}_k$ for $k \geq 0$ an integer, be the module, over $C^{\infty}(U)$, of vector fields generated by the tangential holomorphic vector fields to $\mathcal{N},$ their conjugates, and commutators of order less than or equal to $k$ of such vector fields.

\begin{definition} \label{def1}
A point $p \in \mathcal{N}$ is of type $m$ if $\big < \partial r(p), F(p) \big > = 0$ for all $F \in \mathscr{L}_{m-1}$ while $\big < \partial r(p), F(p) \big > \neq 0$ for some $F \in \mathscr{L}_{m}$.
\end{definition}

Here we denote the contractions between a cotangent vector and a tangent vector by $\big < , \big >$.  We shall refer to the type at a point $p$ as defined above as the \textit{Bloom-Graham type}.

\section{Catlin Multitype and the Kol\' a\v r Algorithm}
In this section, we describe the Catlin multitype and the Kol\' a\v r algorithm for the computation of the multitype at the origin  in \cite{martinIMRN}.
The notion of weights, distinguished weights, and the multitype was introduced by D. Catlin in \cite{catlinbdry}.  We will follow the notation and definitions as given by Kol\' a\v r in \cite{martinIMRN} and further describe some of the tools that M. Kol\' a\v r introduced:

\begin{definition} \label{defa}
Let $M$ be a hypersurface in $\C^{n+1},$ and let $p \in M.$ Let $\Lambda^{*}=(\mu_1,\dots,\mu_n)$ be the greatest lower bound with respect to the lexicographic ordering of all the distinguished weights at $p$. The \textit{multitype} $\mathscr{M}$ at $p$ is defined to be the n-tuple $(m_1,\dots, m_n)$, where $m_j = \infty$ if $\mu_j =0$ and $m_j= \frac{1}{\mu_j}$ if $\mu_j \neq 0$. We call the multitype $\mathscr{M}$ at $p$ finite, if the last entry $m_n< \infty$.
\end{definition}

\medskip\noindent The next theorem from \cite{catlinbdry} clarifies the relationship between the multitype and the D'Angelo type for a pseudoconvex domain:

\begin{theorem}{\normalfont (Catlin)}.\label{catlinthm}
Let $\Omega \subset \mathbb{C}^{n+1}$ be a pseudoconvex domain smooth boundary. Let $p_0\in b\Omega$ be a boundary point. If $\mathscr{M}(p_0)=(m_1,\dots, m_n)$ is the multitype at $p_0$, then for each $q=1, \dots, n,$ $m_{n+1-q} \leq \Delta_q(b\Omega, p_0),$ where $ \Delta_q(b\Omega, p_0)$ is the D'Angelo q-type at $p_0$.
\end{theorem}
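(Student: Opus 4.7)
My plan is to prove $m_{n+1-q}\leq \Delta_q(b\Omega,p_0)$ for each $q\in\{1,\dots,n\}$ by induction on $q$, working in distinguished-weight coordinates. First I would fix a distinguished weight $\Lambda^{*}=(\mu_1,\dots,\mu_n)$ realizing the multitype at $p_0$, so that $\mu_j=1/m_j$ whenever $m_j<\infty$. By Definition \ref{def4}, there exist local holomorphic coordinates $(z,w)$ centered at $p_0$ in which $b\Omega$ has the form
\[
v = P(z,\bar z) + o_{\Lambda^{*}}(1),
\]
with $P$ a $\Lambda^{*}$-homogeneous polynomial of weighted degree $1$ containing no pluriharmonic monomials. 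Pseudoconvexity of $\Omega$ forces $P$ to be plurisubharmonic, a property used in the inductive step below.

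For the base case $q=1$, where the claim reduces to $m_n\leq \Delta_1(b\Omega,p_0)$, I would test with the holomorphic curve $\gamma(t)=(0,\dots,0,t,0)$ placed along the $z_n$-axis. If $m_n=\infty$ the bound is vacuous; otherwise lex-minimality of $\Lambda^{*}$ forces $P$ to depend nontrivially on $z_n$, for otherwise one could strictly decrease $\mu_n$ and still obtain a distinguished weight. Then $\Lambda^{*}$-homogeneity guarantees a monomial $z_n^{a}\bar z_n^{b}$ in $P$ with $(a+b)\mu_n=1$, so that $\nu_0(r\circ\gamma)=a+b=m_n$ while $\nu_0(\gamma)=1$. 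For the inductive step $q>1$ I would argue by contradiction: assuming $\Delta_q<m_{n+1-q}$, D'Angelo's characterization of $\Delta_q$ via contact with $q$-dimensional complex analytic objects produces a germ $V$ through $p_0$ whose contact with $b\Omega$ is strictly below $m_{n+1-q}$. Using the Weierstrass preparation theorem to parameterize $V$ by $q$ of the coordinate variables, I would construct a $\Lambda^{*}$-superhomogeneous change of coordinates (Definition \ref{def5}) that straightens $V$ to a coordinate slice, and then combine the induction hypothesis with the low contact along $V$ to produce a strict lex-decrease of $\Lambda^{*}$, contradicting its minimality.

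The main obstacle lies in the inductive step: one must show that the coordinate change straightening $V$ can be chosen $\Lambda^{*}$-superhomogeneous, so as to preserve the distinguished-weight form, and simultaneously faithful enough to $V$ that the transformed defining function truly admits a lexicographically smaller distinguished weight. Plurisubharmonicity of $P$ enters here to rule out the cancellations and emergent pluriharmonic terms that would otherwise invalidate the weight reduction. This inductive weight-reduction scheme is the technical heart of Catlin's original argument in \cite{catlinbdry}, and I would follow his bookkeeping closely, tracking how weights transform under successive superhomogeneous changes of variables in the style already formalized in Definition \ref{def5}.
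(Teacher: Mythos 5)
This theorem is not proved in the paper at all: it is quoted verbatim from Catlin's work \cite{catlinbdry} as a known result, so there is no in-paper argument to compare yours against. Judged on its own terms, your proposal is an outline rather than a proof, and it has two concrete problems. First, in the base case $q=1$ you claim that because lex-minimality forces $P$ to depend nontrivially on $z_n$, $\Lambda^{*}$-homogeneity ``guarantees a monomial $z_n^{a}\bar z_n^{b}$ in $P$.'' That inference is false: $P$ can depend on $z_n$ only through mixed monomials such as $z_1 z_n^{a}\bar z_1\bar z_n^{b}$, in which case $P$ restricted to the $z_n$-axis vanishes identically and your computation $\nu_0(r\circ\gamma)=a+b=m_n$ has no monomial to point to. The inequality you actually need, $\nu_0(r\circ\gamma)\ge m_n$, does survive, but for a different reason: every term of $r$ restricted to the curve $\gamma(t)=(0,\dots,0,t,0)$ has ordinary order at least $m_n$, because the terms of $P$ have weighted degree exactly $1$ and the remainder is $o_{\Lambda^{*}}(1)$. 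You should argue it that way and drop the claim about a pure monomial.

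Second, and more seriously, the inductive step is not carried out. The assertions that a $q$-dimensional germ $V$ realizing $\Delta_q<m_{n+1-q}$ can be straightened to a coordinate slice by a $\Lambda^{*}$-superhomogeneous change of variables, and that this produces a strict lexicographic decrease of the distinguished weight, are precisely the difficult content of the theorem; neither is justified, and the first is problematic as stated since a variety achieving the D'Angelo $q$-type (itself defined through generic codimension-$(q-1)$ linear sections, a subtlety your sketch does not engage with) need not be smooth, let alone straightenable by a weight-compatible map. Your closing sentence, that you ``would follow [Catlin's] bookkeeping closely,'' concedes that the heart of the argument is being outsourced to the very reference being proved. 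Note also that Catlin's actual argument for this inequality is direct, not by contradiction: in $\Lambda^{*}$-adapted coordinates one exhibits the contact of suitable coordinate subspaces spanned by the lowest-weight variables and bounds $\Delta_q$ from below by the same weighted-homogeneity computation as in your base case. As it stands, your proposal establishes only the case $q=1$ (after repairing the monomial claim) and leaves the general case unproved.
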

 
\medskip For the purposes of this paper, we shall assume finite D'Angelo 1-type at any point $p \in M,$ since this assumption by Theorem \ref{catlinthm} ensures that all the entries of the multitype are finite, which is the exact setting in which the Kol\' a\v r algorithm works. 

\medskip For a weight $\Lambda$, we say the local coordinates on $M$ at $p$ are $\Lambda$-adapted if $M$ is described locally to have the form in \eqref{def8}, where $P$ is $\Lambda$-homogeneous. We shall refer to $\Lambda^{*}$-adapted coordinates as the multitype coordinates given such that $P$ is $\Lambda^*$-homogeneous.

\medskip Let $\gamma_j$, $j=1, \dots, c$, be the length of the $j$-th constant piece of the multitype weight given such that $c$ is the number of distinct entries in the multitype. Let $\sum_{i=1}^j \gamma_i = k_j$, then we have
\[
\mu_1= \cdots =\mu_{k_1} > \mu_{k_1+1} = \cdots = \mu_{k_2} > \cdots = \mu_{k_{c-1}} > \cdots = \mu_{k_{c-1}+1} = \cdots = \mu_n,
\]
where $n = k_c.$ We define a monotone sequence of weights $\Lambda_1, \dots, \Lambda_c$ which are ordered lexicographically as follows. $\Lambda_1$ is a constant n-tuple $(\mu_1, \dots, \mu_1)$ and $\Lambda_c = \Lambda^{*}$ is the multitype weight. We then define the weight $\Lambda_j = (\lambda_1^j, \dots, \lambda_n^j)$ for  $1 < j <c$, by $\lambda_i^j = \mu_i$ for $i \leq k_{j-1}$ and $\lambda_i^j = \mu_{k_{j-1}+1}$ for $i > k_{j-1}$. Note that this construction yields a finite sequence of  weights even if $\Lambda^*$ has some infinite entries.

\begin{definition} \label{defb}
Fix $\Lambda^{*}$-adapted local coordinates. The {\it leading polynomial} $P$ is defined as

\begin{equation} \label{eq:7}
P(z, \bar{z}) = \sum_{|(\alpha, \hat{\alpha})|_{\Lambda^{*}} = 1} C_{\alpha,\hat{\alpha}} z^{\alpha} \bar{z}^{\hat{\alpha}}.
\end{equation}  
\end{definition}

The polynomial defined in \eqref{eq:7} is exactly the polynomial that only retains the terms of weight 1. Put differently, it is a $\Lambda^{*}$-homogeneous  polynomial of weighted degree 1 with no pluriharmonic terms, where $\Lambda^{*}$ is the multitype weight. Following Kol\' a\v r in \cite{martinIMRN}, we will also denote by {\it leading polynomial} the polynomial consisting of all terms of weight 1 with respect to each intermediate weight $\Lambda_j$ in the Kol\' a\v r algorithm.

\begin{theorem} {\normalfont (Kol\' a\v r)}. \label{t1}
A biholomorphic transformation takes $\Lambda^{*}$-adapted coordinates into $\Lambda^{*}$-adapted coordinates if and only if this transformation is $\Lambda^{*}$-superhomogeneous.
\end{theorem}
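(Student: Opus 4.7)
My approach analyzes both directions through the weighted filtration on smooth germs at $0$ induced by $\Lambda^{*}$, where each $z_j, \bar z_j$ carries weight $\mu_j$ and $w, u, v$ carry weight $1$. In this filtration, an $\Lambda^{*}$-adapted defining equation uniquely splits its right-hand side into a weight-$1$ polynomial part (the leading polynomial $P$) and an $o_{\Lambda^{*}}(1)$ remainder of weight strictly greater than $1$, so the theorem is really a statement about the compatibility of the transformations in Definition \ref{def5} with this filtration.

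For the $(\Leftarrow)$ direction I would argue by direct computation. Given an $\Lambda^{*}$-superhomogeneous change $\tilde z_j = z_j + f_j$, $\tilde w = w+ g$, I would first show by induction on the weighted degree that the inverse $z_j = \tilde z_j + \tilde f_j$, $w = \tilde w + \tilde g$ is again $\Lambda^{*}$-superhomogeneous, by solving, block by block through the constant blocks of $\Lambda^{*}$, the triangular linear systems that determine the $\tilde f_j$ and $\tilde g$ one weight level at a time. I would then substitute this inverse into $v = P(z,\bar z) + o_{\Lambda^{*}}(1)$; since replacing $z_j$ by a weight-$\geq \mu_j$ expression and $w$ by a weight-$\geq 1$ expression preserves the weighted filtration, the weight-$1$ part of the transformed equation is again a $\Lambda^{*}$-homogeneous polynomial of weighted degree $1$ in $(\tilde z,\bar{\tilde z})$, while everything else lies in $o_{\Lambda^{*}}(1)$. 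A final $\Lambda^{*}$-superhomogeneous shift of $\tilde w$ absorbs any pluriharmonic weight-$1$ contributions generated by $\tilde g$, yielding the desired adapted form.

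For the $(\Rightarrow)$ direction I would argue by contradiction using the minimality of $\Lambda^{*}$ as the greatest lower bound of distinguished weights. Suppose $\Phi$ takes adapted to adapted but contains a nonzero sub-homogeneous part: some $f_j$ has a term of weighted degree $<\mu_j$, or $g$ has a term of weighted degree $<1$. Pulling back the new adapted equation $\tilde v = \tilde P(\tilde z, \bar{\tilde z}) + o_{\Lambda^{*}}(1)$ via $\Phi$ and eliminating $v$ using the old adapted equation yields
\[
P(z,\bar z) = \tilde P(z+f,\bar z + \bar f) - \text{Im}\,g(z,w) + o_{\Lambda^{*}}(1).
\]
The sub-homogeneous parts of $f$ and $g$ produce terms of weighted degree $<1$ on the right with no counterpart on the left, so the sum of their contributions at the lowest such weight $\epsilon < 1$ must vanish identically. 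From the structure of this identity one extracts a new weight $\Lambda^{\prime}$ on the $z_j$'s that is lexicographically strictly smaller than $\Lambda^{*}$ and still distinguished, contradicting the infimum property characterizing $\Lambda^{*}$.

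The main obstacle will be the backward direction, specifically the extraction of the smaller distinguished weight $\Lambda^{\prime}$ from the sub-homogeneous data. This requires identifying which $\mu_j$'s are actually lowered, verifying that the resulting weight still satisfies the integrality condition of Definition \ref{def2}, and exhibiting coordinates in which $M$ is $\Lambda^{\prime}$-adapted. I expect the argument to proceed by induction on the constant blocks of $\Lambda^{*}$, peeling off the lowest-weight sub-homogeneous layer block by block and showing that its presence would realize a strictly smaller distinguished weight than $\Lambda^{*}$.
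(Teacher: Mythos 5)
First, a point of reference: the paper offers no proof of this statement. It is quoted as Kol\'a\v r's theorem from \cite{martinIMRN} and used as a black box in the description of the algorithm, so there is no in-paper argument to compare yours against; I can only assess your proposal on its own terms. Your $(\Leftarrow)$ direction is sound in outline: inverting a superhomogeneous map level by level through the weighted filtration and checking that substitution preserves the splitting into a weight-$1$ part plus $o_{\Lambda^{*}}(1)$ is the standard computation, and absorbing pluriharmonic weight-$1$ terms by a further shift of $w$ is legitimate.

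The $(\Rightarrow)$ direction, however, contains two genuine gaps. First, your displayed identity writes the transformed remainder as $o_{\Lambda^{*}}(1)$, but this is precisely what fails when $\Phi$ is not superhomogeneous: if some $f_j$ contains a monomial of weighted degree $<\mu_j$ (say $f_1=z_2$ with $\mu_2<\mu_1$), then substituting into a remainder term of weight only slightly above $1$ produces terms of weight $\le 1$. Hence the lowest-weight layer you analyze receives contributions not only from $\tilde P(z+f,\bar z+\bar f)$ and $\operatorname{Im}g$ but also from $Q\circ\Phi$, and the claim that the subhomogeneous contributions ``must vanish identically'' has to be argued against all of these sources simultaneously; as written, the bookkeeping assumes the conclusion. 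Second, the crux of the direction --- extracting from the vanishing of that lowest-weight layer a weight $\Lambda'$ that is lexicographically strictly smaller than $\Lambda^{*}$, verifying condition (ii) of Definition \ref{def2} for $\Lambda'$, and actually exhibiting coordinates in which $M$ is $\Lambda'$-adapted with a nontrivial leading polynomial free of pluriharmonic terms --- is deferred entirely (``I expect the argument to proceed by induction\dots''). That step is not a routine verification; it is essentially the whole content of the implication, and without it the contradiction with the infimum property of $\Lambda^{*}$ is not reached. As it stands the proposal is a plausible plan rather than a proof; to complete it you would need to carry out the block-by-block elimination you allude to, or else simply cite \cite{martinIMRN}, where the result is established.
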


We will now describe the Kol\' a\v r algorithm for the computation of the multitype and apply this theorem under the assumption that all entries of the multitype are finite.

\vspace{0.5cm}

\noindent{\textbf{The Kol\' a\v r Algorithm:} We start by considering local holomorphic coordinates in which the leading polynomial in the variables $z$ and $\bar{z}$ contains no pluriharmonic terms. The degree of the lowest order monomial in this polynomial is then equal to the Bloom-Graham type of $M$ at $p$ as defined in \cite{bloomgraham}. This gives the first multitype component $m_1;$ see \cite{catlinbdry}. By our assumption $1 < m_1 < +\infty.$ Let $m_1 = \frac{1}{\mu_1}$ and set $\Lambda_1 = (\mu_1, \dots, \mu_1)$. We then consider all $\Lambda_1$-homogeneous transformations and choose one that will make the leading polynomial $P_1$ to be independent of the largest number of variables. We denote this number by $d_1.$ So for such coordinates we get  the defining function of $M$ to be of the form 
\[v = P_1(z_1, \dots, z_{n-d_1}, \bar{z}_1, \dots, \bar{z}_{n-d_1}) + Q_1(z, \bar{z}) + o(u),\]
where $P_1$ is $\Lambda_1$-homogeneous of weighted degree 1 and $Q_1$ is $o_{\Lambda_1}(1)$. We use the result that any weight $\Lambda$ which is smaller than $\Lambda_1$ with respect to the lexicographic ordering and the fact that $\Lambda_1$-homogeneous transformation are linear, to conclude that $\Lambda$-adapted coordinates are also $\Lambda_1$-adapted coordinates. We thus have that $\mu_1 = \cdots = \mu_{n-d_1}$ and $\mu_{n-d_1+1} < \mu_1$. We define the following important tools:

$$\Theta_1 = \left\{(\alpha, \hat{\alpha})| \ C_{\alpha, \hat{\alpha}}^1 \neq 0 \:\: \text{and} \:\: \sum_{i=1}^{n-d_1} (\alpha_i + \hat{\alpha}_i) \mu_i < 1 \right\},$$
and also

\[Q_1(z, \bar{z}) = \sum_{|(\alpha, \hat{\alpha})|_{\Lambda_1} > 1} C_{\alpha,\hat{\alpha}}^1 z^{\alpha} \bar{z}^{\hat{\alpha}}.\]
For every $(\gamma, \hat{\gamma}) \in \Theta_1$,

\begin{equation} \label{eq:8}
W_1(\gamma, \hat{\gamma}) = \frac{1 - \sum_{i=1}^{n-d_1}(\gamma_i + \hat{\gamma}_i) \mu_i}{\sum_{i=n-d_1+1}^{n}(\gamma_i + \hat{\gamma}_i)}.
\end{equation}
We define the next weight $\Lambda_2$ by letting 

\[\lambda_j^2 = \max_{(\alpha, \hat{\alpha}) \in \Theta_1} \text{W}_1(\alpha,\hat{\alpha})\] 
for $j > n-d_1$ and $\lambda_j^2= \mu_1$ for $j \leq n-d_1$. We then complete the second by letting $P_2$ be the new leading polynomial corresponding to the weight $\Lambda_2$. $P_2$ depends on more than $n-d_1$ variables.

We proceed by induction. At the $j$-th step, for $j>2,$ using coordinates from the previous step, we consider all $\Lambda_{j-1}$-homogeneous transformations and choose one that makes the leading polynomial $P_{j-1}$ to be independent of the largest number of variables. We fix such coordinates, and let $d_{j-1}$ be the largest number of variables, which do not show up in $P_{j-1}$ after this change of variables. By Theorem \ref{t1}, the transformations taking $\Lambda_{j-1}$-adapted coordinates into $\Lambda_{j-1}$-adapted coordinates are always $\Lambda_{j-1}$-superhomogeneous. The number of multitype entries that are added at each step of the computation depends on the difference $(d_{j-2} - d_{j-1})$. Hence we consider two cases in this step:

\begin{itemize}
\item[CASE \Large 1:] Assume that $d_{j-2} > d_{j-1}$. Also recall that for any weight $\Lambda$ that is smaller than $\Lambda_{j-1}$ with respect to the lexicographic ordering, $\Lambda$-adapted coordinates are also $\Lambda_{j-1}$-adapted. This implies that we get $(d_{j-2} - d_{j-1})$ multitype entries
\[
\mu_{n-d_{j-2}+1} = \cdots = \mu_{n-d_{j-1}} = \lambda_{n- d_{j-2}+1}^{j-1}
\]
and let $\lambda_i^j=\mu_i$ for $i \leq n - d_{j-1}- 1$. To obtain $\lambda_i^j$ for $j>n-d_{j-1}-1$, we consider
\[
v = P_{j-1}(z_1, \dots, z_{n-d_{j-1}}, \bar{z}_1, \dots, \bar{z}_{n-d_{j-1}}) + Q_{j-1}(z, \bar{z}) + o(u),
\]
where $Q_{j-1}$ is $o_{\Lambda_{j-1}}(1)$ and $P_{j-1}$ is $\Lambda_{j-1}$-homogeneous of weighted degree 1. We define $Q_{j-1}$, $\Theta_{j-1}$, and $W_{j-1}$ in a similar way as in step two. Thus,

$$\Theta_{j-1} = \left\{(\alpha, \hat{\alpha})| \ C_{\alpha, \hat{\alpha}}^{j-1} \neq 0 \:\: \text{and} \:\: \sum_{i=1}^{n-d_{j-1}} (\alpha_i + \hat{\alpha}_i) \mu_i < 1 \right\},$$
and also

\[Q_{j-1}(z, \bar{z}) = \sum_{|(\alpha, \hat{\alpha})|_{\Lambda_{j-1}} > 1} C_{\alpha,\hat{\alpha}}^{j-1} z^{\alpha} \bar{z}^{\hat{\alpha}}.\]
For every $(\gamma, \hat{\gamma}) \in \Theta_{j-1}$,

\begin{equation} \label{eq:9}
W_{j-1}(\gamma, \hat{\gamma}) = \frac{1 - \sum_{i=1}^{n-d_{j-1}}(\gamma_i + \hat{\gamma}_i) \mu_i}{\sum_{i=n-d_{j-1}+1}^{n}(\gamma_i + \hat{\gamma}_i)}.
\end{equation}
So for the remaining multitype entries of $\Lambda_j$ we let

\[\lambda_i^j = \max_{(\alpha, \hat{\alpha}) \in \Theta_{j-1}} W_{j-1}(\alpha,\hat{\alpha}),\] 
for $i > n-d_{j-1}$. 

\item[CASE \Large 2:] Assume that $d_{j-1} = d_{j-2}$. There are zero multitype entries computed in this case and so we only determine $\lambda_i^j$ for $j > n-d_{j-1}$ using \eqref{eq:9}. This completes the $j$-th step of the computation. 
\end{itemize}

The process terminates after a finite number of steps to give all the entries of the multitype weight $\Lambda^{*}$. It is clear that case 1 advances the process. We just need to show that the number of times case 2 occurs where no multitype entries are determined can only happen finitely many times. We claim case 2 can take place at most $\lceil \frac{1}{\mu_n} \rceil^{n - d_{j-1} + 1}$ times, where $\lceil \frac{1}{\mu_n} \rceil$ is the ceiling for the rational number $\frac{1}{\mu_n}.$ Indeed, it comes down to the number of different values that \eqref{eq:9} can have. The upper bound for the numerator is given by $\lceil \frac{1}{\mu_n} \rceil^{n - d_{j-1}}$ as the $\mu_i$ entries are decreasing, whereas the upper bound for the denominator is given by  $\lceil \frac{1}{\mu_n} \rceil.$

\vspace{0.5cm}

\begin{example}\label{K1}
Let $M \subset \C^4$ be a real hypersurface near the origin given by $r =0$ with

$$r = 2\,\text{Re}(z_4 ) + |z_1-z_2+z_3^2|^2 + |z_1^2-z_2^2|^2.$$ 

\vspace{0.3cm}\noindent Using the Kol\' a\v r algorithm, we proceed as follows: The Bloom-Graham type is 2, which implies that $\mu_1 = \frac{1}{2}$ \ and \ $\Lambda_1 = (\frac{1}{2}, \frac{1}{2}, \frac{1}{2})$. Thus, $P_1=|z_1-z_2|^2.$ We consider all $\Lambda_1$-homogeneous transformation and choose
\[
\tilde{z}_1 = z_1 - z_2, \quad  \text{and} \quad \tilde{z}_j = z_j,
\]
for $j = 2, 3, 4,$ to obtain a leading polynomial $P_1$ independent of the variables $z_2$ and $z_3$. We write $r$ in the new variables and ignore $\sim$ when no confusion arises. Then $P_1 = |z_1|^2$ with $d_1 = 2.$ The leftover polynomial is
\[
Q_1 = |z_3^2|^2 + 2\text{Re}(z_1 \bar{z}_3^2) + |z_1^2|^2 + 4\text{Re}(z_1^2 \bar{z}_1 \bar{z}_2) +4|z_1z_2|^2.
\]
$\Lambda_2 = (\frac{1}{2}, \frac{1}{4}, \frac{1}{4}),$ where the maximum number $\max(W_1) = \frac{1}{4}$.  We consider all $\Lambda_2$-homogeneous transformations and choose
\[
\tilde{z}_1 = z_1 + z_3^2  \ \ \:\: \text{and} \:\: \ \ \tilde{z}_j = z_j  
\]
for $j = 2,3,4,$ which makes the leading polynomial independent of the variables $z_2$ and $z_3$. In the new variables, $P_2 = |z_1|^2$ with  $d_2 = 2$ and  
\[
\begin{split}
Q_2 = & |z_1^2|^2 + 4|z_1z_3^2|^2 + 4\text{Re}(z_1^2 \bar{z}_1\bar{z}_3^2) + |z_3^4|^2 + 2\text{Re}(z_1^2 \bar{z}_3^4) + 4\text{Re}(z_1z_3^2 \bar{z}_3^4) + 4|z_1z_2|^2 + 4\text{Re}(z_1^2 \bar{z}_1\bar{z}_2) \\
  & + 8\text{Re}(z_1z_3^2 \bar{z}_1\bar{z}_2)+ 4\text{Re}(z_3^4 \bar{z}_1\bar{z}_2) + 4|z_2z_3^2|^2 - 4\text{Re}(z_1^2 \bar{z}_2\bar{z}_3^2) - 8\text{Re}(z_1z_3^2 \bar{z}_2\bar{z}_3^2) \\
 & - 4\text{Re}(z_3^4 \bar{z}_2\bar{z}_3^2)  - 8\text{Re}(z_1z_2 \bar{z}_2\bar{z}_3^2).
 \end{split}
\]
$\Lambda_3 = (\frac{1}{2}, \frac{1}{6}, \frac{1}{6})$ with $\max (W_2)= \frac{1}{6}.$ Here, no $\Lambda_3$-homogeneous transformation can make $P_3$ to be independent of any variables and so $d_3 = 0.$ Hence,
 \[
P_3 = |z_1|^2 + 4|z_2z_3^2|^2.
\]
\end{example}
  
\section{Squares of Monomials and the Multitype Entries}
We will show in this section that the set of monomials that give the maximum W-value at each step of the Kol\' a\v r algorithm always consists of both squares of moduli of monomials and cross terms. Since the entries in the multitype depend on the maximum W-values, it will suffice to establish that for each entry of the multitype, there is always a square that gives the corresponding multitype entry.

As hinted in the introduction, we rely on the Kol\' a\v r algorithm for the computation of the multitype in \cite{martinIMRN} to develop tools to study the stratification by multitype level sets of the boundary of a domain given by a sum of squares of holomorphic functions. We seek a tool that will enable us to effectively interpret our results both geometrically and algebraically. As such, we resort to reducing the problem to the study of the ideal of the holomorphic functions in the sum. For the transition to be effective, we need to establish that a natural modification of the Kol\' a\v r algorithm still holds at the level of ideals.

The ensuing lemma gives the foundational result we need in order to transition from the sums of squares case to the case of ideals of holomorphic functions in $\C^n.$ We use the result from this lemma to prove Theorems \ref{thm1} and \ref{thm2}.

\begin{lemma} \label{nl}
Let $f$ and $g$ be monomials with non-zero coefficients from the Taylor expansion of $h,$ where $h$ is a generator from a sum of squares domain in $\C^{n+1}$. Let $P_t$ for $t \geq 1$ be the leading polynomial at step $t$ of the Kol\' a\v r algorithm, and let $W_t$ be the number computed at the $(t+1)$-th step. 

\begin{itemize}
\item[A.] If $W_t(|f|^2) = W_t(|g|^2)$, then $W_t(f\bar{g})= W_t(|f|^2) = W_t(|g|^2)$.
\item[B.] If $W_t(|f|^2) < W_t(|g|^2)$, then $ W_t(|f|^2) < W_t(f\bar{g}) < W_t(|g|^2)$.
\item[C.] If $W_t(|f|^2)$ cannot be computed, then 
\begin{itemize}
\item[i.] $W_t(f\bar{g}) \leq W_t(|g|^2)$ for any monomial $g$ for which both $W_t(|g|^2)$ and $W_t(f\bar{g})$ can be computed.
\item[ii.] $W_t(f\bar{g})$ cannot be computed for any monomial $g$ for which $W_t(|g|^2)$ cannot be computed.
\end{itemize}
\item[D.] If $f$ is such that $|f|^2$ is in the leading polynomial $P_t$, then
\begin{itemize}
\item[i.] For any monomial $g$ for which $W_t(|g|^2)$ can be computed, $W_t(f\bar{g})= W_t(|g|^2)$.
\item[ii.] For any monomial $g$ for which $W_t(|g|^2)$ cannot be computed, $W_t(f\bar{g})$ cannot be computed as well.
\end{itemize}
\end{itemize}
Here $W_t(f):= W_t(\alpha,\hat{\alpha})$ where $(\alpha, \hat{\alpha})$ is the pair of multiindices corresponding to the monomial $f$.
\end{lemma}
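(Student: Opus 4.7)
The plan is to reduce each of the eight sub-assertions in (A)--(D) to a one-line algebraic identity for the Kolář $W$-formula, with the mediant-of-fractions operation playing the central role. Writing $f = c_f z^\alpha$, $g = c_g z^\beta$, and introducing the shorthand
\[
A = \sum_{i=1}^{n-d_t} \alpha_i \mu_i, \qquad A' = \sum_{i=1}^{n-d_t} \beta_i \mu_i, \qquad B = \sum_{i=n-d_t+1}^{n} \alpha_i, \qquad B' = \sum_{i=n-d_t+1}^{n} \beta_i,
\]
the three quantities in play become
\[
W_t(|f|^2) = \frac{1-2A}{2B}, \qquad W_t(|g|^2) = \frac{1-2A'}{2B'}, \qquad W_t(f\bar g) = \frac{1-A-A'}{B+B'}.
\]
The driving identity is
\[
W_t(f\bar g) \;=\; \frac{(1-2A)+(1-2A')}{2B+2B'},
\]
which exhibits $W_t(f\bar g)$ as the Farey mediant of $W_t(|f|^2)$ and $W_t(|g|^2)$. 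Parts (A) and (B) are then immediate from the elementary mediant facts: the mediant of two equal fractions with positive denominators equals their common value, and the mediant of two unequal fractions lies strictly between them. The hypothesis in (A) and (B) that both squared $W$-values are computable supplies $B, B' > 0$, which is exactly what validates these facts.

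Before attacking (C) and (D), I would pin down what ``cannot be computed'' means in terms of $A$ and $B$. Because $|f|^2$ is non-pluriharmonic and appears with positive coefficient $\sum_j |c_j^\alpha|^2$ in the Taylor expansion of $r$ in $\Lambda_t$-adapted coordinates, its weighted degree must satisfy $2|f|_{\Lambda_t} \geq 1$; equality forces $|f|^2 \in P_t$, i.e.\ $B = 0$ and $2A = 1$. Thus $W_t(|f|^2)$ is computable precisely when $2A < 1$ and $B > 0$, the borderline $B = 0$, $2A = 1$ is the case (D) hypothesis, and every other failure mode is either a zero denominator ($B = 0$, $2A > 1$) or a non-positive numerator excluding $(\alpha,\alpha)$ from $\Theta_t$ ($B > 0$, $2A \geq 1$).

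For part (D), direct substitution of $2A = 1$ and $B = 0$ into the cross-term formula gives
\[
W_t(f\bar g) \;=\; \frac{1 - \tfrac12 - A'}{B'} \;=\; \frac{1-2A'}{2B'} \;=\; W_t(|g|^2),
\]
which is (D)(i); and (D)(ii) follows because any obstruction to computing $W_t(|g|^2)$, namely $B' = 0$ or $2A' \geq 1$, translates into $B + B' = 0$ or numerator $\tfrac12 - A' \leq 0$. For part (C) I split on which failure mode affects $|f|^2$: when $B > 0$ and $2A \geq 1$, one has $W_t(|f|^2) \leq 0 < W_t(|g|^2)$ and the mediant inequality sandwiches $W_t(f\bar g)$ strictly below $W_t(|g|^2)$; when $B = 0$, the inequality $W_t(f\bar g) \leq W_t(|g|^2)$ reduces, after clearing denominators, to $2A \geq 1$, which the weighted-degree constraint guarantees. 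The claim (C)(ii) is handled by a finite check that each of the four combinations of failure modes for $|f|^2$ and $|g|^2$ forces either $B + B' = 0$ or $A + A' \geq 1$, both of which defeat the cross-term formula.

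The main obstacle I anticipate is not analytic but organizational: the statement of the lemma conflates two distinct failure modes (zero denominator versus membership in $\Theta_t$), and separating them from the outset via the weighted-degree constraint $2|f|_{\Lambda_t} \geq 1$ is what makes the eight sub-assertions collapse uniformly to the single mediant identity.
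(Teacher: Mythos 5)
Your proposal is correct and follows essentially the same route as the paper: your mediant identity is precisely the paper's weighted-average formula $W_t(f\bar{g}) = \bigl(|\beta|\,W_t(|f|^2)+|\tau|\,W_t(|g|^2)\bigr)/\bigl(|\beta|+|\tau|\bigr)$, and your classification of the failure modes via the constraint that every monomial of $r$ has weighted degree at least $1$ with respect to $\Lambda_t$ mirrors the paper's argument that a monomial supported only on the variables of $P_t$ has non-positive numerator. Your explicit separation of the zero-denominator case from the non-positive-numerator case is a slight tidying of the paper's Remark following the lemma, but the substance of all eight sub-assertions is handled identically.
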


\begin{remark} \label{rm1} 
We shall say the quantity $W_t(f)$ ``\textit{cannot be computed}'' if the pair of multiindices $(\alpha, \hat{\alpha})$ corresponding to the monomial $f$ is not an element of $\Theta_t.$ In other words, $W_t(f)$ cannot be computed if the numerator of the fraction giving $W_t(f)$ is not positive; see \eqref{nr0} below.

\end{remark}

\begin{remark} \label{rm2}
 We note here that $W_t(f\bar{g})$ and $W_t(\bar{f} g)$ corresponding to the cross terms $f \bar g$ and $\bar f g$ respectively are equal.
\end{remark}

\begin{proof}
Let $z_1, \dots, z_c$ be the variables in the leading polynomial $P_t$, and let the weight $\Lambda_t = (\mu_1, \dots,\mu_c, \mu_{c+1}, \dots,\mu_n)$. 
We begin by recalling that 
\begin{equation} \label{nr0}
W_t(\alpha, \hat{\alpha}) = \frac{1 - \sum_{i=1}^{c}(\alpha_i+\hat{\alpha}_i)\mu_i}{\sum_{i=c +1}^n(\alpha_i+\hat{\alpha}_i)},
\end{equation} 
where $(\alpha, \hat{\alpha}) = (\alpha_1, \dots, \alpha_n, \hat{\alpha}_1,\dots, \hat{\alpha}_n)$ is the multiindex of the monomial whose $W_t$ is being computed.

Let $\Gamma_1$ be the set of all non-zero monomials that consist of only variables not in $P_t$, let $\Gamma_2$ be the set of all non-zero monomials which consist of variables both in $P_t$ as well as variables not in $P_t$, and let $\Gamma_3$  be the set of all non-zero monomials which consist of only variables in $P_t$. If $W_t(|f|^2)$ can be computed, then $f \in \Gamma_1$ or $f \in \Gamma_2$ only. We will now prove $f$ cannot belong to $\Gamma_3.$ We assume the opposite, namely that $f \in \Gamma_3$ and that $W_t(|f|^2)$ can be computed. The monomial $|f|^2$ has weight $\geq 1$ with respect to $\Lambda_t,$ and since $f \in \Gamma_3,$ it follows that the numerator of $W_t(|f|^2)$ is $\leq 0.$ Therefore, $W_t(|f|^2)$ cannot be computed, which is a contradiction.  Without loss of generality, we now specify to $f \in \Gamma_1$  or $f \in \Gamma_2$  for all parts of the lemma pertaining to the case when $W_t(|f|^2)$ can be computed (and similarly for $g$).

Since $f \in \Gamma_1$  or $f \in \Gamma_2$ , we can write $f = f_1\, f_2$ where $f_1$ and $f_2$ are monomials satisfying $f_1 \in$ $\Gamma_3$ and $f_2 \in\Gamma_1$. Let $f_1 = z_1^{\alpha_1} \cdots z_c^{\alpha_c},$ where $z_1, \dots, z_c$ are the variables in the leading polynomial $P_t$, and $f_2= C_f z_{c+1}^{\beta_{c+1}} \cdots z_n^{\beta_n}$ for $C_f \in \C.$ If $W_t(|f|^2)$ can be computed, by our definition of $f$, the multiindices $\alpha$ and $\beta$ corresponding to monomials $f_1$ and $f_2$ respectively must satisfy $|\alpha| \geq 0$ and $|\beta| > 0$. If $|\alpha| =0$, then $f \in \Gamma_1,$ and if $|\alpha| >0$, then $f \in$ $\Gamma_2.$ Here $|\alpha|= \alpha_1 + \cdots + \alpha_c$ as the rest of the entries are zero, and $|\beta|=\beta_{c+1}+\dots + \beta_n$ for the same reason. Now, $|f|^2 = f_1\bar{f}_1 f_2\bar{f}_2$ with $|\alpha|=|\hat{\alpha}|$ and $|\beta| = |\hat{\beta}|$. Hence
\begin{equation} \label{nr1}
W_t(|f|^2) = \frac{1 - \sum_{i=1}^{c}(\alpha_i+\hat{\alpha}_i)\mu_i}{|\beta| + |\hat{\beta}|} = \frac{\frac{1}{2} - \sum_{i=1}^c \alpha_i\mu_i}{|\beta|}. 
\end{equation}

Similarly, let $g = g_1g_2$, where $g_1 \in \Gamma_3$ and $g_2 \in \Gamma_1.$ Let $\gamma$ and $\tau$ be the multiindices corresponding to monomials $g_1$ and $g_2$ respectively. Here $g_1 = z_1^{\gamma_1} \cdots z_c^{\gamma_c}$ and $g_2= C_g z_{c+1}^{\tau_{c+1}} \cdots z_n^{\tau_n}$ for $C_g \in \C.$ By a similar computation as carried out above for $f,$
\begin{equation} \label{nr2}
W_t(|g|^2) = \frac{\frac{1}{2} - \sum_{i=1}^c\gamma_i\mu_i}{|\tau|}.
\end{equation}

\begin{itemize}
\item[A.] Suppose that $W_t(|f|^2) = W_t(|g|^2)$ and consider

\begin{equation} \label{nr3}
\begin{split}
W_t(f\bar{g}) & = \frac{1 - \sum_{i=1}^{c}\alpha_i \mu_i - \sum_{i=1}^c \hat{\gamma}_i \mu_i}{|\beta| + |\tau|} \\
 & = \frac{\frac{1}{2} - \sum_{i=1}^{c}\alpha_i \mu_i + \frac{1}{2} - \sum_{i=1}^c \gamma_i\mu_i}{|\beta| + |\tau|}\\
 & = \frac{|\beta| W_t(|f|^2)+ |\tau| W_t(|g|^2)}{|\beta| + |\tau|} \ \quad \text{from \eqref{nr1} and \eqref{nr2} by cross multiplication.}\\
 & = W_t(|f|^2) = W_t(|g|^2) \ \quad \text{by our hypothesis.}
\end{split} 
\end{equation}

\item[B.] Suppose that $W_t(|f|^2) < W_t(|g|^2)$. Then from (A.) we get that 
\begin{equation} \label{nr4}
\begin{split}
W_t(f\bar{g}) & = \frac{\frac{1}{2} - \sum_{i=1}^{c}\alpha_i \mu_i + \frac{1}{2} - \sum_{i=1}^c \gamma_i\mu_i}{|\beta| + |\tau|}\\
 & = \frac{|\beta| W_t(|f|^2)+ |\tau| W_t(|g|^2)}{|\beta| + |\tau|} \ \ \ \ \text{from \eqref{nr1} and \eqref{nr2}.}\\
 & < \frac{|\beta| W_t(|g|^2)+ |\tau| W_t(|g|^2)}{|\beta| + |\tau|} \ \ \ \ \text{since} \ \ W_t(|f|^2) < W_t(|g|^2)\\
 & = W_t(|g|^2) 
\end{split} 
\end{equation}
Again
\begin{equation} \label{nr5}
\begin{split}
W_t(f\bar{g}) & = \frac{|\beta| W_t(|f|^2)+ |\tau| W_t(|g|^2)}{|\beta| + |\tau|}\\
 & > W_t(|f|^2)
\end{split} 
\end{equation}
since $W_t(|f|^2) < W_t(|g|^2)$. Thus from \eqref{nr4} and \eqref{nr5} we obtain $$ W_t(|f|^2) < W_t(f\bar{g}) < W_t(|g|^2).$$

\item[C.]i. Suppose that $W_t(|f|^2)$ cannot be computed. Then clearly $f \notin \Gamma_1.$ We know that $f = f_1f_2$ and that $|\alpha| > 0$ and $|\beta| \geq 0$. If $W_t(|f|^2)$ cannot be computed, then we get that $\sum_{i=1}^c (\alpha_i + \hat{\alpha}_i)\mu_i \geq 1.$ Thus $\sum_{i=1}^c \alpha_i \mu_i \geq 1/2$ since $\alpha_i = \hat{\alpha}_i$ for all $i, \ 1 \leq i \leq c.$

Let $g = g_1g_2$, where $g_1 \in \Gamma_3$ and $g_2 \in \Gamma_1.$ $W_t(|g|^2)$ and $W_t(f\bar{g})$ can be computed, and so $g$ cannot belong to $\Gamma_3,$ which implies the multiindex corresponding to $g_2$ must satisfy $|\tau| >0$. Therefore,
\begin{equation} \label{nr6}
\begin{split}
W_t(f\bar{g}) & = \frac{\frac{1}{2} - \sum_{i=1}^{c}\alpha_i \mu_i + \frac{1}{2} - \sum_{i=1}^c \gamma_i\mu_i}{|\beta| + |\tau|}\\
 & = \frac{\frac{1}{2} - \sum_{i=1}^{c}\alpha_i \mu_i + |\tau| \, W_t(|g|^2)}{|\beta| + |\tau|} \ \quad \text{from \eqref{nr2}}\\
 & \leq \frac{|\tau|}{|\beta| + |\tau|} W_t(|g|^2) \ \quad \text{since} \ \frac{1}{2} - \sum_{i=1}^c \alpha_i \mu_i \leq 0\\
 & \leq W_t(|g|^2) \ \quad \text{since} \ \frac{|\tau|}{|\beta| + |\tau|} \leq 1.
\end{split} 
\end{equation}

\item[ii.] From (C.i.) we know that $f = f_1f_2 \notin \Gamma_1$ and also that $\sum_{i=1}^c \alpha_i \mu_i \geq1/2$ since $\alpha_i = \hat{\alpha}_i$ for all $i, \ 1 \leq i \leq c.$ Similarly, for $g = g_1g_2$ it means that $g \notin \Gamma_1$ and that $\sum_{i=1}^c \gamma_i \mu_i \geq1/2$. Thus $\sum_{i=1}^c (\alpha_i + \gamma_i)\mu_i \geq1$, and so the pair of multiindices corresponding to the cross term $f\bar{g}$ is not in the set $\Theta_t$. Hence the number $W_t(f\bar{g})$ cannot be computed.

\item[D.]i. Suppose that $f$ is such that $|f|^2$ is in the leading polynomial $P_t$. Then $f \in \Gamma_3,$ which implies that $f = f_1f_2$ with $|\beta| =0$, that is, $f = C f_1$ for $C$ a non-zero constant. Let $g = g_1g_2$ be any monomial such that $W_t(|g|^2)$ can be computed. Clearly, $g \notin \Gamma_3,$ and so $|\tau| >0$ whereas $|\gamma| \geq 0.$ We know from \eqref{nr2} that 
\[W_t(|g|^2) = \frac{\frac{1}{2} - \sum_{i=1}^c\gamma_i\mu_i}{|\tau|}\]
Given that $|f|^2$ is a term in $P_t$, $\sum_{i=1}^c (\alpha_i+ \hat{\alpha}_i)\mu_i = 1$ since $P_t$ is a $\Lambda_t$-homogeneous polynomial of weighted degree 1. Thus $\sum_{i=1}^c \alpha_i\mu_i = 1/2$ since $\alpha_i = \hat{\alpha}_i$ for all $i, \ 1 \leq i \leq c.$
Now 
\begin{equation} \label{nr7}
\begin{split}
W_t(f\bar{g}) & = \frac{\frac{1}{2} - \sum_{i=1}^{c}\alpha_i \mu_i + \frac{1}{2} - \sum_{i=1}^c \gamma_i\mu_i}{|\beta| + |\tau|}\\
 & = \frac{\frac{1}{2} - \sum_{i=1}^{c}\alpha_i \mu_i + \frac{1}{2} - \sum_{i=1}^c \gamma_i\mu_i}{|\tau|} \ \quad \text{since} \ |\beta| =0\\
 & = \frac{\frac{1}{2} - \sum_{i=1}^{c}\gamma_i \mu_i}{|\tau|} \ \quad \text{since} \ \sum_{i=1}^c \alpha_i \mu_i= \frac{1}{2}\\
 & = W_t(|g|^2)
\end{split} 
\end{equation}

\item[ii.] Let $g = g_1g_2$ be any monomial such that $W_t(|g|^2)$ cannot be computed. From (D.i.) we know that $\sum_{i=1}^c \alpha_i\mu_i = 1/2$ and from (C.ii.) we know that $\sum_{i=1}^c \gamma_i\mu_i \geq 1/2$. Thus 
\[\sum_{i=1}^c (\alpha_i + \hat{\gamma}_i)\mu_i = \sum_{i=1}^c (\alpha_i + \gamma_i)\mu_i = \frac{1}{2} + \sum_{i=1}^c \gamma_i \mu_i \geq 1. \]
Hence the pair of multiindices corresponding to the cross term $f\bar{g}$ does not belong to the set $\Theta_t$, and so the number $W_t(f\bar{g})$ cannot be computed.
\end{itemize}

\end{proof}

\begin{remark}
The terms that are added to the leading polynomial $P_t$ at step $t$ of the Kol\' a\v r algorithm are the terms needed to compute the multitype. Clearly, the maximum $W_t$ at every step is always assumed by a square of a monomial. All the cross terms that are added to the leading polynomial could be eliminated without changing the multitype. Most importantly, the multitype of $M$ at the origin will not change if:
\begin{itemize}
 \item[i.] All the cross terms are eliminated from the expansion of $|h|^2,$ where $h$ is a generator of a sum of squares domain and
 \item[ii.] All the squares that do not appear in the final leading polynomial are added to our data.
 \end{itemize}
\end{remark}

\section{Proofs of the Main Results}
We give the proofs of the two main theorems in this section. As hinted in the introduction, we shall use the result from Lemma \ref{nl} to prove Theorems \ref{thm1} and \ref{thm2}. By our convention in this paper, we assume without loss of generality that the domains we work with are decoupled as mentioned in the introduction. We start by proving a more general result than Theorem \ref{thm1}.

\begin{theorem} \label{thm5}
Let $0 \in M \subset \mathbb{C}^{n+1}$ be a hypersurface whose defining function is given as
\[r(z) = \text{2Re}(z_{n+1}) + \sum_{j=1}^{N}|f_j(z_1, \dots, z_n)|^2,\]
where $f_1, \dots, f_N$ are holomorphic functions near the origin and assume that the D'Angelo 1-type of $M$ at the origin is finite. Then the leading polynomial $P_t(z_1,\dots, z_{n-d_t},\bar{z}_1,\dots, \bar{z}_{n-d_t})$ obtained at step $t$ of the Kol\' a\v r algorithm, for $t \geq 1,$ is a sum of squares of holomorphic polynomials. In particular, the final leading polynomial $P(z_1,\dots, z_n,\bar{z}_1,\dots, \bar{z}_n)$ corresponding to the multitype weight is likewise a sum of squares.
\end{theorem}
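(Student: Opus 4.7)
My plan is to establish the stronger statement that at every step $t \geq 1$ one has $P_t = \sum_{j=1}^N |h_j^{(t)}|^2$, where $h_j^{(t)}$ is the subsum of monomials of $f_j^{(t)}$ (the transform of $f_j$ under all coordinate changes performed up through step $t$) that have $\Lambda_t$-weight exactly $1/2$ and involve only the variables $z_1, \dots, z_{n-d_t}$. The first observation is that the changes of variables used in the algorithm can always be chosen so as to preserve the sum-of-squares form $r = 2\,\text{Re}(z_{n+1}) + \sum_j |f_j^{(t)}|^2$ with each $f_j^{(t)}$ holomorphic in $z_1, \dots, z_n$ alone: the tangential component $\tilde z_j = z_j + f_j(z,w)$ of any $\Lambda_t$-homogeneous transformation has $f_j$ of weight $\leq 1/2$, which forces $f_j$ to be independent of $w$ (since $w$ has weight $1$), and any pluriharmonic contribution arising from a non-trivial transformation of $w$ can be reabsorbed into a redefinition of $z_{n+1}$.

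The base case $t=1$ is immediate: the Bloom-Graham type satisfies $m_1 = 2m$ where $m = \min_j \text{ord}(f_j)$, since the homogeneous degree-$m$ components of the $f_j$'s cannot cancel in the non-negative expression $\sum_j |f_j^{(m)}|^2$. Hence $P_1 = \sum_j |f_j^{(m)}|^2$ before any change of coordinates, and the $\Lambda_1$-homogeneous (thus linear) transformation applied at step $1$ preserves the sum-of-squares structure.

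For the inductive step, the crucial fact is that since the coordinates at step $t$ are $\Lambda_t$-adapted, every non-pluriharmonic monomial in the Taylor expansion of $r$ has $\Lambda_t$-weight at least $1$. Because $\sum_j |f_j^{(t)}|^2$ contains no pluriharmonic terms at all, every monomial $\phi$ appearing in any $f_j^{(t)}$ has diagonal contribution $|\phi|^2$ of weight $\geq 1$, hence $\phi$ itself has $\Lambda_t$-weight $\geq 1/2$. For any term $c\,\phi\bar\psi$ occurring in $P_t$, both $\phi$ and $\psi$ are monomials of a common $f_j^{(t)}$ involving only $z_1, \dots, z_{n-d_t}$, with weights summing to $1$; since both are at least $1/2$, equality must hold throughout, so both $\phi$ and $\psi$ lie in $h_j^{(t)}$. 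Summing over all such contributions gives $P_t = \sum_j |h_j^{(t)}|^2$, a sum of squares of holomorphic polynomials.

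The main obstacle I anticipate is justifying the structural prerequisites rigorously: that the coordinate changes can be chosen to remain within the tangential variables (thereby preserving the sum-of-squares form), and that the resulting coordinates inherit the $\Lambda_t$-adapted property at every step. The former rests on a careful weight accounting via Definition \ref{def5} together with the option of reabsorbing pluriharmonic terms into $z_{n+1}$, while the latter is built into the algorithm via Theorem \ref{t1}. Lemma \ref{nl} supplies the complementary viewpoint at the level of the $W_t$-values, guaranteeing that any cross term $\phi\bar\psi$ entering a subsequent leading polynomial comes alongside its diagonal counterparts $|\phi|^2$ and $|\psi|^2$, which is precisely what the sum-of-squares structure derived above predicts.
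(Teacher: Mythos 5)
Your proof is correct, and it takes a genuinely different route from the paper's. The paper argues by induction on the steps of the algorithm, using Lemma \ref{nl} to track, for each generator $f_k$, exactly which squares $|f_{k,i}|^2$ and which cross terms $2\,\text{Re}(f_{k,j}\bar f_{k,i})$ attain the maximal $W_t$-value at each step, and then verifying by a count of cross terms that the newly added terms complete $P_{t,k}$ to a perfect square $|f_{k,b_1}+\cdots+f_{k,b_{v_k}}+f_{k,a_1}+\cdots+f_{k,a_{s_k}}|^2$. You instead identify $P_t$ directly as the $\Lambda_t$-weight-one graded piece of $\sum_j|f_j^{(t)}|^2$ and observe that, since every monomial $\phi$ of every $f_j^{(t)}$ has weight $\geq 1/2$, additivity of weights forces the weight-one piece of each $|f_j^{(t)}|^2$ to be exactly $|h_j^{(t)}|^2$, with $h_j^{(t)}$ the weight-$\tfrac12$ piece of $f_j^{(t)}$. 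This is shorter, bypasses the case analysis of Lemma \ref{nl} entirely, and yields Corollary \ref{C1} (the generators of $\mathcal{I}_{P_\ell}$ are precisely the monomials of weighted degree $\tfrac12$) as an immediate byproduct; the paper's finer bookkeeping is not wasted, though, since Lemma \ref{nl} is reused in the proof of Theorem \ref{thm2}, where one needs to know how cross terms behave relative to squares under the $W_t$-computation, information your graded argument does not supply. The two prerequisites you flag are indeed the ones the paper also relies on: the transformations may be taken purely tangential with $g=0$ because a sum of squares of functions vanishing at the origin has no pluriharmonic terms to absorb, and the step-$t$ coordinates are $\Lambda_t$-adapted by the very construction of $\Lambda_t$ as a maximum of $W_{t-1}$-values. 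The one point I would ask you to spell out in the inductive step, and not only in the base case, is the non-cancellation of the diagonal coefficients: the coefficient of the bidegree of $|\phi|^2$ in $r$ is $\sum_j|c_{j,\phi}|^2>0$ and cannot be met by any cross term (those have off-diagonal bidegree), which is what licenses the inference from ``$\phi$ appears in some $f_j^{(t)}$'' to ``$|\phi|^2$ appears in the Taylor expansion of $r$ and hence has weight $\geq 1$.''
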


\begin{proof}
We order the generators $f_k$ by vanishing order. We truncate each generator $f_k$ up to order $\beta=\lceil \Delta_1(M,0) \rceil,$ the ceiling of the D'Angelo 1-type of $M$ at the origin, and denote it by $f_k^{\beta}.$ Since a sum of squares domain is pseudoconvex, by Theorem~\ref{catlinthm}, no terms of higher order than $\beta$ ought to come into the computation of the multitype. We order the terms in the truncated generator $f_k^{\beta}$ by vanishing order and also use the reverse lexicographic ordering to reorder the monomials with the same combined degree. Now let $f_{k,i} = C_{k,i} z_{1}^{{\alpha}_{1}^{k,i}} \cdots z_{n}^{{\alpha}_{n}^{k,i}}$ be the $i$-th monomial in the generator $f_k^{\beta}$ after ordering by vanishing order.  Let the number of distinct combined degrees in the Taylor expansion $f_k^{\beta}$ be $\kappa_k$, and let $\eta_{k,j}$ be the number of non-zero monomials with the same combined degree $\nu_{k,j}$ in $f_k^{\beta}$. Thus,
\[|f_k^{\beta}|^2 = |f_{k,1} + \cdots + f_{k,\eta_k}|^2, \:\: \text{where} \:\: \eta_k = \sum_{j=1}^{\kappa_k} \eta_{k,j}. \]
Here $\eta_k$ is the total number of monomials with nonzero coefficients in the power series expansion of the generator $f_k$ up to order $\beta.$ In the expansion of $|f_k^{\beta}|^2,$ we have two types of terms: squares $|f_{k,i}|^2$ and cross terms $\text{2Re}(f_{k,j}\bar{f}_{k,i})$. For simplicity sake, for each monomial $f_{k,i}$ in the generator $f_k^{\beta},$ we write the terms from the expansion of $|f_k^{\beta}|^2$ into an expression of the form 
\begin{equation} \label{h0}
|f_{k,i}|^2 + \sum_{j=1}^{i-1} \  \text{2Re} (f_{k,j}\bar{f}_{k,i}), 
\end{equation} 
for $i=1,2, \dots, \eta_k.$ 

Define $P_{t,k}$ for $t \geq 1$ to be the sum in the leading polynomial $P_t$ at step $t$ consisting of terms from the expansion of $|f_k^{\beta}|^2$. We could have $P_{t,k}=0$ for some $k, \ 1 \leq k \leq N,$ and for some $t \geq 1$ if there are no terms from the expansion of $|f_k^{\beta}|^2$ in the leading polynomial $P_t$ after the $t$-th step. Thus, $P_t = \sum_{k=1}^N P_{t,k}$. In order to show that the final leading polynomial $P_t$ is a sum of squares, it will suffice to show that each $P_{t,k}$ obtained at each step of the Kol\' a\v r algorithm is a sum of squares. Note that trivially $0$ is a sum of squares.

The Bloom-Graham type is $2\nu_1$, where $\nu_1: = \displaystyle \min_{1 \leq k \leq N} \{\nu_{k,1}\}=\nu_{1,1}$ since the monomials from the expansion of each $|f_k^{\beta}|^2$ as well as the generators $f_k$ are ordered by vanishing order. Thus, the leading polynomial $P_1$ consists of all terms of combined degree $2\nu_1$. Clearly, $P_{1,1}\neq 0$, but $P_{1,k}=0$ for every $k$ such that $\nu_{k,1} > \nu_1.$ Let $k$ be such that $P_{1,k}\neq 0,$ and denote by $m_k \geq 1$ the number of monomials from the Taylor expansion of $f_k^{\beta}$ that have combined degree $\nu_1.$ By writing the terms from the expansion of $|f_k^{\beta}|^2$ into the form given in \eqref{h0}, it is easy to see that the first $m_k$ squares $|f_{k,i}|^2$ for $i= 1, \dots, m_k$ as well as the 
$\begin{pmatrix}
m_k \\ 2 \
\end{pmatrix}$
cross terms $\text{2Re}(f_{k,j}\bar{f}_{k,i})$ for $j<i$ all have combined degree $2\nu_1$. Thus
\begin{equation} \label{h1}
P_{1,k} = |f_{k,1} + \cdots + f_{k,m_k}|^2,
\end{equation}
which is obviously a sum of squares. If $d_1 = 0$, then we are done and $P_1$ becomes the leading polynomial with multitype weight $\Lambda_1$. On the other hand, if $d_1 = n-c$ for $c <n$, then we proceed to the next step.

In the second step, we assume without loss of generality that the $W_1$ value of at least one square in $\text{Q}_1$ can be computed. Then the maximum $W_1$ value exists, which further implies that some terms from the expansion of $|f_k^{\beta}|^2$ for some $k$ will be added to the leading polynomial $P_1$ in order to obtain $P_2$. $P_{2,k}$ might be $0$ for some $k$ if no terms from the expansion of $|f_k^{\beta}|^2$ end up in $P_2$ after this step. Obviously, the interesting case is when $P_{2,k}\neq 0$. Consider $k$ such that $P_{2,k}\neq 0$. Suppose that $u_k$ squares from the expansion of $|f_k^{\beta}|^2$ give the maximum $W_1$ value, and let $|f_{k,c_l}|^2$ for $l=1, \dots, u_k$ be these squares. Here $c_l$ for $l=1, \dots , u_k$ is some positive integer between $1$ and $\eta_k$, and $c_l < c_{l+1}$ for $l=1, \dots, u_k-1$. The argument now splits into two cases:
\begin{itemize}
\item[CASE {\Large 1}:] $P_{1,k}=0$. By Lemma~\ref{nl} part A, we get exactly  
$\begin{pmatrix}
u_k \\ 2 \
\end{pmatrix}$
cross terms $\text{2Re}(f_{k,c_e}\bar{f}_{k,c_l})$, which give the maximal value for $W_1$ and $c_e < c_l$ for $1 \leq e,l \leq u_k$. Combining the $u_k$ squares with the 
$\begin{pmatrix}
u_k \\ 2 \
\end{pmatrix}$
cross terms gives 
\begin{equation} \label{h2}
P_{2,k}= |f_{k,c_1} + \cdots + f_{k,c_{u_k}}|^2.
\end{equation}
\item[CASE {\Large 2}:] $P_{1,k}\neq 0$. Then from Lemma~\ref{nl} part Di, we know that for each square $|f_{k,c_l}|^2$ there are exactly $m_k$ cross terms $\text{2Re}(f_{k,j}\bar{f}_{k,c_l})$ for all $j < c_l \ \text{and} \ j = 1, \dots, m_k$ as well as $l= 1, \dots, u_k$ that give the maximal value for $W_1$. We also know from the first case that there are 
$\begin{pmatrix}
u_k \\ 2 \
\end{pmatrix}$
cross terms $\text{2Re}(f_{k,c_e}\bar{f}_{k,c_l})$ that give the maximal value for $W_1$, and so we obtain the result that 
\begin{equation} \label{h3}
P_{2,k}= |f_{k,1} + \cdots + f_{k,m_k} + f_{k,c_1} + \cdots + f_{k,c_{u_k}}|^2.
\end{equation}
In the $(t+1)$-th step, we begin by first assuming that the sum $P_{t,k}$ for some $k$ is a sum of squares because if $P_{t,k}=0$ the argument is identical to the one given in Case 1. Thus let $P_{t,k}$ be given as $P_{t,k} = |f_{k,b_1} + \cdots + f_{k,b_{v_k}}|^2$, where $v_k$ is the total number of squares from the expansion of $|f_k^{\beta}|^2$ in $P_t$ after step $t$ with $b_j < b_{j+1}$ for $j=1, \dots, v_k-1$. Let's assume that the $W_t$ value of at least one square in $Q_t$ can be computed. This implies that the maximal value for $W_t$ exists.  
Assume that $s_k$ squares from the expansion of $|f_k^{\beta}|^2$ give the maximum $W_t$ value, and let $|f_{k,a_l}|^2$ for $l=1, \dots, s_k$ be these squares. 

From Lemma \ref{nl} part Di, we know that for each square $|f_{k,a_l}|^2$ there are exactly $v_k$ cross terms $\text{2Re}(f_{k,j}\bar{f}_{k,a_l})$ for all $j< a_l$ where $j = b_1, \dots, b_{v_k}$ and $l= 1, \dots, s_k$, which give the maximal value for $W_t$. By Lemma~\ref{nl} part A, there are 
$\begin{pmatrix}
s_k \\ 2 \
\end{pmatrix}$
cross terms $\text{2Re}(f_{k,a_e}\bar{f}_{k,a_l})$ which give the maximal value for $W_t$, and so we obtain the result that 
\begin{equation} \label{h4}
P_{t+1,k}= |f_{k,b_1} + \cdots + f_{k,b_{v_k}} + f_{k,a_1} + \cdots + f_{k,a_{s_k}}|^2.
\end{equation}
Hence $P_{t+1,k}$ is a sum of squares. Since the leading polynomial $P_t$ at each step is a sum of the $P_{t,k}$'s, the leading polynomial at each step and subsequently the final leading polynomial is a sum of squares.
\end{itemize}
\medskip  Since every change of variables that is allowed in the Kol\' a\v r algorithm sends each square to a square and keeps the weight of terms in the leading polynomial $P_t$ the same, it is easy to see that the leading polynomial is still a sum of squares after any change of variables.

\medskip Our assumption of finite D'Angelo 1-type implies that the last entry of the multitype is bounded, and so all entries of the multitype weight will be finite. This means that the Kol\' a\v r algorithm will definitely terminate after a finite number of steps, and so the above procedure can only occur finitely many times. We conclude therefore that the final leading polynomial corresponding to the last weight in the procedure, the multitype weight, is always a sum of squares, too.
\end{proof}

\noindent{\textbf{Proof of Theorem \ref{thm1}}}: We know that the final leading polynomial in the Kol\' a\v r algorithm for computing the multitype at the origin is the model polynomial. Thus, from Theorem \ref{thm5}, we conclude that the model of a sum of squares is a sum of squares.
\begin{flushright}
$\Box$
\end{flushright} 

\vspace{0.5cm}

Before we proof Theorem \ref{thm2} we shall consider the following lemma:

\begin{lemma}\label{mulidinv}
Let $ M \subset \mathbb{C}^{n+1}$ with $0 \in M$  be a hypersurface whose defining function is given by
\[r(z) = \text{2Re}(z_{n+1}) + \sum_{j=1}^{N}|f_j(z_1, \dots, z_n)|^2,\]
where $f_1, \dots, f_N$ are holomorphic functions near the origin.  Let $M' \subset \mathbb{C}^{n+1}$ be another hypersurface whose defining function is given as
\[u(z) = \text{2Re}(z_{n+1}) + \sum_{j=1}^{l-1}|f_j|^2 + |h_lf_l - \sum_{c \neq l} h_c f_c|^2 + \sum_{j=l+1}^{N}|f_j|^2,\]
for some fixed $l$, where $h_j$ is a holomorphic function near the origin for every $j=1,\dots,N.$ Assume that the D'Angelo 1-type of $M$ is finite at the origin. Then the multitype obtained by applying Kol\' a\v r algorithm to both $r(z)$ and $u(z)$ is the same provided that $\left <f_1, \cdots, f_N \right >$ and $$\left <f_1, \cdots,f_{l-1},h_lf_l - \sum_{c \neq l} h_c f_c,f_{l+1},\cdots  f_N \right >$$ represent the same ideal in the ring $\mathcal{O}.$
\end{lemma}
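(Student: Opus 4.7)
The strategy is to induct on the step $t \geq 1$ of the Kol\' a\v r algorithm, showing that the algorithm applied to $r$ and to $u$ produces the same distinguished weight $\Lambda_t$ and the same reduction index $d_t$ at every step, hence the same multitype. For the base case, the first multitype entry equals twice the minimum vanishing order of any generator, which is an invariant of the ideal $I$: since $\langle f_k \rangle = \langle g_k \rangle$, we have $\min_k \operatorname{ord}(f_k) = \min_k \operatorname{ord}(g_k)$, so $m_1^r = m_1^u$. The matching of $d_1$ follows because $P_1^u$ is obtained from $P_1^r$ by an invertible $\C$-linear rearrangement of its summands, the mixing coefficients being the values of the $h_c$ at the origin, and such a rearrangement preserves the number of variables eliminable by a linear change of coordinates.

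For the inductive step, Theorem \ref{thm5} ensures that the leading polynomials $P_t^r$ and $P_t^u$ are both sums of squares of specific truncations of the generators. There are two things to check: (a) $\max W_t$ is the same for $r$ and $u$, so $\Lambda_{t+1}^r = \Lambda_{t+1}^u$; and (b) $d_{t+1}^r = d_{t+1}^u$. For (a), Lemma \ref{nl} together with the remark following it shows that $\max W_t$ is always attained by a square $|z^\alpha|^2$ whose multi-index $\alpha$ is that of some monomial appearing with nonzero coefficient in a generator. Let $T^r$ and $T^u$ denote the multi-index sets of such monomials, and let $T_I = \{\alpha : z^\alpha$ appears with nonzero coefficient in some element of $I\}$ be the monomial support of the common ideal. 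Both $T^r$ and $T^u$ sit inside $T_I$; conversely, since every element of $I$ is an $\mathcal O$-linear combination of generators, each of its monomials factors as a product of a monomial in the coefficient and a monomial in a generator, so every $\alpha \in T_I$ admits a divisor $\beta \in T^r$ (and one in $T^u$) with $\beta \leq \alpha$ componentwise. Inspection of the formula for $W_t(|z^\alpha|^2)$ shows it is non-increasing under componentwise order on its domain of definition, so $\max_{T^r} W_t = \max_{T_I} W_t = \max_{T^u} W_t$. For (b), the next leading polynomial is again a sum of squares whose weight-$1/2$ summands (in the $\Lambda_{t+1}$-grading) are related across $r$ and $u$ by an invertible $\C$-linear transformation arising from the $h_c(0)$, which preserves the rank structure that determines $d_{t+1}$.

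The subtlest step is a technical point in the argument for (a): the divisor $\beta \in T^r$ of an optimal $\alpha \in T_I$ must itself satisfy $d_t(\beta) > 0$, otherwise $W_t(|z^\beta|^2)$ would not be computable. I expect this to follow from the finite D'Angelo 1-type hypothesis, which excludes the degenerate configurations in which the generators are supported only on the old variables of $P_t$; verifying it carefully is where the proof does real work. A second delicate point is making the invertibility statement in (b) rigorous at each step, since the hypothesis that the ideals coincide does not a priori force $h_l$ to be a unit when $f_l$ is redundant; again the finite-type assumption constrains the admissible cases enough that the argument goes through.
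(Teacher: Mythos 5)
Your argument for matching the weights (your part (a)) is a genuinely different route from the paper's: instead of the paper's case analysis on whether the critical monomial of $f_l^{\beta}$ survives the cancellation in $h_lf_l-\sum_{c\neq l}h_cf_c$ (and, when it does not, on whether the cofactor $h_{c,s}$ of the cancelling monomial lies in $\Gamma_1$, $\Gamma_2$, or $\Gamma_3$), you pass to the monomial support of the common ideal and use divisibility together with the componentwise monotonicity of $W_t$. That part can be made to work, and your first ``subtle point'' is not actually where the difficulty lies: if $\beta\le\alpha$ componentwise and $\alpha\in\Theta_t$, then $\sum_{i\le c}\beta_i\mu_i\le\sum_{i\le c}\alpha_i\mu_i<\frac{1}{2}$, while every monomial of every generator has weighted degree at least $\frac{1}{2}$ with respect to $\Lambda_t$ (this is built into the algorithm; cf.\ Corollary \ref{C1}), so $\beta$ must involve a variable not appearing in $P_t$ and $W_t(|z^\beta|^2)$ is computable. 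No appeal to finite type is needed there.

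The genuine gap is the second point you flag and then defer: the invertibility of the $\C$-linear transformation relating the weight-$\frac{1}{2}$ parts of the generators, equivalently the fact that $h_l(0)\neq 0$. Your base case (matching $d_1$) and your step (b) (matching $d_{t+1}$) both rest on this, and ``the finite-type assumption constrains the admissible cases enough'' is not an argument --- finite type does not prevent $h_l(0)=0$. This is exactly where the paper's proof does its real work: it first reduces to the case where some square from $|f_l^{\beta}|^2$ realizes a multitype entry and no nonzero multiple of that square occurs in the other generators' expansions, then shows that if $h_l(0)=0$ no square from $|h_l^{\beta}f_l^{\beta}|^2$ can realize any multitype entry, and combines this with the ideal equality (which, when $h_l(0)=0$, forces $f_l\in\left<f_c \,:\, c\neq l\right>$) to reach a contradiction; hence $h_l(0)\neq 0$ and one may normalize $h_l\equiv 1$. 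Without this step your claim that $d_t^r=d_t^u$ is unsupported: when $h_l(0)=0$ the induced transformation on leading parts is singular, and the Jacobian row-dependence structure that governs $d_t$ via Proposition \ref{P1} need not be preserved. You would need to supply this argument (or an equivalent one) to close the proof.
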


\begin{remark}
Modifying only one generator at a time makes the bookkeeping in the computation of the multitype easier to follow.
\end{remark}

\begin{remark}\label{lincomb}
Assume that there exist some $l$ such that $1 \leq l \leq N$ and holomorphic functions near the origin $h_1, \dots, h_{l-1}, h_{l+1}, \dots, h_N$ such that $\displaystyle f_l =  \sum_{c \neq l} h_c f_c.$ It is clear that $\left <f_1, \cdots, f_N \right >$ and $\left <f_1, \cdots,f_{l-1},f_{l+1},\cdots  f_N \right >$ represent the same ideal in $\mathcal{O}.$ Therefore, applying Lemma~\ref{mulidinv} with $h_l \equiv 1$ shows that adding in the square $|f_l|^2 $ or taking it away makes absolutely no difference as far as the multitype computation goes. This observation will be crucial in the proof of Theorem \ref{thm2}.
\end{remark}

\begin{proof}
We will show that the multitype obtained by applying the Kol\' a\v r algorithm to both $r(z)$ and $u(z)$ is the same. Let $\beta=\lceil \Delta_1(M,0) \rceil$ be the ceiling of the D'Angelo 1-type of $M$ at the origin. We truncate each generator $f_k$ as well as each holomorphic function $h_k$ at the order $\beta$ and denote them by $f_k^{\beta}$ and $h_k^{\beta}$ respectively. Denote by $f_{k,i}$ the $i$-th monomial from the Taylor expansion of $f_k^{\beta}$ after ordering by vanishing order and reverse lexicographic order for the monomials with the same vanishing order. 

By Lemma \ref{nl}, we know that each entry of the multitype is realized by a square. If no square from the expansion of $|f_l^{\beta}|^2$ contributes to the entries of the multitype, then the multitype entries for both defining functions $r(z)$ and $u(z)$ are the same, and there is nothing to prove. We thus assume that there exists at least one square from the expansion of $|f_l^{\beta}|^2$ that contributes to the entries of the multitype and that no nonzero multiple of that square exists in any of the expansions of $|f_1^{\beta}|^2, \dots, |f_{l-1}^{\beta}|^2, |f_{l+1}^{\beta}|^2, \dots, |f_N^{\beta}|^2.$

Next, we claim that if $h_l (0)=0,$ then no square from the expansion of $|h_l^{\beta}f_l^{\beta}|^2$ can contribute to the entries of the multitype. Indeed, let $h_{l,i}$ be the $i$-th monomial from the Taylor expansion of $h_l^{\beta}$ after ordering by vanishing order and reverse lexicographic order for the monomials with the same vanishing order. For every monomial $f_{l,j}$ in $ f_l^{\beta},$ the monomial $h_{l,i}f_{l,s}$ in $ h_l^{\beta}f_l^{\beta}$ has greater combined degree than that of $f_{l,j}$ for every $i\geq 1.$ As a result, $|h_{l,i}f_{l,j}|^2$ cannot give the Bloom-Graham type since the combined degree of $|f_{l,j}|^2$ is strictly less than the combined degree of $|h_{l,i}f_{l,j}|^2$. Furthermore, $W_t(|h_{l,i}f_{l,j}|^2)$ cannot be computed if $W_t(|f_{l,j}|^2)$ gives the maximum $W_t$-value. By the same argument, if  $f_l=\sum_{c \neq l} g_c f_c,$ for $g_c$ with $c=1,\dots, l-1, l+1, \dots, N$ holomorphic functions near the origin, then no square from the expansion of $|f_l^{\beta}|^2$ can contribute to the entries of the multitype unless it is the square of the nonzero constant term of some $g_c$ multiplied by a monomial of $f_c$ that in itself gives that same multitype entry. Since we assumed the contrary, $f_l$ cannot be written in terms of the other generators. By our hypothesis, however, $\left <f_1, \cdots, f_N \right >$ and $\left <f_1, \cdots,f_{l-1},h_lf_l - \sum_{c \neq l} f_ch_c,f_{l+1},\cdots  f_N \right >$ represent the same ideal in the ring $\mathcal{O}.$ Putting these two facts together along with our assumption  that there exists at least one square from the expansion of $|f_l^{\beta}|^2$ that contributes to the entries of the multitype and that no nonzero multiple of that square exists in any of the expansions of $|f_1^{\beta}|^2, \dots, |f_{l-1}^{\beta}|^2, |f_{l+1}^{\beta}|^2, \dots, |f_N^{\beta}|^2,$ we conclude that $h_l (0) \neq 0.$ Without loss of generality, assume $h_l \equiv 1.$ We shall show that modifying the function $f_l^{\beta}$ in the sum of squares by the sum $\sum_{c \neq l} f_c^{\beta}h_c^{\beta}$ does not alter the multitype. We further assume that the sum $\sum_{c \neq l} f_c^{\beta}h_c^{\beta} \neq 0.$ By Lemma \ref{nl}, it suffices to focus on how the omission of some squares of monomials from the term $|f_l^{\beta} - \sum_{c \neq l}h_c^{\beta}f_c^{\beta}|^2$ in the defining function $u(z)$ affects our results. We now break our argument into two cases:

\begin{itemize}
\item[CASE \large 1:] Assume that there exists a monomial $m$ in $f_l^{\beta}$ whose square $|m|^2$ from the expansion of $|f_l^{\beta}|^2$ gives the Bloom-Graham type. We consider two subcases here:
\item[i.] Assume that $m$ is in the expression $f_l^{\beta} - \sum_{c \neq l}h_c^{\beta}f_c^{\beta}$. Then no monomial in the sum $\sum_{c \neq l}h_c^{\beta}f_c^{\beta}$ cancels out $m.$ Clearly, the weights obtained at the first step of the Kol\' a\v r algorithm are the same for both $r(z)$ and $u(z)$ since $|m|^2$ belongs to both defining functions.

\item[ii.] Assume that $m$ is not in the expression $f_l^{\beta} - \sum_{c \neq l}h_c^{\beta}f_c^{\beta}$. Hence, $m$ gets cancelled out in the expression $f_l^{\beta} - \sum_{c \neq l}h_c^{\beta}f_c^{\beta}$ and so does not appear in $u(z).$ Let $\psi$ be the monomial in the sum $\sum_{c \neq l} h_c^{\beta}f_c^{\beta}$ that cancels out $m,$ and write $\psi = h_{c,i}f_{c,j}$ for some $c \neq l,$ where $h_{c,i}$ is some monomial in $h_c^{\beta}$ and $f_{c,j}$ is some monomial in $f_c^{\beta}$. By our assumption, $\psi$ equals $m,$ and its square $|\psi|^2$ gives the Bloom-Graham type as well. The monomial $h_{c,i}$ cannot have vanishing order 1 or higher; otherwise, $f_{c,j}$ must have combined degree less than that of $\psi,$ which contradicts the fact that $|\psi|^2$ gives the Bloom-Graham type. Thus, $h_{c,i} =h_{c,1} \in \C$ and $m = h_{c,1} f_{c,j}.$ Hence, the square $|f_{c,j}|^2$ gives the Bloom-Graham type as well. Even though there is the cancellation in $u(z),$ the weight obtained at the first step having applied the Kol\' a\v r algorithm to $r(z)$ and $u(z)$ is the same. More specifically, the squares $|f_{c,j}|^2$ and $|m|^2$ appear in the expansions of $|f_c^{\beta}|^2$ and $|f_l^{\beta}|^2$ respectively.

\item[CASE \large 2:] Assume that there exists a monomial $m$ in $f_l^{\beta}$ whose square $|m|^2$ from the expansion of $|f_l^{\beta}|^2$ gives the maximum $W_t$-value at the $(t+1)$-th step for $t \geq 1.$ 
\item[i.] Assume that $m$ is in the expression $f_l^{\beta} - \sum_{c \neq l}h_c^{\beta}f_c^{\beta}$. Then no monomial in the sum $\sum_{c \neq l}h_c^{\beta}f_c^{\beta}$ cancels out $m,$ and so the weights obtained at the $(t+1)$-th step are the same for both $r(z)$ and $u(z)$ since $|m|^2$ belongs to both defining functions.

\item[ii.] Assume that $m$ is not in the expression $f_l^{\beta} - \sum_{c \neq l}h_c^{\beta}f_c^{\beta}$. Then $m$ gets cancelled out by some monomial $\psi$ in the sum $\sum_{c \neq l} f_c^{\beta}h_c^{\beta}.$ Let $\psi = h_{c,s}f_{c,j}$ for some $s$ and $j,$  where $h_{c,s}$ is some monomial in $h_c^{\beta}$ and $f_{c,j}$ is some monomial in $f_c^{\beta}$. This implies that $m = \psi$ and $|\psi|^2$ gives the maximal $W_t$-value at step $t+1$ as well. Now let's assume that $h_{c,s} \notin \C.$ Then the combined degree of $f_{c,j}$ is less than that of $\psi$. 

If $W_t(|f_{c,j}|^2)$ cannot be computed, then $W_t(|\psi|^2)$ cannot be computed, which gives a contradiction. Therefore, we assume that $W_t(|f_{c,j}|^2)$ can be computed.
\end{itemize}

\noindent At this point let us recall the definition of $\Gamma_1,$ $\Gamma_2,$ and $\Gamma_3$ as given in the proof of Lemma \ref{nl}. Let $\Gamma_1$ be the set of all non-zero monomials that consist of only variables not in $\text{P}_t$, let $\Gamma_2$ be the set of all non-zero monomials which consist of variables both in $P_t$ as well as variables not in $P_t$, and let $\Gamma_3$ be the set of all non-zero monomials which consist of only variables in $P_t$. Also, recall that for any monomial $f$ if $W_t(|f|^2)$ can be computed, then $f \in \Gamma_1$ or $f \in \Gamma_2$ only. Again, we shall write any monomial $f$ in the form $f = \gamma_1\, \gamma_2$ where $\gamma_1$ and $\gamma_2$ are monomials satisfying $\gamma_1 \in \Gamma_3$ and $\gamma_2 \in \Gamma_1.$ Recall that $$W_t(|f|^2) = \frac{1 - \sum_{i=1}^{\kappa}(\alpha_i+\hat{\alpha}_i)\mu_i}{\sum_{i=\kappa +1}^n(\alpha_i+\hat{\alpha}_i)},$$ where $(\alpha_1, \dots, \alpha_n, \hat{\alpha}_1,\dots, \hat{\alpha}_n)$ is the multiindex of the monomial $|f|^2$ whose $W_t$ is being computed, $\kappa$ is the number of variables in the leading polynomial $P_t$, and $W_t(|f|^2)$ is the $W_t$-value of the term $|f|^2$. 

\medskip
We shall now consider the number $W_t(|\psi|^2)$ given that $W_t(|f_{c,j}|^2)$ can be computed. From Lemma \ref{nl}, $f_{c,j} \in \Gamma_1$ or $\Gamma_2.$ Since the monomial $h_{c,s}$ can belong to $\Gamma_1,$ $\Gamma_2,$ or $\Gamma_3,$ we shall consider three subcases below and assume that $f_{c,j} \in \Gamma_1$ or $\Gamma_2$ in each case: 

\begin{itemize}
\item[a.] Assume that $h_{c,s} \in \Gamma_1.$ Clearly, $W_t(|f_{c,j}|^2)$ and $W_t(|\psi|^2)$ both have the same numerator and the denominator of $W_t(|\psi|^2)$ is greater than that of $W_t(|f_{c,j}|^2)$ because $h_{c,s} \in \Gamma_1.$ Thus, $W_t(|f_{c,j}|^2)$ is greater than $W_t(|\psi|^2),$ which is a contradiction to our hypothesis that $W_t(|\psi|^2)$ is maximal at step $t+1.$

\item[b.] Assume that $h_{c,s} \in \Gamma_2.$ Here, the numerator of $W_t(|\psi|^2)$ is smaller than the numerator of $W_t(|f_{c,j}|^2)$ since $h_{c,s}$ contains a monomial in $\Gamma_3.$ Also, the denominator of $W_t(|\psi|^2)$ is greater than the denominator of $W_t(|f_{c,j}|^2)$ because $h_{c,s}$ contains a monomial in $\Gamma_1.$ Thus, $W_t(|f_{c,j}|^2)$ is greater than $W_t(|\psi|^2)$, which is again a contradiction.

\item[c.] Assume that $h_{c,s} \in \Gamma_3.$ Then $W_t(|f_{c,j}|^2)$ is always greater than $W_t(|\psi|^2)$ for $f_{c,j} \in \Gamma_1$ or $\Gamma_2$ since the denominators of both numbers are equal and the numerator of $W_t(|\psi|^2)$ is less than that of $W_t(|f_{c,j}|^2).$ This gives a contradiction since $W_t(|\psi|^2)$ is maximal at step $t+1.$ 

\medskip From cases (a), (b), and (c) we can see that if $h_{c,s} \notin \C$, then $W_t(|\psi|^2)$ cannot be the maximum at step $t+1,$ and so we have a contradiction to our hypothesis in all three cases. Hence $h_{c,s} \in \C$ and so $m = h_{c,s} f_{c,j}.$ Clearly, $W_t(|f_{c,j}|^2)$ gives the maximal value at step $t+1,$ too. This implies that if we apply the Kol\' a\v r algorithm to both $r(z)$ and $u(z),$ then the multitype entry at the $(t+1)$-th step will be the same for both defining functions. The squares $|f_{c,j}|^2$ and $|m|^2$ appear in the expansions of $|f_c^{\beta}|^2$ and $|f_l^{\beta}|^2$ respectively. We see that regardless of the cancellation in $u(z),$ the weight obtained at the $(t+1)$-th step remains unchanged.   

\end{itemize}

\noindent Clearly, the case when $h_l(0)\neq 0$ combines the analysis for the cases when $h_l \equiv 1$ and $h_l (0)=0.$
\end{proof}

\noindent \textbf{Proof of Theorem \ref{thm2}}: 
Let $M \subset \mathbb{C}^{n+1}$ with $0 \in M$ be a hypersurface whose defining function is given by
\begin{equation} \label{mm}
r(z) = \text{2Re}(z_{n+1}) + \sum_{j=1}^{N}|f_j(z_1, \dots, z_n)|^2,
\end{equation}
where $f_1, \dots, f_N$ are holomorphic functions near the origin. We begin by letting $M' \subset \mathbb{C}^{n+1}$ be another hypersurface whose defining function is given as
\begin{equation} \label{mm1}
u(z) = \text{2Re}(z_{n+1}) + \sum_{j=1}^{S}|g_j(z_1, \dots, z_n)|^2,
\end{equation} 
where $g_1, \dots, g_S$ are also holomorphic functions near the origin, and show that the multitype obtained by applying the Kol\' a\v r algorithm to both $r(z)$ and $u(z)$ is the same provided that $\left <f_1, \cdots, f_N \right >$ and $\left <g_1, \cdots,g_S \right >$ represent the same ideal in the ring $\mathcal{O}.$ 

Let the ideals associated to the hypersurfaces $M$ and $M'$ be given by $\left<f \right>= \left<f_1, \dots, f_N \right>$ and $\left<g \right> = \left<g_1, \dots, g_S \right>$ respectively, and suppose that $\left< f \right> = \left< g\right>.$ By Remark~\ref{lincomb} following the statement of Lemma~\ref{mulidinv}, we know that adding in the square of any element of the ideal $\left<f_1, \dots, f_N \right>$ does not modify the multitype because that element can be written in terms of the generators $f_1, \dots, f_N$ with coefficients in $\mathcal{O}.$ Since $\left<f_1, \dots, f_N \right>=\left<g_1, \dots, g_S \right>,$ each $g_j$ is an element of $\left<f_1, \dots, f_N \right>$ and can be written in terms of 
$f_1, \dots, f_N$ with coefficients in $\mathcal{O}.$ Therefore, 
\eqref{mm} has the same multitype at the origin as \[r_1(z) = \text{2Re}(z_{n+1}) + \sum_{j=1}^{N}|f_j(z_1, \dots, z_n)|^2+|g_1(z_1, \dots, z_n)|^2,\] and inductively, the same multitype at the origin as  \[r_S(z) = \text{2Re}(z_{n+1}) + \sum_{j=1}^{N}|f_j(z_1, \dots, z_n)|^2+ \sum_{k=1}^{S}|g_k(z_1, \dots, z_n)|^2.\] Now, we apply the argument in reverse. Since $\left<g_1, \dots, g_S \right>=\left<f_1, \dots, f_N \right>,$ each $f_j$ is an element of $\left<g_1, \dots, g_S \right>$ and can be written in terms of $g_1, \dots, g_S$ with coefficients in $\mathcal{O}.$ Therefore, by Remark~\ref{lincomb}, 
\eqref{mm1} has the same multitype at the origin as \[u_1(z) = \text{2Re}(z_{n+1}) + \sum_{k=1}^{S}|g_k(z_1, \dots, z_n)|^2+|f_1(z_1, \dots, z_n)|^2,\] and inductively, as  \[r_S(z) = \text{2Re}(z_{n+1}) + \sum_{k=1}^{S}|g_k(z_1, \dots, z_n)|^2+ \sum_{j=1}^{N}|f_j(z_1, \dots, z_n)|^2.\] We conclude that $r(z)$ and $u(z)$ have the same multitype at the origin, namely that the multitype is an invariant of the ideal of generators. 


\begin{flushright}
$\Box$
\end{flushright} 

\vspace{0.5cm}

\section{An Ideal Restatement of the Kol\' a\v r Algorithm}
In this section, we give the ideal restatement of the Kol\' a\v r algorithm for the multitype computation for sum of squares domains in $\C^{n+1}.$ Before that, we shall consider the following:

Let $M \subset \C^{n+1}$ be a hypersurface given by $r =0$ with 
\[r = \text{2Re}(z_{n+1}) + \sum_{k=1}^{N}|f_k(z_1, \dots, z_n)|^2,\]
where $f_1, \dots, f_N$ are holomorphic functions defined on a neighborhood of the origin. Assume that the D'Angelo 1-type is finite at the origin and that  $\beta=\lceil \Delta_1(M,0) \rceil,$ the ceiling of the D'Angelo 1-type of $M$ at the origin. We truncate each holomorphic function $f_k$ at the order $\beta$ and let $f_{k,i}$ be the $i$-th monomial of the generator $f_k^{\beta}$ after ordering by vanishing order. The ideal corresponding to the defining function $r$ is given as $\mathcal{I}=(z_{n+1}, f_1^{\beta}, \dots, f_N^{\beta})$. We know that the term $z_{n+1}$ has weight 1. Following the original algorithm of Kol\' a\v r, we shall ignore the term $z_{n+1}$ and work with the corresponding ideal $\mathcal{I}=(f_1^{\beta}, \dots, f_N^{\beta})$. 

\medskip From Theorem \ref{thm1} we know that all leading polynomials produced are sums of squares. Therefore, any leading polynomial $P_j$ can be written in the form 
\begin{equation} \label{f1}
P_j = \sum_{k=1}^N \Big|\sum_{i = 1}^{v_k} f_{k,a_i} \Big|^2,
\end{equation}
where the $f_{k,a_i}$'s are the monomials from the generator $f_k^{\beta}$ of weighted degree $\frac{1}{2}$ with respect to $\Lambda_j$. We will associate to every leading polynomial $P_j$ the ideal $\mathcal{I}_{P_j}$ given by
\begin{equation} \label{f2}
\mathcal{I}_{P_j} = \Bigg(\sum_{i = 1}^{v_1} f_{1,a_i}, \dots , \sum_{i = 1}^{v_N} f_{N,a_i} \Bigg).
\end{equation}
It is convenient to introduce notation for each square in $P_j.$ Let $P_{j,k} = \Big|\sum_{i = 1}^{v_k} f_{k,a_i} \Big|^2$. Then its associated ideal $\mathcal{I}_{P_{j,k}}$ can be expressed as 
\begin{equation} \label{f3}
\mathcal{I}_{P_{j,k}} = \Bigg(\sum_{i = 1}^{v_k} f_{k,a_i} \Bigg) = (f_{k,a_1}+\cdots+ f_{k,a_{v_k}})
\end{equation}
Clearly,
\begin{equation} \label{f4}
\mathcal{I}_{P_j} = \sum_{k=1}^N \mathcal{I}_{P_{j,k}}.
\end{equation}
Recall also that each monomial in every leading polynomial is of weighted degree one with respect to the corresponding weight. As a result, the weighted degree of any monomial $f_{k,a_i}$ is exactly one half with respect to the corresponding weight. 

Thus, given $\mathcal{I}_{P_{j,k}} = (f_{k,a_1} +\cdots+ f_{k,a_{v_k}})$, the $f_{k,a_i}$'s are exactly the monomials from the generator $f_k^{\beta}$ of weighted degree $\frac{1}{2}$ with respect to $\Lambda_j$. 

\medskip Set the ideal $\mathcal{I} = \mathcal{I}_0.$ For $j \geq 1$, the ideals $\mathcal{I}_{P_j}, \ \mathcal{I}_j, \ \text{and} \ \mathcal{I}_{P_{j,k}}$ can be described as follows:

\begin{enumerate}
\item[1.] $\mathcal{I}_{P_j}$ is the ideal whose generators are precisely the terms from the generators of the ideal $I_{j-1}$ having weighted order exactly $\frac{1}{2}$ with respect to the weight $\Lambda_j.$ We refer to the ideal $\mathcal{I}_{P_j}$ as the \textit{leading polynomial ideal}. 
\item[2.] The ideal $\mathcal{I}_j$ is the ideal obtained after applying the chosen $\Lambda_j$-homogeneous transformation, which makes the generators of $\mathcal{I}_{P_j}$ to be independent of the largest number of variables, to $\mathcal{I}_{j-1}$. Simply put, $I_j$ is the ideal obtained after changing variables in the ideal $I_{j-1}$.
\item[3.] $\mathcal{I}_{P_{j,k}}$ is the principal ideal whose generator is the sum of monomials in the generator $f_k^{\beta}$ with weighted degree exactly $\frac{1}{2}$ with respect to the weight $\Lambda_j.$ The ideal $\mathcal{I}_{P_{j,k}}$ is the zero ideal if no monomial in the generator $f_k^{\beta}$ has weighted degree $\frac{1}{2}$ with respect to the weight $\Lambda_j.$
\end{enumerate} 

\medskip

\noindent{\textbf{The Kol\' a\v r Algorithm (Ideal Version):}} Set the ideal $\mathcal{I} = \mathcal{I}_0,$ and compute the vanishing order at the origin of $\mathcal{I}_0$, which is the same as the degree $\nu_1$ of the lowest order monomial in $\mathcal{I}_0$. We define the Bloom-Graham type as twice the vanishing order of $\mathcal{I}_0$. This gives the first entry of the multitype $m_1 $ and so let $m_1 = 1/\mu_1,$ where $\mu_1 = 2\nu_1$. Set the first weight to be $\Lambda_1 = (\mu_1, \dots, \mu_1)$. 

In the second step, consider all $\Lambda_1$-homogeneous transformations, and choose one that will make the set of all generators of the leading polynomial ideal $\mathcal{I}_{P_1}$ to be independent of the largest number of variables. Denote this number by $d_1$. In the local coordinates after such a $\Lambda_1$-homogeneous transformation, we obtain that $\mathcal{I}_{P_1}$ is the ideal whose generators consist of those monomials in the variables $z_1, \dots, z_{n-d_1}$, which are of weighted degree 1/2 with respect to  $\Lambda_1.$ Apply the chosen $\Lambda_1$-homogeneous transformation to the ideal $\mathcal{I}_0$ to obtain the ideal $\mathcal{I}_1$. The rest of the terms from the generators in $\mathcal{I}_1,$ which are not in $\mathcal{I}_{P_1},$ have weighted degrees strictly greater than $\frac{1}{2}$ with respect to $\Lambda_1$. 

We shall now give a slightly modified version of Kol\' a\v r's  $\Theta_1$ and $W_1$. If $\alpha^{k,j} = (\alpha_1^{k,j}, \dots, \alpha_n^{k,j})$ is the multiindex of a monomial $f_{k,j}$ from any generator of $\mathcal{I}_1$, which is not in $\mathcal{I}_{P_1},$ then $f_{k,j}$ is of the form 
\[f_{k,j} =  C_{k,j}^1 z^{\alpha^{k,j}} \ \ \text{and} \ \ \ |\alpha^{k,j}|_{\Lambda_1} > \frac{1}{2}.\] 
Let
$$\Theta_1 = \left\{\alpha^{k,j} \:\: \Bigg| \:\: \ C_{k,j}^1 \neq 0 \:\: \text{and} \:\: \sum_{i=1}^{n-d_1} \alpha_i^{k,j} \mu_i < \frac{1}{2} \right\}.$$
For every $\alpha^{k,j} \in \Theta_1$,

\begin{equation} \label{idw1}
W_1(\alpha^{k,j}) = \frac{\frac{1}{2} - \sum_{i=1}^{n-d_1} \alpha_i^{k,j} \mu_i}{\sum_{i=n-d_1+1}^{n} \alpha_i^{k,j}}.
\end{equation}
The next weight $\Lambda_2$ is defined by letting 

\[\lambda_i^2 = \max_{\alpha^{k,j} \in \Theta_1} W_1(\alpha^{k,j})\] 
for $i > n-d_1$, and $\lambda_i^2= \mu_1$ for $i \leq n-d_1$. To complete the second step, we let $\mathcal{I}_{P_2}$ be the second leading polynomial ideal corresponding to the weight $\Lambda_2$. The generators of $\mathcal{I}_{P_2}$ depend on more than $n-d_1$ variables.

We proceed by induction. At the step $t,$ for $t>2,$ we consider all $\Lambda_{t-1}$-homogeneous transformations and choose one that makes the generators of the leading polynomial ideal $\mathcal{I}_{P_{t-1}}$ to be independent of the largest number of variables. Denote this number by $d_{t-1}$. 
Apply this $\Lambda_{t-1}$-homogeneous transformation to the previous ideal $\mathcal{I}_{t-2}$ in the $(t-1)$-th step to obtain the ideal $\mathcal{I}_{t-1}$. We know from the Kol\' a\v r algorithm that the number of multitype entries that are added at each step of the computation depends on the difference $(d_{t-2} - d_{t-1})$. 
We consider two cases:

\begin{itemize}
\item[CASE {\large 1}:] Assume that $d_{t-2} > d_{t-1}$. Again recall that for any weight $\Lambda$ that is smaller than $\Lambda_{t-1}$ with respect to the lexicographic ordering, $\Lambda$-adapted coordinates are also $\Lambda_{t-1}$-adapted. This implies that we get $(d_{t-2} - d_{t-1})$ multitype entries
\[
\mu_{n-d_{t-2}+1} = \cdots = \mu_{n-d_{t-1}} = \lambda_{n- d_{t-2}+1}^{t-1}
\]
and let $\lambda_i^t = \mu_i$ for $i \leq n - d_{t-2}$. Here, $\mathcal{I}_{P_{t-1}}$ is the ideal whose generators are sums of monomials in  the variables $z_1, \dots, z_{n-d_{t-1}}$ that are $\Lambda_{t-1}$-homogeneous of weighted degree $\frac{1}{2}.$ To obtain $\lambda_i^t$ for $i>n-d_{t-1}$, we consider the rest of the monomials from the generators in $\mathcal{I}_{t-1}$ that are not in $\mathcal{I}_{P_{t-1}}$. Using these monomials that have weighted degree strictly greater than $\frac{1}{2}$ with respect to $\Lambda_{t-1}$, we define $\Theta_{t-1}$ and compute $W_{t-1}$ in a similar way as in step two. Such monomials are of the form $f_{k,j} =  C_{k,j}^{t-1} z^{\alpha^{k,j}}$ for multiindex  $\alpha^{k,j} = (\alpha^{k,j}_1, \dots, \alpha
^{k,j}_n)$ satisfying $|\alpha^{k,j}|_{\Lambda_{t-1}} > \frac{1}{2}.$ Thus,

$$\Theta_{t-1} = \left\{\alpha^{k,j} \:\: \Bigg| \:\: \ C_{k,j}^{t-1} \neq 0 \:\: \text{and} \:\: \sum_{i=1}^{n-d_{t-1}} \alpha_i^{k,j}  \mu_i < \frac{1}{2} \right\}.$$

For every $\alpha^{k,j} \in \Theta_{t-1}$,

\begin{equation} \label{idwr}
W_{t-1}(\alpha^{k,j}) = \frac{\frac{1}{2} - \sum_{i=1}^{n-d_{t-1}} \alpha_i^{k,j} \mu_i}{\sum_{i=n-d_{t-1}+1}^{n} \alpha_i^{k,j}}.
\end{equation}
So for the remaining multitype entries of $\Lambda_j,$ we let
\[
\lambda_i^t = \max_{\alpha^{k,j} \in \Theta_{t-1}} W_{t-1}(\alpha^{k,j}),\] 
for $i > n-d_{t-1}$. 

\item[CASE {\large 2}:] Assume that $d_{t-1} = d_{t-2}$. There are zero multitype entries computed in this case, and so we only determine $\lambda_i^t$ for $t > n-d_{t-1}$ using \eqref{idwr}. This completes the step $t$ of the algorithm. 
\end{itemize}

We can thus establish a one-to-one correspondence between the leading polynomial $P_t$ for $t \geq 1$ and the intermediate ideal $\mathcal{I}_{P_t}$ introduced above. Since working with ideals of holomorphic functions is often easier than with real-valued polynomials, the restatement of the Kol\' a\v r algorithm simplifies multitype computations for a sum of squares domain. We work with considerably fewer terms in the case of the ideals as compared to sums of squares. In particular, for each modulus square of a generator consisting of $m $ monomials, Kol\'a\v r's original algorithm involves working with $m$ squares plus $ \begin{pmatrix}
m \\ 2 \
\end{pmatrix}$ cross terms, whereas this restatement in terms of ideals involves computations for only $m$ monomials.

\vspace{0.5cm}
We give a corollary to Theorem \ref{thm1}:

\begin{corollary}\label{C1}
\textit{Let $M \subset \mathbb{C}^{n+1}$ be a hypersurface given by $r=0$ with 
\[r = \text{2Re}(z_{n+1}) + \sum_{j=1}^{N}|f_j(z_1, \dots, z_n)|^2,\]
where $f_1, \dots, f_N$ are holomorphic functions defined on a neighbourhood of the origin and assume that the D'Angelo 1-type of $M$ at the origin is finite. Then for $\ell \geq 1$, each monomial from every generator of the leading polynomial ideal \ $\mathcal{I}_{P_{\ell}}$ obtained at the $\ell$-th step of the Kol\' a\v r algorithm has weighted degree $\frac{1}{2}$ with respect to the weight $\Lambda_{\ell}$.}
\end{corollary}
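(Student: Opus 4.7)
The plan is to derive the corollary directly from the sum-of-squares representation of the leading polynomial given by Theorem~\ref{thm1} (and established concretely in the proof of Theorem~\ref{thm5}) together with the $\Lambda_\ell$-homogeneity of $P_\ell$. By the definition \eqref{f2}, the generators of $\mathcal{I}_{P_\ell}$ arise as the inner holomorphic polynomials $g_k = \sum_{i=1}^{v_k} f_{k,a_i}$ in a decomposition
\[P_\ell = \sum_{k=1}^N |g_k|^2,\]
so it suffices to show that every nonzero monomial $f_{k,a_i} = C_{k,a_i}\, z^{\alpha^{k,a_i}}$ appearing in some $g_k$ satisfies $|\alpha^{k,a_i}|_{\Lambda_\ell} = \frac{1}{2}$.

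First, I would observe that the diagonal of $|g_k|^2$ contributes the monomial $|C_{k,a_i}|^2\, z^{\alpha^{k,a_i}} \bar z^{\alpha^{k,a_i}}$ with strictly positive real coefficient. Since every $|g_{k'}|^2$ is itself a modulus square of a holomorphic polynomial, its diagonal contribution to any monomial of the form $z^\beta \bar z^\beta$ is also a non-negative real number. Therefore, no cancellation can occur at the diagonal monomial $z^{\alpha^{k,a_i}} \bar z^{\alpha^{k,a_i}}$, and this term must appear in $P_\ell$ with strictly positive coefficient.

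Next, I would invoke the $\Lambda_\ell$-homogeneity of the leading polynomial $P_\ell$ at weighted degree $1$, which is built into the Kol\'a\v r algorithm by construction. The monomial $z^{\alpha^{k,a_i}} \bar z^{\alpha^{k,a_i}}$ has weighted length $2\,|\alpha^{k,a_i}|_{\Lambda_\ell}$, and since it appears in $P_\ell$ with nonzero coefficient, this quantity must equal $1$, yielding $|\alpha^{k,a_i}|_{\Lambda_\ell} = \frac{1}{2}$ as claimed.

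The only subtle point in this argument is the non-cancellation step, which hinges on the observation that the diagonal part of any holomorphic modulus square is a sum of monomials in $z$ and $\bar z$ with non-negative real coefficients. Everything else is an immediate consequence of Theorem~\ref{thm1} and the defining property of the leading polynomial, so no induction on $\ell$ is required.
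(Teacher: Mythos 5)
Your argument is correct, but it takes a genuinely different route from the paper. The paper proves the corollary by induction on the steps of the algorithm: at each step it separates the monomials of a generator of $\mathcal{I}_{P_{\ell+1,k}}$ into those carried over from $\mathcal{I}_{P_{\ell,k}}$ (whose variables all lie in the leading polynomial $P_\ell$, hence keep the same weights in $\Lambda_{\ell+1}$ as in $\Lambda_\ell$, so their weighted degree remains $\tfrac12$) and those newly added (which have weighted degree $\tfrac12$ with respect to $\Lambda_{\ell+1}$ by the definition of the maximal $W_\ell$-value). You instead bypass the induction entirely: you combine the sum-of-squares structure of $P_\ell$ from Theorem~\ref{thm5} with the $\Lambda_\ell$-homogeneity of $P_\ell$ at weighted degree $1$, and use the observation that the coefficient of a diagonal monomial $z^\beta\bar z^\beta$ in $\sum_k|g_k|^2$ is a sum of non-negative contributions (cross terms $C_i\bar C_j z^{\alpha_i}\bar z^{\alpha_j}$ with $\alpha_i\neq\alpha_j$ never land on the diagonal, and there are no cross terms between distinct squares), so the square of each monomial of each generator survives in $P_\ell$ with strictly positive coefficient and must therefore have weighted degree $1$. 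This is shorter and arguably cleaner, at the cost of leaning on the identification in \eqref{f1}--\eqref{f2} of the generators of $\mathcal{I}_{P_\ell}$ with the inner holomorphic polynomials of the decomposition produced in the proof of Theorem~\ref{thm5}; the paper's induction, by contrast, makes explicit the mechanism (stability of the weights of variables already appearing in the leading polynomial) that the ideal version of the algorithm exploits at each step. Both arguments are valid; yours trades that structural information for brevity.
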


\begin{proof}
 Let $f_k^{\beta}$ be the Taylor expansion of the holomorphic function $f_k$ to the order $\beta$, where $\beta$ is the ceiling of the D'Angelo 1-type. We order the generators by vanishing order and let $\mathcal{I} = (f_1^{\beta}, \dots, f_N^{\beta})$. Now assume that the vanishing order of the ideal $\mathcal{I}$ is $\nu >0.$ Then the Bloom-Graham type is precisely $2\nu$ and the weight $\mu_1 = \frac{1}{2\nu}$ with $\Lambda_1 = (\frac{1}{2\nu}, \dots, \frac{1}{2\nu}).$ Thus, $\mathcal{I}_{P_1}$ is not the zero ideal.

For every $k$ such that \ $\mathcal{I}_{P_{1,k}}$ is not the zero ideal,
\[\mathcal{I}_{P_{1,k}} = (f_{k,1}, \dots, f_{k,m_k}), \]
where each monomial $f_{k,i}$, for $1 \leq i \leq m_k,$ has weighted degree $\frac{1}{2}$ with respect to the weight $\Lambda_1.$ Next, assume that in the second step, the ideal $\mathcal{I}_{P_{1,k}}$ is the same as the ideal $\mathcal{I}_{P_{2,k}}$. We know that the entries corresponding to each variable in the monomial $f_{k,i},$ for $1 \leq i \leq m_k,$ are the same in both weights $\Lambda_1$ and $\Lambda_2.$ Therefore, each monomial from the generator $\sum_{i=1}^{m_k}f_{k,i}$ has weighted degree $\frac{1}{2}$ with respect to $\Lambda_2$ as well. Assume that the principal ideal $\mathcal{I}_{P_{2,k}}$ is generated by the sum $\sum_{i=1}^{m_k}f_{k,i} + \sum_{j=1}^{\gamma_k}f_{k,b_j}.$ Since the new sum $\sum_{j=1}^{\gamma_k}f_{k,b_j}$ corresponds to the new weight $\Lambda_2$, each monomial $f_{k,b_j}$ has weighted degree $\frac{1}{2}$ with respect to $\Lambda_2.$ Therefore, every monomial in the generator of the ideal $\mathcal{I}_{P_{2,k}}$ has weighted degree $\frac{1}{2}$ with respect to $\Lambda_2.$

Next, we assume that for $\ell \geq 2$
 \[ \mathcal{I}_{P_{\ell,k}} = (f_{k,a_1} + \cdots + f_{k,a_{v_k}}),\] 
where each monomial $f_{k,a_i}$, for $1 \leq i \leq v_k,$ has weighted degree exactly equal to $\frac{1}{2}$ with respect to the weight $\Lambda_{\ell}.$
 
Now, assume that at step $\ell+1$, the ideal $\mathcal{I}_{P_{{\ell+1},k}}$ is the same as the ideal $\mathcal{I}_{P_{\ell,k}}.$  Every monomial in the generator of  $\mathcal{I}_{P_{{\ell+1},k}}$ also has weighted degree equal to $\frac{1}{2}$ with respect to the weight $\Lambda_{\ell+1}$ because even though $\Lambda_{\ell}$ is not the same as $\Lambda_{\ell+1},$ the weight corresponding to each variable in $f_{k,a_{v_k}}$ is the same in both weights $\Lambda_{\ell}$ and $\Lambda_{\ell+1}$.

Next, assume that at step $\ell+1$ the  sum $\sum_{i=1}^{u_k}f_{k,b_{u_k}}$ is added to the sum $\sum_{i=1}^{v_k}f_{k,a_{v_k}}$ to obtain the generator of the ideal 
\[ \mathcal{I}_{P_{{\ell+1},k}} = (f_{k,a_1} + \cdots + f_{k,a_{v_k}}+ f_{k,b_1} + \cdots + f_{k,b_{u_k}}).\]
This implies that each monomial $f_{k,b_{u_k}}$ has weighted degree $\frac{1}{2}$ with respect to the weight $\Lambda_{\ell+1}$. Again, each monomial $f_{k,a_{v_k}}$ is of weighted degree $\frac{1}{2}$ with respect to the weight $\Lambda_{\ell+1}$ since the weight corresponding to each variable in $f_{k,a_{v_k}}$ is the same in both weights $\Lambda_{\ell}$ and $\Lambda_{\ell+1}$. Thus, every monomial from the generator of the ideal $\mathcal{I}_{P_{{\ell+1},k}}$ has weighted degree $\frac{1}{2}$ with respect to the weight $\Lambda_{\ell+1}.$
\end{proof}

\vspace{0.5cm}

\medskip\noindent The example that follows is the ideal restatement of the Kol\' a\v r algorithm applied to the defining function given in Example \ref{K1}.

\begin{example}\label{K3}
Let $M \subset \C^{4}$ be a sum of squares domain given by the defining function
\[r = \text{2Re}(z_4) + |z_1-z_2+z_3^2|^2 + |z_1^2-z_2^2|^2.\]
The associated ideal then becomes 
\[\mathcal{I}=(z_1 - z_2 + z_3^2, z_1^2-z_2^2)= \mathcal{I}_0.\]

\noindent The vanishing order here equals one, and so the Bloom-Graham type is 2, which implies that $\mu_1 = \frac{1}{2}$ and $\Lambda_1= (\frac{1}{2}, \frac{1}{2}, \frac{1}{2})$. Hence
\[\mathcal{I}_{P_1} = (z_1-z_2) = \mathcal{I}_{P_{1,1}}.\]
So $\mathcal{I}_{P_{1,2}}$ is the zero ideal. We consider all $\Lambda_1$-homogeneous transformations and choose $$\tilde{z}_1 = z_1 - z_2 \quad \tilde{z}_j = z_j, \quad j=2,3,4,$$ to make $\mathcal{I}_{P_1}$ independent of the largest number of variables. We shall ignore $\sim$ where no confusion arises. Thus $d_1 = 2.$
\[\mathcal{I}_{P_1} = (z_1) = \mathcal{I}_{P_{1,1}}.\]
Applying these variable changes to $\mathcal{I}_0$ gives $\mathcal{I}_1 = (z_1+z_3^2, z_1^2 + 2z_1z_2).$ $\max(W_1) = \frac{1}{4}$ and $\Lambda_2 = (\frac{1}{2}, \frac{1}{4} , \frac{1}{4})$. 
\[
\mathcal{I}_{P_2} =(z_1 + z_3^2),
\] 
where $\mathcal{I}_{P_{2,1}}= \mathcal{I}_{P_2}$ and $\mathcal{I}_{P_{2,2}}$ is the zero ideal. Again, consider all $\Lambda_2$-homogeneous transformations  and choose $$\tilde{z}_1 = z_1 + z_3^2 \quad \tilde{z}_j = z_j, \quad j=2,3,4,$$ which makes $\mathcal{I}_{P_2}$ to be dependent on only the variable $z_1$.  Again, we ignore the sign $\sim$. Here $d_2=2$ and 
\[
\mathcal{I}_{P_2} =(z_1),
\]
where $\mathcal{I}_{P_{2,1}}= \mathcal{I}_{P_2}$ and $\mathcal{I}_{P_{2,2}}$ is the zero ideal. We apply the new coordinates to $\mathcal{I}_1$ to get $$\mathcal{I}_2 = (z_1, z_1^2+ 2z_1z_3^2 + z_3^4 + 2z_1z_2 - 2z_2z_3^2).$$ $\max(W_2) = \frac{1}{6}$ and $\Lambda_3=(\frac{1}{2}, \frac{1}{6} , \frac{1}{6})$. 
\[\mathcal{I}_{P_3} = (z_1, -2z_2z_3^2) = (z_1, 2z_2z_3^2).\] 
Here $\mathcal{I}_{P_{3,1}}= (z_1)$ and $\mathcal{I}_{P_{3,2}} =(2z_2z_3^2)$. No $\Lambda_3$-homogeneous transformation can make $\mathcal{I}_{P_3}$ to be independent of any variables and so $d_3=0$. Thus, the multitype weight $\Lambda^{*} = \Lambda_3$ and the final leading polynomial ideal is \[\mathcal{I}_{P_3} = (z_1, z_2z_3^2).\]

\end{example}

\section{Variables Changes In the Kol\' a\v r Algorithm}
We now give an explicit construction of the polynomial transformations that are performed in the Kol\' a\v r algorithm. We begin this construction by first relating the weighted homogeneous polynomial transformations to pairs of row-column operations on the Levi matrix of a sum of squares domain. More specifically, we show that each polynomial transformation corresponds to some finite sequence of row and column operations. Lemma \ref{rcoperationi} below establishes this correspondence.

Throughout this section, we shall assume the following:

\noindent Let $M \subset \C^{n+1}$ be the boundary of a sum of squares domain defined by $\{ r <0\}$, where 
\[ r = \text{2Re}(z_{n+1}) + \sum_{j=1}^N |f_j(z_1, \dots , z_n)|^2,\]
and $f_1, \dots, f_N$ are holomorphic functions in the neighborhood of the origin. Let
\[ r_0 = \text{2Re}(z_{n+1}) + \text{P}(z,\bar{z})\]
be the defining function of the model hypersurface $M_0$ of $M,$ where $P(z,\bar z)$ is a polynomial of weighted degree 1 with respect to the multitype weight at the origin $\Lambda^{*}$ of $M.$ Let A be the $n \times n$ Levi matrix of the model $M_0 \subset \C^{n+1},$ where we ignore the contribution of the $(n+1)^{st}$ coordinate as the holomorphic polynomials in the sum of squares do not depend on it as mentioned in the introduction.

\begin{lemma} \label{rcoperationi}
Assume that the D'Angelo 1-type of the hypersurface $M$ in $\C^{n+1}$ at 0 is finite. Let $i \in \{1, \dots, n \}$ be fixed, and let $h \in \C[z]$  for $z = (z_1, \dots, z_n)$ be a nonzero monomial independent of $z_i.$ Let $h_{\ell}$ denote the derivative of $h$ with respect to the variable $z_{\ell},$ which is $\partial_{z_{\ell}} h$ with $l \in \{1, \dots, n \}\setminus{\{i \}}.$ Furthermore, let $h_{\ell}(\tau)$ denote $h_{\ell}$ with every factor of $z_l$ replaced by a factor of $\tau.$ Performing the elementary row and column operations $\text{R}_{\ell} - h_{\ell} \text{R}_i \rightarrow \text{R}_{\ell}$ and $ \text{C}_{\ell}  - \bar{h}_{\ell}\text{C}_i \rightarrow \text{C}_{\ell}$ on the Levi matrix A of $r_0$ for all variables $z_{\ell}$ in $h$ corresponds to the polynomial transformation 
\[ \tilde{z}_i = z_i + \int_0^{z_{\ell}}h_{\ell}(\tau) \ d\tau = z_i + h; \:\: \quad \tilde z_{\omega}=z_{\omega} \:\: \text{for} \:\: \omega \neq i\]
in the sense that the new matrix $\tilde{A}$ obtained after these elementary operations is Hermitian and is the Levi matrix of the new defining function of the sum of squares domain after the change of variables $z_{\omega} \to \tilde z_{\omega}$ for $\omega=1, \dots, n+1$ has taken place.
\end{lemma}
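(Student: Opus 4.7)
\medskip
The strategy is to compute the Levi matrix after the change of variables via the chain rule and show it agrees entry by entry with the matrix obtained from $\text{A}$ by the prescribed row-column operations.

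Because $h$ is a monomial in $z_1,\dots,z_n$ independent of $z_i$, the map $\tilde z_i=z_i+h$, $\tilde z_\omega=z_\omega$ for $\omega\neq i$, is a biholomorphism with \emph{exact} inverse $z_i=\tilde z_i-h(\tilde z)$, $z_\omega=\tilde z_\omega$ otherwise; in particular $h(z)=h(\tilde z)$ and $h_\ell(z)=h_\ell(\tilde z)$ for every $\ell\neq i$, since $h$ and its partials do not see the one coordinate that moves. First I would pull back the model defining function, $\tilde r_0=2\operatorname{Re}(\tilde z_{n+1})+P(z(\tilde z),\bar z(\bar{\tilde z}))$, and record the chain-rule identities
\[
\partial_{\tilde z_k}=\partial_{z_k}-h_k\,\partial_{z_i}\ (k\neq i),\qquad \partial_{\tilde z_i}=\partial_{z_i},
\]
together with their complex conjugates. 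Using that $h_k$ is holomorphic, so that $\partial_{\tilde z_k}$ commutes with multiplication by $\overline{h_l}$, composing these relations yields, for $k,l\neq i$,
\[
\tilde{\text{A}}_{kl}=\text{A}_{kl}-h_k\,\text{A}_{il}-\overline{h_l}\,\text{A}_{ki}+h_k\overline{h_l}\,\text{A}_{ii},
\]
with the border formulas $\tilde{\text{A}}_{il}=\text{A}_{il}-\overline{h_l}\,\text{A}_{ii}$, $\tilde{\text{A}}_{ki}=\text{A}_{ki}-h_k\,\text{A}_{ii}$, and $\tilde{\text{A}}_{ii}=\text{A}_{ii}$.

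Next I would interpret the full row-column procedure as conjugation by a single elementary matrix. Let $E$ be the matrix differing from the identity only in column $i$, where $E_{\ell i}=-h_\ell$ for every $\ell\neq i$ (note that $h_\ell\equiv 0$ whenever $z_\ell$ does not appear in $h$, so this is consistent with the statement of the lemma). The block of row operations $R_\ell-h_\ell R_i\to R_\ell$ is then precisely left multiplication by $E$, and the block of column operations $C_\ell-\overline{h_\ell}\,C_i\to C_\ell$ is right multiplication by $E^{*}$. A direct computation of $E\,\text{A}\,E^{*}$ reproduces exactly the formulas above for $\tilde{\text{A}}_{kl}$, completing the identification. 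Hermicity of $\tilde{\text{A}}$ is then automatic from the factorization $\tilde{\text{A}}=E\,\text{A}\,E^{*}$ with $\text{A}=\text{A}^{*}$, which also agrees with the a priori fact that $\tilde{\text{A}}$ is the Levi matrix of a real-valued defining function.

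The only delicate bookkeeping, which I view as the main obstacle, is the equality $h_\ell(z)=h_\ell(\tilde z)$: the row-column operations naturally produce coefficients written in $z$, whereas the pulled-back Levi matrix produces coefficients written in $\tilde z$, and the entry-by-entry match relies precisely on the hypothesis that $h$ does not involve $z_i$. That same hypothesis is what makes the integral expression $\tilde z_i=z_i+\int_0^{z_\ell}h_\ell(\tau)\,d\tau$ consistently reconstruct $h$ for each $\ell$ in its support, so the formulation of the lemma is self-consistent. Once this identification is pinned down, all remaining work is routine algebra and the two matrices agree on the nose.
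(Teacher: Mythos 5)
Your proposal is correct, and it establishes exactly what the lemma asserts, but it organizes the verification differently from the paper. The paper decomposes $P(z,\bar z)=\sum_t|g_t|^2$ into individual monomials $m_{t,l_1}\overline{m}_{t,l_2}$, performs the row--column operations explicitly on a representative $4\times4$ submatrix of the Levi matrix of one such monomial, matches the result against the Levi matrix of the substituted monomial $C_{t,l_1}\overline{C}_{t,l_2}(\tilde z_i-h)^{\alpha}(\bar{\tilde z}_i-\bar h)^{\hat\alpha}\prod_{\delta\neq i}z_\delta^{\alpha_\delta}\bar z_\delta^{\hat\alpha_\delta}$, and then gives a separate entry-by-entry argument that the resulting matrix is Hermitian. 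You instead work with the full matrix at once: the chain-rule identities $\partial_{\tilde z_k}=\partial_{z_k}-h_k\partial_{z_i}$ (valid because the shear $\tilde z_i=z_i+h$ has exact inverse $z_i=\tilde z_i-h(\tilde z)$ when $h$ is independent of $z_i$) give the transformed Levi matrix directly, and the single factorization $\tilde{\mathrm{A}}=E\,\mathrm{A}\,E^{*}$ with $E=I-\sum_{\ell}h_\ell\,e_\ell e_i^{T}$ simultaneously identifies this with the row--column operations and makes Hermiticity automatic, eliminating the paper's separate verification. Your explicit attention to the identity $h_\ell(z)=h_\ell(\tilde z)$ — the point on which the entry-by-entry match actually hinges — is a detail the paper passes over silently, and your argument has the minor additional benefit of not requiring the reduction to individual monomials, hence of working for any smooth defining function whose Levi form one conjugates by such a shear. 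Both routes carry out the same underlying computation; yours buys brevity and an automatic Hermiticity proof, while the paper's buys a completely concrete display of the transformed entries that it reuses later in the proof of Proposition \ref{P1}.
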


\begin{remark}
The reader should note that while only variables $z_1, \dots, z_n$ play a role in the behavior of the Levi matrix, $\C^{n+1}$ is the underlying space, so all changes of variables described in this section will take place in $\C^{n+1}$ and leave $z_{n+1}$ unchanged.
\end{remark}

\begin{proof}
Suppose that the defining function $r_0$ of the model hypersurface $M_0$ is of the form
\[  
r_0 = \text{2Re}(z_{n+1}) + \sum_{t=1}^N |g_t|^2,
\]
where $P(z,\bar z) = \sum_{t=1}^N |g_t|^2$ and $g_t = \sum_{l=1}^{b_t} m_{t,l}$ is a polynomial consisting of monomials $m_{t,l}$ in the variables $z_1, \dots, z_n$ since $P(z, \bar z)$  cannot depend on the variable $z_{n+1}.$ Let $m_{t,l} = C_{t,l} \prod_{\delta = 1}^n z_{\delta}^{{\alpha}^{t,l}_{\delta}}$ with $C_{t,l} \in \C.$ For each $t$ and for $l_1, l_2 \in \{1, \dots, b_t \},$ every monomial from the expansion of $|g_t|^2$ can be written as 
\[
m_{t,l_1}\overline{m}_{t,l_2} = C_{t,l_1} \overline{C}_{t,l_2} \prod_{\delta =1}^n z_{\delta}^{{\alpha}^{t,l_1}_{\delta}} \bar z_{\delta}^{\hat{\alpha}^{t,l_2}_{\delta}}. 
\]
By writing each term $m_{t,l_1}\overline{m}_{t,l_2}$ for all $t$ in the new coordinates, we obtain $P(z, \bar{z})$ in the new coordinates. Hence it suffices to show that applying the specified elementary row and column operations to the Levi matrix of the monomial $m_{t,l_1}\overline{m}_{t,l_2}$ corresponds to the polynomial transformation  $\tilde{z}_i = z_i + h; \:\: \tilde z_{\omega}=z_{\omega} \:\: \text{for} \:\: \omega \neq i.$

Denote by D the $(i,j,k,u)$ submatrix of the Levi matrix of the monomial $m_{t,l_1}\overline{m}_{t,l_2},$ and let $\text{D} = (d_{e \bar{\kappa}})_{e,\kappa = i,j,k,u},$ where $d_{e \bar{\kappa}}$ is given by 
\[ 
d_{e \bar{\kappa}} = C_{t,l_1} \overline{C}_{t,l_2} \alpha_{e}^{t,l_1} \hat{\alpha}_{\kappa}^{t,l_2} z_{e}^{{\alpha}^{t,l_1}_{e} -1} \bar z_{e}^{\hat{\alpha}^{t,l_2}_{e}} z_{\kappa}^{{\alpha}^{t,l_1}_{\kappa}} \bar z_{\kappa}^{\hat{\alpha}^{t,l_2}_{\kappa} - 1} \prod_{\substack{\delta = 1 \\ \delta \neq e, \kappa}}^n z_{\delta}^{{\alpha}^{t,l_1}_{\delta}} \bar z_{\delta}^{\hat{\alpha}^{t,l_2}_{\delta}}.  
 \] Let $h = C z_{a_1}^{{\beta}_{a_1}}, \dots, z_{a_s}^{{\beta}_{a_s}},$ where $C \in \C,$ $\beta = (\beta_1, \dots, \beta_n)$ is a multiindex, and $a_1,\dots, a_s \in \{1, \dots, n \}\setminus{\{i \}}.$ Now, assume that $j,k \in \{a_1, \dots, a_s\}$ with $j \neq k$ and $u \notin \{a_1, \dots, a_s\}.$ Perform the elementary operations $\text{R}_{\ell} - h_{\ell} \text{R}_i \rightarrow \text{R}_{\ell}$ and $ \text{C}_{\ell}  - \bar{h}_{\ell}\text{C}_i \rightarrow \text{C}_{\ell}$ for all variables $z_{\ell}$ in $h$ on D to get 
\[
\begin{pmatrix}
 d_{i \bar{\imath}} & d_{i \bar{\jmath}} - \bar h_j d_{i \bar{\imath}} &   d_{i \bar{k}} - \bar h_k d_{i \bar{\imath}} & d_{i \bar u} \vspace{0.5cm}\\
  d_{j \bar{\imath}} - h_j d_{i \bar{\imath}}  &d_{j \bar{\jmath}} - h_j d_{i \bar{\jmath}}- \bar h_j d_{j \bar{\imath}}+ |h_j|^2 d_{i \bar{\imath}} & d_{j \bar{k}} - h_j d_{i \bar{k}}- \bar h_k d_{j \bar{\imath}}+ h_j \bar h_k d_{i \bar{\imath}} & d_{j \bar{u}} - h_j d_{i \bar{u}}  \vspace{0.5cm} \\ 
 d_{k \bar{\imath}} - h_k d_{i \bar{\imath}} & d_{k \bar{\jmath}} - \bar{h}_jd_{k \bar{\imath}}- h_k d_{i \bar{\jmath}}+ \bar h_jh_k d_{i \bar{\imath}} & d_{k \bar{k}} - h_k d_{i \bar{k}}- \bar h_k d_{k \bar{\imath}}+ |h_k|^2 d_{i \bar{\imath}}& d_{k \bar{u}} -  h_k d_{i \bar{u}} \vspace{0.5cm}\\
 d_{u \bar{\imath}} & d_{u \bar{\jmath}} - \bar h_j d_{u \bar{\imath}} & d_{u \bar{k}} - \bar h_k d_{u \bar{\imath}} & d_{u \bar{u}}
\end{pmatrix}.
\]
The matrix above is the Levi matrix for the monomial in the new coordinates 
\[
\tilde m_{l_1} \overline{\tilde m}_{l_2} = C_{t,l_1} \overline{C}_{t,l_2} (\tilde z_i - h )^{{\alpha}^{t,l_1}_e} (\bar{\tilde z}_i - \bar h)^{\hat{\alpha}^{t,l_2}_e} \prod_{\delta \neq i} z_{\delta}^{{\alpha}^{t,l_1}_{\delta}} \bar z_{\delta}^{\hat{\alpha}^{t,l_2}_{\delta}},
\]
which is obtained after applying the polynomial transformation $\tilde{z}_i = z_i + h; \:\: \tilde z_{\omega}=z_{\omega}$ for $\omega \neq i$ to $m_{t,l_1}\overline{m}_{t,l_2}.$

\medskip We now prove that the matrix $\tilde{\text{A}}$ is Hermitian. Since the matrix A is Hermitian, we will show that applying the operation $\text{R}_{\ell} - h_{\ell} \text{R}_i \rightarrow \text{R}_{\ell}$ and $ \text{C}_{\ell}  - \bar{h}_{\ell}\text{C}_i \rightarrow \text{C}_{\ell}$ to A gives a matrix that is Hermitian as well. Let $\text{A}=(a_{k \bar{l}})_{1 \leq k,l \leq n}$ be the Levi matrix. Apart from row $\ell$ and column $\ell,$ there is no change to A, which is Hermitian. Let $a_{\ell \bar{k}}$ be an entry in row $\ell.$ Then the entry $a_{k \bar{\ell}}$ is in column $\ell$ and satisfies the property that $a_{\ell \bar{k}} = \bar{a}_{k \bar{\ell}}.$ Performing the elementary operations $\text{R}_{\ell} - h_{\ell} \text{R}_i \rightarrow \text{R}_{\ell}$ and $ \text{C}_{\ell}  - \bar{h}_{\ell}\text{C}_i \rightarrow \text{C}_{\ell}$ on A gives a new matrix $\tilde{\text{A}}$ with $a_{\ell \bar{k}} - h_{\ell} a_{i \bar{k}}$ in row $\ell$ and $a_{k \bar{\ell}}  - \bar{h}_{\ell} a_{k \bar{\imath}}$ in column $\ell$. Now 
\[ a_{\ell \bar{k}} - h_{\ell} a_{i \bar{k}} = \bar{a}_{k \bar{\ell}} - \bar{\bar{h}}_{\ell} \bar{a}_{k \bar{\imath}} = \overline{a_{k \bar{\ell}} - \bar{h}_{\ell} a_{k \bar{\imath}}}.\]
Thus, the new matrix $\tilde{\text{A}}$ is Hermitian as well.
\end{proof}

\bigskip

Proposition \ref{P1} stated in the introduction provides a general condition for the existence of an allowable polynomial transformation via the elementary row and column operations performed on the Levi matrix of a leading polynomial at some step of the Kol\' a\v r algorithm. It turns out that the restrictive definition of dependency mentioned in the introduction is a necessary and sufficient condition for the existence of allowable polynomial transformations. At this point, we give the proof of Proposition \ref{P1} and remark that the proof is constructive in the sense that we will show the allowable polynomial transformation on $P_j$ arises as a composition of polynomial transformations corresponding to elementary row and column operations on $\text{A}_{P_j}.$

\bigskip

\noindent \textbf{Proof of Proposition \ref{P1}:} Let $k$ be given, and denote by $R_k$ and $C_k$ the $k$-th row and $k$-th column of the matrix $\text{A}_{P_j}$ respectively. Suppose that  the $k$-th row of $\text{A}_{P_j}$ is dependent. This implies that $C_k$ must also be dependent since $\text{A}_{P_j}$ is Hermitian. Hence we can write both $R_k$ and $C_k$ respectively as:
\begin{equation} \label{dp2}
R_k = \sum_{\substack{l=1 \\
            l \neq k}}^n \beta^l R_l \ \text{and} \ C_k = \sum_{\substack{l=1 \\
            l \neq k}}^n \bar{\beta}^l C_l,
\end{equation}
where $\beta^l \in \C[z]$ for every $l$, $1 \leq l \leq n.$ As proven in Theorem \ref{thm1}, the leading polynomial $P_j$ is a sum of squares, and so we write $P_j = \sum_{s=1}^m |\phi^s|^2,$ where each $\phi^s$ is a nonzero polynomial with vanishing order greater than or equal to 1. Denote by $a_{k \bar t}$ the entry in the $(k, \bar t)$ position of the matrix $\text{A}_{P_j},$ for $t = 1, \dots, n.$ Therefore, for $k\neq l$ the entries in $R_k$ and $R_l$ are given by
\begin{equation} \label{dp3}
a_{k \bar t} = \sum_{s=1}^m \phi^s_k \bar{\phi}^s_t \quad \text{and} \quad a_{l \bar t} = \sum_{s=1}^m \phi^s_l \bar{\phi}^s_t,
\end{equation}
and the entries in $C_k$ and $C_l$ are given by 

\begin{equation} \label{dp4}
a_{t \bar k} = \sum_{s=1}^m \phi^s_t \bar{\phi}^s_k  \quad \text{and} \quad a_{t \bar l} = \sum_{s=1}^m \phi^s_t \bar{\phi}^s_l 
\end{equation}
respectively, where $\displaystyle \phi^s_j=\frac{\partial \phi^s}{\partial z_j}.$

We will show that there exist some elementary row and column operations that make $R_k$ to be identically zero and also make every monomial in $\text{A}_{P_j}$ to be independent of the variable $z_k.$

To that end, we see from \eqref{dp2} that every pair $\text{R}_k - \beta^l \text{R}_l; \ \text{C}_k -  \bar{\beta}^l \text{C}_l$ for which the polynomial $\beta^l$ is nonzero requires corresponding elementary row and column operations in the exact form expressed in the pair. If $\beta^l = 0$ or $R_l \equiv 0,$ then no elementary row or column operation is required. Therefore, assume that $\beta^l$ is nonzero and that $R_l$ is not identically zero. Let $\beta^l(\zeta_k)$ be $\beta^l$ with each factor of $z_k$ replaced by a factor of $\zeta_k.$ Then $\displaystyle \int_0^{z_k} \beta^l (\zeta_k) \ d \zeta_k$ must contain the variable $z_k$ together with all the other variables in $\beta^l.$ Recall that $\beta^l \in \C[z],$ so $\beta^l$ does not depend on any variable $\bar z_\nu.$ Hence we shall investigate all monomials in $\text{A}_{P_j}$ containing the variable $z_k.$  

We recall at this point that by applying the operator $\partial_{z_k}\partial_{\bar z_t}$ to $P_j,$ we obtain in row $R_k$ the derivatives of all monomials containing the variable $z_k.$ Let $a_{k \bar t}$ be an entry in row $R_k.$ Now, because $R_k$ is dependent, every monomial $u$ in $a_{k \bar t}$ arises as the product of a monomial $p$ in $\beta^l$ for some $l$ with a monomial $q$ in entry $a_{l \bar t}.$ So $u = pq,$ but since $u$ comes from differentiation by $\partial_{z_k}\partial_{\bar z_t},$ $P_j$ must contain a monomial $ m = u z_k \bar z_t.$ If $u \in \C,$ then no entries in $\text{A}_{P_j},$ except for those in $R_k,$ contain derivatives from $ m.$ If $u$ has positive vanishing order, then $u$ depends on at least one variable $z_{\nu}$   or $\bar z_\nu$ for some $\nu.$ Since $P_j$ is real-valued, it contains both $m$ and $\bar{m}.$ Therefore, without loss of generality, we can assume $u$ depends on $z_\nu;$ otherwise, we work with $\bar{u}.$ Since $u$ depends on $z_\nu,$ the entry $a_{\nu \bar t}$ in the $\nu$-th row $R_{\nu}$ contains the monomial $\partial_{z_{\nu}} \partial_{\bar z_t} m \neq 0,$ which has at least one factor of $z_k.$ We seek to eliminate all such monomials containing variable $z_k$ from the matrix $\text{A}_{P_j}.$

Set $\displaystyle \gamma^l = \int_0^{z_k} \beta^l(\zeta_k) \ d \zeta_k,$ and let $\gamma^l = \sum_{b=1}^e m^{l,b},$ where $m^{l,b}$ is a nonzero monomial containing the variable $z_k$ for all $b \geq 1.$ 
We recall from Lemma \ref{rcoperationi}  that for any nonzero monomial $m$ in the leading polynomial, if we perform the pair of elementary operations $\text{R}_{\nu} - \partial_{z_{\nu}} m \text{R}_{\ell} \rightarrow \text{R}_{\nu}$ and $ \text{C}_{\nu} -  \partial_{\bar z_{\nu}} \bar m \text{C}_{\ell} \rightarrow \text{C}_{\nu}$ for all variables $z_{\nu}$ in $m,$ then this pair corresponds to the polynomial transformation $\displaystyle \tilde z_{\ell} = z_{\ell} + \int_0^{z_\nu} \partial_{\tau} m(\tau) \ d\tau = z_{\ell} + m; \:\: \tilde z_{\omega} = z_{\omega}$ for $ \omega \neq \ell ,$ where $\partial_{\tau} m(\tau)$ is $\partial_{z_{\nu}} m$ with each factor of $z_\nu$ replaced by a factor of $\tau.$

Now, for each monomial $m^{l,b}$ in $\gamma^l, \ b= 1, \dots, e,$ we perform the elementary row and column operations $\text{R}_{\nu} - \partial_{z_{\nu}} m^{l,b} \text{R}_l \rightarrow \text{R}_{\nu}$ and $ \text{C}_{\nu} -  \partial_{\bar z_{\nu}} \bar{m}^{l,b} \text{C}_l \rightarrow \text{C}_{\nu}$ for every variable $z_{\nu}$ in $m^{l,b}.$ The composition of all of these polynomial transformations $\mathcal{S}$ is given by $\tilde{z}_l = z_l + \gamma^l$ for every $l$ such that $\beta^l \neq 0$ and $\tilde z_{\omega} = z_{\omega}$ for all $\omega \neq l,$ where $1\leq \omega\leq n+1.$ Note that \eqref{dp2} implies that $\gamma^l$ has the same weight as $z_l$ in $\Lambda_j$ because $P_j$ only contains terms of weight 1 with respect to $\Lambda_j,$ so $\mathcal{S}$ is $\Lambda_j$-homogeneous as needed.

After all the elementary row and column operations corresponding to the polynomial transformation $\mathcal{S}$ have taken place, the entries in $R_k$ are 
\begin{equation} \label{dp5}
a^{'}_{k \bar t} = \sum_{s=1}^m \phi^s_k \bar{\phi}^s_t - \sum_{l=1}^n \beta^l \Big ( \sum_{s=1}^m \phi^s_l \bar{\phi}^s_t \Big) = a_{k \bar t} - \sum_{l=1}^n \beta^l a_{l \bar t}\equiv 0
\end{equation}
as a consequence of \eqref{dp2}. A similar argument holds for the entries in $C_k,$ which we denote by $a^{'}_{t \bar k},$ namely \eqref{dp2} implies that $a^{'}_{t \bar k} \equiv 0.$ Therefore, all entries in the $k$-th row and column of the matrix $\text{A}_{P_j}$ are identically zero after the change of variables $\mathcal{S}$ has been performed. Now, assume that the leading polynomial $P_j$ still contains the variable $z_k$ after the given change of variables has been performed on it and some cancellation occurs. Since the leading polynomial $P_j$ is a sum of squares, in the expansion of $P_j$ besides the cross terms, which could possibly cancel each other, we would have at least two squares of monomials containing $z_k.$ From the above discussion, it is clear that by performing these elementary row and column operations on $\text{A}_{P_j},$ all monomials  containing the variable $z_k$ in any of its entries will have been eliminated including any contribution from those squares. Thus, $P_j$ could not possibly have contained the variable $z_k,$ so $\mathcal{S}$ is an allowable polynomial transformation with respect to the variable $z_k.$

\medskip Conversely, suppose that 
there exists an allowable polynomial transformation on $P_j$ with respect to the variable $z_k,$ and let $\mathcal{T}$ be this polynomial transformation, which we shall express as: 
\begin{equation} \label{pd1}
\tilde z_i = z_i + \gamma^i, 
\end{equation}
for $i= 1, \dots, n+1,$ where some of the $\gamma^i$ may be zero. We note here that the transformation $\mathcal{T}$ is a $\Lambda_j$-homogeneous transformation, and so $\gamma^i$ has the same weight with respect to $\Lambda_j$ as $z_i.$ Furthermore, we note that any $\Lambda_j$-homogeneous transformation can be written in this form.

We will prove that the $k$-th row $R_k$ is dependent by showing that it satisfies the condition given in \eqref{dp2}. Assume that the variable $z_k$ is contained in $\gamma^i$ for  some $i \in \{1, \dots, d\}$ with $d \leq n.$ We know that each $\tilde z_i$ corresponds to the row and column operations 
\begin{equation} \label{pd2}
\text{R}_k - \gamma^i_k \text{R}_i \rightarrow \text{R}_k \quad \text{and} \quad \text{C}_k -  \bar{\gamma}^i_k \text{C}_i \rightarrow \text{C}_k
\end{equation}
respectively, for $i= 1, \dots, d,$ where $\gamma^i_k = \partial_{z_k} \gamma^i \neq 0.$ Let $\tilde P_j$ be the leading polynomial $P_j$ after the polynomial transformation $\mathcal{T}$ is applied to it. Since $\tilde P_j$ does not contain the variable $z_k,$ the entries $\tilde h_{k \bar t}$ of $R_k$ and $\tilde h_{t \bar k}$ of $C_k$ of the matrix $\text{A}_{\tilde{P}_j}$ for all $t= 1, \dots, n$ are zero entries.

Now, by simply reversing the signs involved in the elementary operations in \eqref{pd2}, we can restore $R_k$ and $C_k$ to their previous forms before the transformation $\mathcal{T}$ was applied to $P_j.$ Hence by performing the elementary row and column operations
$$\text{R}_k + \gamma^i_k \text{R}_i \rightarrow \text{R}_k \quad \text{and} \quad  \text{C}_k + \bar{\gamma}^i_k \text{C}_i \rightarrow \text{C}_k$$ for all $i = 1, \dots, d$ on the matrix  $\text{A}_{\tilde{P}_j},$ the entries in $R_k$ and $C_k$ become 

\begin{equation} \label{pd3}
h_{k \bar t} = \sum_{i=1}^d \gamma^i_k h_{i \bar t} \quad \text{and} \quad h_{t \bar k} = \sum_{i=1}^d \bar{\gamma}^i_k h_{t \bar{\imath}},
\end{equation}
where $h_{i \bar t}$ and $h_{t \bar{\imath}}$ are the entries in the $i$-th row $R_i$ and $i$-th column $C_i$ respectively. Finally, we obtain that

\begin{equation} \label{pd4}
h_{k \bar t} = \sum_{i=1}^n \gamma^i_k h_{i \bar t} \quad \text{and} \quad h_{t \bar k} = \sum_{i=1}^n \bar{\gamma}^i_k h_{t \bar{\imath}},
\end{equation}
where $\gamma^i_k = 0$ for all $i = d+1, \dots, n.$ Thus, both $R_k$ and $C_k$ are dependent, proving the statement. 
\qed

\vspace{0.5cm}

It is important to note that for any $k,$ if the diagonal $(k, \bar k)$ entry of $A_{P_j}$ is the only nonzero entry in its $k$-th row, then the $k$-th row cannot be dependent, where $A_{P_j}$ is the Levi matrix of the leading polynomial $\text{P}_j.$ This statement holds because we are working with a sum of squares domain and $A_{P_j}$ is Hermitian.

\begin{lemma} \label{newlemma2}
Let $\Gamma$ be the set of $n \times n$ matrices with coefficients in the ring $\C[z, \bar{z}].$ Let $\text{H} \in \Gamma$ be Hermitian. For some given $i$ and $k,$ let $\text{B}$ be the matrix obtained from H after the elementary row and column operations $\text{R}_k + \alpha \text{R}_i \rightarrow \text{R}_k$ and $ \text{C}_k + \bar{\alpha} \text{C}_i \rightarrow \text{C}_k,$ for some $\alpha \in \C[z],$ are performed on it. Then $\det(\text{B}) = \det(\text{H}).$
\end{lemma}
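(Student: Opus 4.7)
The plan is to reduce the lemma to two applications of the standard determinant invariance under the elementary operation ``add a scalar multiple of one row (or column) to another,'' which holds over any commutative ring and in particular over $\C[z,\bar z]$.

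First, I would recall that the determinant is a multilinear, alternating function of the rows of a matrix with entries in a commutative ring. Let $H'$ denote the matrix obtained from $H$ after performing only the row operation $R_k + \alpha R_i \rightarrow R_k$. By multilinearity of $\det$ in the $k$-th row,
\begin{equation*}
\det(H') \;=\; \det(H) \;+\; \alpha \cdot \det(H^{(i,k)}),
\end{equation*}
where $H^{(i,k)}$ is the matrix obtained from $H$ by replacing its $k$-th row by a copy of its $i$-th row. Since $i \neq k$ (the operation only makes sense in this case; if $i=k$ the statement is trivial because one verifies $\det(B) = (1+\alpha)^2 \det(H)$ is not the claim, so we may assume $i \neq k$), the matrix $H^{(i,k)}$ has two identical rows, and by the alternating property $\det(H^{(i,k)}) = 0$. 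Hence $\det(H') = \det(H)$.

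Next, I would apply the same argument to the column operation $C_k + \bar\alpha C_i \rightarrow C_k$ performed on $H'$ to obtain $B$. Using the corresponding multilinearity and alternating property in the columns, with $\bar\alpha \in \C[z,\bar z]$, we get $\det(B) = \det(H') + \bar\alpha \cdot \det(\tilde H^{(i,k)})$, where $\tilde H^{(i,k)}$ has two equal columns and therefore vanishing determinant. Thus $\det(B) = \det(H') = \det(H)$.

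There is essentially no obstacle here: the only point requiring a brief mention is that $\C[z,\bar z]$ is a commutative ring, so the Leibniz formula for the determinant is well defined and both multilinearity and the alternating property hold. The Hermiticity of $H$ and the particular form $\bar\alpha$ (conjugate of $\alpha \in \C[z]$) play no role in the argument; they are relevant only to ensure that $B$ remains Hermitian, which is used elsewhere in the paper but not needed for the determinant identity itself.
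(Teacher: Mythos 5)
Your proof is correct and follows essentially the same route as the paper's: split the operation into the row step and the column step, and use that each of these ``add a multiple of another row/column'' operations preserves the determinant over the commutative ring $\C[z,\bar z]$ (you merely expand the standard fact via multilinearity and the alternating property, where the paper cites it directly). No further comment is needed.
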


\begin{proof}
Let E be the matrix obtained from H by the elementary row operation $\text{R}_k + \alpha \text{R}_i \rightarrow \text{R}_k.$ Then $\text{B}$ is the matrix obtained from E by the elementary column operation $ \text{C}_k + \bar{\alpha} \text{C}_i \rightarrow \text{C}_k.$ It is obvious from the properties of the determinant that $\det(\text{H}) = \det(\text{E})$ and that $\det(\text{E}) = \det(\text{B}).$
\end{proof}

\medskip

\begin{lemma} \label{newlemma3}
Assume that the D'Angelo 1-type of $M \subset \C^{n+1}$ at 0 is finite. At step $j$ of the Kol\' a\v r algorithm 
for the computation of the multitype at 0, let $\text{P}_j$ be the leading polynomial and let $\text{Q}_j$ be the leftover polynomial.

If the determinant of $\text{A}_{P_j}$ is nonzero, then $P_j$ is independent of the largest number of variables, and no polynomial transformation needs to be performed on it before the next step in the Kol\' a\v r algorithm.
\end{lemma}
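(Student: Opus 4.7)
The plan is to argue by contradiction, using Proposition \ref{P1} as the bridge between allowable polynomial transformations and row dependency of $\text{A}_{P_j}$, and then invoking the determinant-preservation property established in Lemma \ref{newlemma2} to force a contradiction with $\det(\text{A}_{P_j}) \neq 0.$

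Assume, with $\det(\text{A}_{P_j}) \neq 0,$ that $P_j$ is not already independent of the largest possible number of variables. Then some allowable polynomial transformation on $P_j$ exists with respect to some variable $z_k.$ By Proposition \ref{P1}, this is equivalent to the $k$-th row of $\text{A}_{P_j}$ being dependent, i.e.\ there exist $\alpha_l \in \C[z],$ not all zero, with
\[
R_k = \sum_{\substack{l=1 \\ l\neq k}}^n \alpha_l R_l.
\]
Because $\text{A}_{P_j}$ is Hermitian, the corresponding $k$-th column satisfies $C_k = \sum_{l \neq k} \bar{\alpha}_l C_l.$

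I would then apply, for each $l \neq k,$ the paired elementary row and column operations $R_k - \alpha_l R_l \to R_k$ and $C_k - \bar{\alpha}_l C_l \to C_k$ to $\text{A}_{P_j}.$ Iterated application of Lemma \ref{newlemma2} shows that each such pair preserves the determinant, so the final transformed matrix has the same determinant as $\text{A}_{P_j}.$ On the other hand, carrying out all row operations first produces a matrix whose $k$-th row is precisely $R_k - \sum_{l \neq k} \alpha_l R_l \equiv 0,$ so that the transformed matrix has determinant zero. This contradicts $\det(\text{A}_{P_j}) \neq 0,$ and therefore no allowable polynomial transformation can exist with respect to any variable $z_k.$ By the definition of ``allowable'' given in the introduction, this precisely means that $P_j$ already depends on the smallest possible number of variables among all $\Lambda_j$-homogeneous transformations, i.e.\ $P_j$ is independent of the largest number of variables, and no change of coordinates on $P_j$ is required before advancing to the next step of the Kol\' a\v r algorithm.

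The only technical point, and really the main bookkeeping concern, is to verify that the sequence of paired operations indeed leaves both the new $k$-th row and $k$-th column identically zero and hence is valid to invoke Lemma \ref{newlemma2} throughout. Performing all row operations first zeros out $R_k,$ and since the subsequent column operations modify only entries in column $k,$ they cannot disturb the already-zero $k$-th row. The symmetric computation, analogous to \eqref{dp5} in the proof of Proposition \ref{P1}, shows that the column dependency relation simultaneously forces $C_k$ to become identically zero. Either observation alone suffices to conclude $\det=0,$ which closes the argument.
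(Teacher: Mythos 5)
Your argument is correct and is essentially the paper's own proof: the paper proves the contrapositive by using Proposition \ref{P1} to pass from an allowable transformation to a dependent row, zeroing out that row (and column) with paired elementary operations, and invoking Lemma \ref{newlemma2} to conclude the determinant was already zero. Your phrasing as a direct contradiction and your use of the operations $R_k - \alpha_l R_l \to R_k$, $C_k - \bar{\alpha}_l C_l \to C_k$ straight from the dependency relation is only a cosmetic variation on the same route.
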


\begin{proof}
We shall give a proof of the contrapositive of the statement of this lemma, which states that if there exists an allowable transformation and hence one of the rows of $\text{A}_{P_j}$ is dependent by Proposition~\ref{P1}, then the determinant of $\text{A}_{P_j}$ is zero. Suppose that $\mathscr{R} = \{R_1, \dots, R_n \},$ the set of all rows of the matrix $\text{A}_{{P}_j},$ is dependent and that none of the rows is identically equal to zero. Thus for some $k,$ we can write 
$$R_k = \sum_{\substack{l=1\\
                   l \neq k}}^n \alpha_l R_l,$$
where $\alpha_l \in \C[z] \ \text{and} \ R_l$  is the $l$-th row of $\text{A}_{P_j}.$ From the proof of Proposition \ref{P1}, we know that there must exist some elementary row and column operations that transform $\text{A}_{{P}_j}$ into the matrix $\tilde{\text{A}}_{P_j}$ whose $k$-th row and column have all zero entries. Since the matrix $\tilde{\text{A}}_{P_j}$ has at least one row with all entries equal to zero, its determinant equals zero. From Lemma \ref{newlemma2}, we know that $\det(\tilde{\text{A}}_{P_j}) = \det(\text{A}_{{P}_j}).$ 

Thus, $\det(\text{A}_{{P}_j}) = 0,$ which is the result we need.
\end{proof}

\vspace{0.5cm}
Given the way the leading polynomial $P_j$ and its Levi matrix $\text{A}_{P_j}$ are constructed, it is possible that $P_j$ could be independent of at least one of the variables. If that is the case, then the determinant of the Levi matrix corresponding to the leading polynomial $P_j$ will always be zero since it will have at least one row that is identically zero. Therefore, we need a way to determine when a subset of all nonzero rows of $\text{A}_{P_j}$ is independent. To address this situation, we shall consider the following:

\vspace{0.3cm}
Let $\text{A}_{P_j} = (a_{i \bar l})_{1 \leq i,l \leq n}$ be the Levi matrix of the leading polynomial $P_j$ at step $j$ of the Kol\' a\v r algorithm. Let $m$ be the number of nonzero rows of the matrix $\text{A}_{P_j}.$ Denote by $\text{A}_{P_j|m}$ the principal submatrix obtained from $\text{A}_{P_j}$ by removing all zero rows and columns to get precisely $m$ rows and columns, for some $m \leq  n.$ Put differently, $\text{A}_{P_j|m}$ is the submatrix consisting only of all nonzero rows and columns of $\text{A}_{P_j}.$ If $m = n,$ then none of the rows and columns are identically zero. 

Also, via the elementary row and column operations, $\text{A}_{P_j|m}$ can be transformed into a leading principal submatrix of $\text{A}_{P_j},$ where the first $m$ rows and columns are the ones that remain. In this case,  $\text{A}_{P_j|m} = (a_{i \bar l})_{1 \leq i,l \leq m}.$

Let $\mathscr{R} = \{R_1, \dots, R_n \}$ be the set of all rows of the matrix $\text{A}_{{P}_j}$ and let $\mathscr{S}= \{R_{b_1}, \dots, R_{b_m} \}$ be a subset of $\mathscr{R},$ for $b_e \in \{1, \dots, n \}, \ e = 1, \dots, m$ such that each element $R_{b_e}$ is not identically zero. Then $\mathscr{S}$ is the set of all non zero rows of the submatrix $\text{A}_{P_j|m}.$

We now restate Proposition \ref{P1} and Lemma \ref{newlemma3} as follows:

\begin{proposition} \label{newP}
Assume that the D'Angelo 1-type of $M \subset \C^{n+1}$ at 0 is finite. At step $j$ of the Kol\' a\v r algorithm 
for the computation of the multitype at 0, let $P_j$ be the leading polynomial, and let $Q_j$ be the leftover polynomial.

There exists an allowable polynomial transformation on $P_j$ with respect to the variable $z_k$ via the elementary row and column operations if and only if the $k$-th row of $\text{A}_{\text{P}_j|m}$ is dependent. 
\end{proposition}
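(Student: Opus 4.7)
The plan is to reduce Proposition~\ref{newP} directly to Proposition~\ref{P1} by exploiting the block structure of $\text{A}_{P_j}$. The key observation is that a variable $z_i$ fails to appear in $P_j$ precisely when the $i$-th row of $\text{A}_{P_j}$ is identically zero: writing $P_j = \sum_s |\phi^s|^2$ as in Theorem~\ref{thm5}, the entry $a_{i \bar t} = \sum_s \phi^s_i \bar{\phi}^s_t$ vanishes for every $t$ if and only if no $\phi^s$ depends on $z_i$, which is equivalent to $z_i$ not appearing in $P_j$; by Hermitian symmetry the corresponding columns also vanish. After a harmless relabeling, assume the variables appearing in $P_j$ are $z_1, \dots, z_m$, so that
\begin{equation*}
\text{A}_{P_j} = \begin{pmatrix} \text{A}_{P_j|m} & 0 \\ 0 & 0 \end{pmatrix}.
\end{equation*}
For any $k > m$ no allowable transformation with respect to $z_k$ is meaningful, since $z_k$ is already absent from $P_j$ and so any putative transformation fails to reduce the number of variables in $P_j$. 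It therefore suffices to treat $k \in \{1, \dots, m\}$.

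For the forward implication, suppose there is an allowable transformation on $P_j$ with respect to $z_k$ for some $k \leq m$. Proposition~\ref{P1} supplies polynomials $\alpha_1, \dots, \alpha_n \in \C[z]$, not all zero, with $R_k = \sum_{l \neq k} \alpha_l R_l$ in $\text{A}_{P_j}$. Since $R_l \equiv 0$ for $l > m$, these terms drop out, leaving $R_k = \sum_{l \in \{1,\dots,m\} \setminus \{k\}} \alpha_l R_l$. Truncating this identity to the first $m$ columns — which is the row-restriction that defines $\text{A}_{P_j|m}$ — preserves the equality and exhibits the $k$-th row of $\text{A}_{P_j|m}$ as dependent. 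Conversely, if the $k$-th row of $\text{A}_{P_j|m}$ satisfies a relation with polynomial coefficients, padding each row with $n-m$ trailing zeros and declaring $\alpha_l = 0$ for $l > m$ lifts this dependence to $\text{A}_{P_j}$; Proposition~\ref{P1} then delivers the required allowable polynomial transformation.

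I do not anticipate any serious obstacle: once one recognizes that zero rows and columns convey no algebraic information, the equivalence is essentially a transport of dependence relations between $\text{A}_{P_j}$ and its nonzero principal submatrix $\text{A}_{P_j|m}$. The only point deserving care is the correspondence between variables appearing in $P_j$ and nonzero rows of the Levi matrix, and this falls out immediately from the sum-of-squares representation established in Theorem~\ref{thm5}, together with the fact that $\phi^s_i \equiv 0$ for every $s$ exactly when no $\phi^s$ involves $z_i$.
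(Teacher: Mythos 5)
Your proof is correct, but it is structured differently from the paper's. The paper does not actually re-derive Proposition~\ref{newP}; it simply asserts that the proof of Proposition~\ref{P1} carries over verbatim to the submatrix $\text{A}_{P_j|m}$ ``since the latter do[es] not depend on rows being identically equal to zero.'' You instead treat Proposition~\ref{P1} as a black box and reduce the new statement to it: you first establish that, for a sum of squares, the $i$-th row of $\text{A}_{P_j}$ vanishes identically exactly when $z_i$ is absent from $P_j$ (the diagonal entry $a_{i\bar\imath}=\sum_s|\phi^s_i|^2$ makes the converse direction clean, and this genuinely uses the sum-of-squares representation from Theorem~\ref{thm5}), and then you transport dependence relations back and forth across the block decomposition $\text{A}_{P_j}=\bigl(\begin{smallmatrix}\text{A}_{P_j|m}&0\\0&0\end{smallmatrix}\bigr)$ by dropping, respectively padding with, zero coefficients and zero column entries. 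What your route buys is an actual argument where the paper only gestures, plus an explicit justification of the zero-row/absent-variable dictionary that the paper leaves implicit; what the paper's route buys is nothing beyond brevity. Two minor points worth keeping in mind: the dismissal of $k>m$ is consistent with the definition of ``allowable'' (there is no occurrence of $z_k$ to eliminate, and the $k$-th row of $\text{A}_{P_j|m}$ does not exist after the relabeling), and the truncation step in your forward implication is harmless precisely because the deleted columns are identically zero, so the dependence identity restricted to the surviving columns carries exactly the same information. I see no gap.
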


\begin{lemma} \label{newlemma5}
Assume that the D'Angelo 1-type of $M \subset \C^{n+1}$ at 0 is finite. At step $j$ of the Kol\' a\v r algorithm 
for the computation of the multitype at 0, let $P_j$ be the leading polynomial, and let $Q_j$ be the leftover polynomial. 

If the determinant of $\text{A}_{P_j|m}$ is nonzero, then $P_j$ is independent of the largest number of variables, and no polynomial transformation needs to be performed on it before the next step in the Kol\' a\v r algorithm.
\end{lemma}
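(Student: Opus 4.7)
The plan is to prove the contrapositive, along the exact same lines as Lemma \ref{newlemma3}, but now working inside the smaller submatrix $\text{A}_{P_j|m}$ rather than $\text{A}_{P_j}$. So I assume that there exists an allowable polynomial transformation on $P_j$ with respect to some variable $z_k$ and aim to derive $\det(\text{A}_{P_j|m}) = 0$.

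First I would check that $k$ actually indexes a nonzero row of $\text{A}_{P_j}$ and thus a row of $\text{A}_{P_j|m}$. By definition, an allowable transformation with respect to $z_k$ eliminates $z_k$ from $P_j$ and strictly decreases the number of variables in $P_j$, so $z_k$ must already appear in $P_j$. Writing $P_j = \sum_{s=1}^m |\phi^s|^2$ using Theorem \ref{thm1} and Theorem \ref{thm5}, at least one $\phi^s$ must depend on $z_k$, hence $\phi^s_k \neq 0$ and the diagonal entry $a_{k\bar k} = \sum_{s} |\phi^s_k|^2$ is not zero. Therefore $R_k$ is a nonzero row of $\text{A}_{P_j}$ and appears as a row of $\text{A}_{P_j|m}$.

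Next I invoke Proposition \ref{newP}: since an allowable transformation with respect to $z_k$ exists, the $k$-th row of $\text{A}_{P_j|m}$ is dependent, so there exist polynomials $\alpha_l \in \C[z]$ (with $\alpha_l \neq 0$ for at least one $l$ among the indices of the nonzero rows) such that $R_k = \sum_{l \neq k} \alpha_l R_l$ inside $\text{A}_{P_j|m}$. Following the constructive proof of Proposition \ref{P1}, I perform the corresponding pairs of elementary row and column operations $\text{R}_k - \alpha_l \text{R}_l \to \text{R}_k$, $\text{C}_k - \bar\alpha_l \text{C}_l \to \text{C}_k$ on the submatrix $\text{A}_{P_j|m}$. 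By \eqref{dp5} (applied within the submatrix), this procedure zeroes out the entire $k$-th row and column of $\text{A}_{P_j|m}$. By Lemma \ref{newlemma2} applied iteratively, each such pair of operations preserves the determinant, so the determinant of the resulting matrix equals $\det(\text{A}_{P_j|m})$. Since the new matrix has an all-zero row, its determinant is $0$, giving $\det(\text{A}_{P_j|m}) = 0$.

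The only genuine subtlety, and the one I would be most careful about, is ensuring the dependence relation promised by Proposition \ref{newP} only involves the $m$ nonzero rows of $\text{A}_{P_j}$ (equivalently, the rows of $\text{A}_{P_j|m}$): any zero rows $R_l$ of $\text{A}_{P_j}$ can be absorbed into the sum with arbitrary coefficients without affecting the relation, so we may and do restrict the dependence to rows surviving in $\text{A}_{P_j|m}$. With that observation, the row and column operations stay within $\text{A}_{P_j|m}$, Lemma \ref{newlemma2} applies directly, and the proof concludes exactly as in Lemma \ref{newlemma3}. The contrapositive statement then reads: if $\det(\text{A}_{P_j|m}) \neq 0$, no allowable polynomial transformation exists, so $P_j$ is already independent of the largest possible number of variables and no further $\Lambda_j$-homogeneous change of variables is required before proceeding to the next step of the Kol\'a\v r algorithm.
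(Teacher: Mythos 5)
Your proof is correct and follows essentially the same route as the paper, which simply declares the proof of Lemma \ref{newlemma5} to be identical to that of Lemma \ref{newlemma3} (contrapositive via Proposition \ref{newP}, zeroing out a dependent row by the row--column operations of Proposition \ref{P1}, and preserving the determinant via Lemma \ref{newlemma2}). Your added checks --- that $z_k$ indexes a nonzero row surviving in $\text{A}_{P_j|m}$ and that the dependence relation can be restricted to the rows of the submatrix --- are details the paper glosses over, and they strengthen rather than alter the argument.
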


The proofs of Proposition \ref{newP} and Lemma \ref{newlemma5} are identical to the proofs given for Proposition \ref{P1} and Lemma \ref{newlemma3} respectively since the latter do not depend on rows being identically equal to zero. We also note here that the converses of 
Lemmas \ref{newlemma3} and \ref{newlemma5} do not hold. The reason is that 
the notion of dependency given in \eqref{dp2} is more restrictive than the standard notion of dependency in linear algebra, so there might not exist a row that is dependent according to our definition, but the set of rows may satisfy the standard notion of dependency, in which case the determinant of the Levi matrix would be identically equal to zero.

Now, the natural question to ask at this point is this: Given a Levi matrix of a leading polynomial with zero determinant, how can we tell whether or not it has dependent rows? Also, if there exist dependent rows, how can we identify such rows in order to determine the allowable polynomial transformations corresponding to these dependent rows? The answers to these questions lie in the formulation of an algorithm, which we will describe in the next subsection.
 
\bigskip

\subsection{Jacobian Modules and Gradient Ideals}
A study of the elementary row and column operations on the Levi matrix reveals that  a row (column) operation on the Levi matrix is performed by a multiplication on the left (right) of the Levi matrix by an elementary row (column) matrix. The Levi matrix of a sum of squares domain can always be decomposed as the product of the complex Jacobian matrix of the holomorphic functions that generate the domain and its conjugate transpose. Therefore, every row operation on the Levi matrix could be performed on the complex Jacobian matrix, while every column operation on the Levi matrix is performed on the conjugate transpose of the complex Jacobian matrix. Let A be an $n \times n$ Levi matrix of a domain given by the sum of squares of $N$ holomorphic functions. Then elementary matrices are $n \times n$ matrices, while the complex Jacobian matrix and its conjugate transpose must be $n \times N$ and $N \times n$ matrices respectively. 

In our study of elementary row and column operations performed on the Levi determinant of a sum of squares domain, one particular property of the Levi matrix of a square of one of the generators drew our attention: All entries of any given column (row) have the same anti-holomorphic (holomorphic) parts, and so a study of the relationship between these entries narrows down to a study of the relationship between their holomorphic (anti-holomorphic) parts. In other words, we expect that the study of the Levi matrix will be much easier if we transition from the sum of squares to the underlying ideal of holomorphic functions that generate the domain as we already saw was the case for the computation of the multitype.

We start with a couple of definitions that we specialize to complex polynomials since those are the objects that appear in the Kol\' a\v r algorithm when it is applied to a sum of squares domain instead of the full holomorphic generators: 

\begin{definition} \label{gradid}
Let $f \in \C[z_1, \dots, z_n]$ be polynomial in the variables $z_1, \dots, z_n$ with coefficients in $\C.$ As in \cite{NDS06}, we define the \textit{gradient ideal of f} as the ideal generated by the partial derivatives of $f:$
\begin{equation} \label{gi1}
\mathcal{I}_{grad}(f) = \left <\nabla f \right > = \left <\frac{\partial f}{\partial z_1}, \cdots, \frac{\partial f}{\partial z_n} \right >.
\end{equation}
\end{definition}

\begin{definition} \label{gradid2}
Given the ideal $\left<f\right> = \left<f_1, \dots, f_N\right> \subset \C[z_1, \dots, z_n],$ we define the \textit{Jacobian module of $f$} as 
\begin{equation} \label{gi2}
\mathfrak{J}_{\left<f\right>} = \left[\frac{\partial f}{\partial z_1}, \cdots, \frac{\partial f}{\partial z_n} \right],
\end{equation}
where each $\frac{\partial f}{\partial z_j}$ is a vector. $\mathfrak{J}_{\left<f\right>}$ is a module over the polynomial ring $\C[z_1, \dots, z_n].$
\end{definition}
\noindent To every Jacobian module $\mathfrak{J}_{\left<f\right>},$ we associate the complex Jacobian matrix $\text{J}(f)$ given by
\begin{equation} \label{ji}
\text{J}(f) = \begin{pmatrix}
\frac{\partial f_1}{\partial z_1} & \frac{\partial f_2}{\partial z_1}& \cdots & \frac{\partial f_N}{\partial z_1}\\
\vdots & \vdots & \ddots & \vdots \\
\frac{\partial f_1}{\partial z_n} & \frac{\partial f_2}{\partial z_n} & \cdots & \frac{\partial f_N}{\partial z_n}
\end{pmatrix}.
\end{equation}
Likewise, to each gradient ideal $\mathcal{I}_{grad}(f_i) = \left <\frac{\partial f_i}{\partial z_1}, \cdots, \frac{\partial f_i}{\partial z_n} \right >$ of the generator $f_i \in \C[z_1, \dots, z_n]$ of $\left<f\right>,$ we associate the $i$-th column of $\text{J}(f)$ for $1 \leq i \leq n.$ The reader should note that row operations on $\text{J}(f)$ are precisely operations on the module $\mathfrak{J}_{\left<f\right>}.$

\medskip\noindent Now, let $\left<f\right> = \left<f_1, \dots, f_N\right> \subset \C[z_1, \dots, z_n]$ be the leading polynomial ideal at some step of the Kol\' a\v r algorithm. Then we are particularly interested in simplifying the Jacobian module $\mathfrak{J}_{\left<f\right>}$ such that it is generated by the minimal number of generators. Every generator that is eliminated is a linear combination of some partial derivatives of $f$ with coefficients in $\C[z_1, \dots, z_n].$ Since every generator of the Jacobian module represents a row of the complex Jacobian matrix, every eliminated generator represents a dependent row in the complex Jacobian matrix. Owing to this connection, from every eliminated generator, we can construct a sequence of elementary row operations that corresponds to the linear combination of some partial derivatives of $f$ as described in Proposition \ref{P1}. Hence we obtain polynomial transformations corresponding to these row operations.
 
It is easy to observe this relationship if $N=1.$ Then reducing the number of generators of $\mathfrak{J}_{\left<f\right>},$ if possible, reduces the number of nonzero rows of the associated complex Jacobian matrix. Thus, the minimal number of generators required to generate the Jacobian module is precisely the number of independent rows of the complex Jacobian matrix, which is the same as the number of variables on which the corresponding leading polynomial ideal $\left<f\right>$ is dependent by Proposition \ref{P1} after the corresponding change of variables. Hence the number $d_j$ at step $j$ of the Kol\' a\v r algorithm is given by $d_j=n -\#f ,$ where $\#f$ is the minimal number of generators generating the Jacobian module $\mathfrak{J}_{\left<f\right>},$ $n$ is the number of variables in the polynomial ring $\C[z_1, \dots, z_n],$ and $d_j$ is the largest number of variables of which the leading polynomial at step $j$ is independent. This gives an algebraic characterization of the number $d_j$ in the Kol\' a\v r algorithm.

In the more general case where $N > 1,$ reducing the number of generators of the Jacobian module implies reducing the generators of all gradient ideals by the same operations. Thus, $\frac{\partial f}{\partial z_{\ell}},$ for some $\ell,$ is a generator that is eliminated in the Jacobian module $\mathfrak{J}_{\left<f\right>}$ if and only if the generator $\frac{\partial f_i}{\partial z_{\ell}}$ of the gradient ideal $\mathcal{I}_{grad}(f_i)$ for all $i = 1, \dots, N$ is eliminated, namely reduced to 0. 

Clearly, if there exists at least one gradient ideal $\mathcal{I}_{grad}(f_i)$ with minimal number of generators equal to $n,$ then the Jacobian module $\mathfrak{J}_{\left<f\right>}$ cannot have fewer than $n$ generators, i.e. no reduction via row operations is possible.

In this set-up, all elementary pairs of row-column operations on the Levi matrix reduce to just elementary row operations on the complex Jacobian matrix.

\subsection{Row Reduction Algorithm}

We shall now devise an algorithm that constructs explicitly the polynomial transformations required at each step of the Kol\' a\v r algorithm when applied to the complex Jacobian matrix of the leading polynomial ideal.

The algorithm gives the conditions for characterizing the required elementary row operations that correspond to the polynomial transformations needed in the Kol\' a\v r algorithm. The application of the algorithm to the complex Jacobian matrix corresponding to a given leading polynomial ideal will eliminate all dependent rows, if they exist, from the complex Jacobian matrix.

Now, for any $j\geq 1,$ let $\text{J}_{P_j}$ be the complex Jacobian matrix corresponding to the leading polynomial $P_j$ at step $j$ of the Kol\' a\v r algorithm.  

\medskip \noindent {\small GRADIENT IDEALS:} Let $P_j = \sum_{i=1}^N |h_i|^2.$  Then the leading polynomial ideal is given by $\mathcal{I}_{P_j} := \left <h \right > = \left <h_1, \dots, h_N \right >,$ and the gradient ideal of $h_i$ is $\mathcal{I}_{grad}(h_i) = \left <\frac{\partial h_i}{\partial z_1}, \cdots, \frac{\partial h_i}{\partial z_n} \right >, \ i = 1, \dots, N.$ Note that the complex Jacobian matrix of $P_j$ is given by $\text{J}_{P_j}=\text{J}(\mathcal{I}_{P_j})=\text{J}(h)$ and the Levi matrix $\text{A}_{P_j}$ of $P_j$ is the product of the complex Jacobian matrix of $P_j$ with its conjugate transpose $\text{J}^*(h):$ $\text{A}_{P_j} = \text{J}(h)\text{J}^*(h).$ We shall reduce, if possible, the number of generators of each gradient ideal one at a time and control the changes that occur in other gradient ideals as a result of these reduction operations. By control, we mean setting appropriate conditions on the reduction operations used. 

\medskip \noindent {\small STRUCTURE OF THE ALGORITHM:} If $\det \text{A}_{P_j}=  \det \left(\text{J}(h)\text{J}^*(h) \right)$ is nonzero, then no change of variables is required by Lemma~\ref{newlemma3}. Thus, assume that $\det \left(\text{J}(h)\text{J}^*(h)\right) = 0.$ Then:

\begin{itemize}
\item[1.] We begin the process by first considering the gradient ideal $\mathcal{I}_{grad}(h_i) = \left <\frac{\partial h_i}{\partial z_1}, \cdots, \frac{\partial h_i}{\partial z_n} \right >$ for any $i.$ Simplify the gradient ideal $\mathcal{I}_{grad}(h_i)$ such that it consists of the minimal number of generators. Assume that
\begin{equation} \label{gi4}
\frac{\partial h_i}{\partial z_k} = \sum_{u = 1}^v \gamma_{c_u} \frac{\partial h_i}{\partial z_{c_u}},
\end{equation}
for some $k, \ c_u \neq k, \ v < n,$ and $\gamma_{c_u}$ a nonzero polynomial in $\C[z_1, \dots, z_n].$ Then perform the following elementary row operations on the complex Jacobian matrix $\text{J}(h):$
\begin{equation} \label{gi5}
\text{R}_{\ell} - \frac{\partial \zeta_{c_u}}{\partial z_{\ell}} \text{R}_{c_u} \rightarrow \text{R}_{\ell},
\end{equation}
for all $u=1, \dots, v$ and for all variables $z_{\ell}$ in $\zeta_{c_u} = \int_0^{z_k} \gamma_{c_u}(t) \ dt.$ By Lemma \ref{rcoperationi}, the row operations in \eqref{gi5} correspond to the polynomial transformation given by 
\begin{equation} \label{gi6}
\tilde{z}_{c_u} = z_{c_u} + \int_0^{z_k} \gamma_{c_u}(t) \ dt; \:\: \tilde z_{\omega} = z_{\omega},
\end{equation}
for all $\omega \neq c_u$ and for all $u=1, \dots, v.$ The generator $\frac{\partial h_i}{\partial z_k}$ vanishes in $\mathcal{I}_{grad}(h_i)$ after the row operations in \eqref{gi5} are performed on $\text{J}(h).$ 

We say row $\text{R}_{c_u}$  is used as a \textit{central row} in the sequence of row operations and the generator $\frac{\partial h_i}{\partial z_{c_u}}$ is used as a \textit{central generator} in the simplification of the gradient ideal $\mathcal{I}_{grad}(h_i)$ for all $i.$ We remark here that for all subsequent row operations performed on the complex Jacobian matrix, the row $\text{R}_{c_u}$ cannot be used as a central row  and $\frac{\partial h_e}{\partial z_{c_u}}$ cannot be used as a central generator in the simplification of any other gradient ideal $\mathcal{I}_{grad}(h_e)$ for $e \neq i.$ This condition is imposed due to Proposition ~\ref{P1}, which is an equivalence. Reusing a central row or a central generator might reintroduce a variable that has been eliminated from the leading polynomial.

\item[2.] Next, consider another gradient ideal $\mathcal{I}_{grad}(h_s) = \left <\frac{\partial h_s}{\partial z_1}, \cdots, \frac{\partial h_s}{\partial z_n} \right >$ for $s \neq i.$ Clearly, the $k$-th generator of this gradient ideal is  
\begin{equation} \label{gi7}
\frac{\partial h_s}{\partial z_k} - \sum_{u = 1}^v \gamma_{c_u} \frac{\partial h_s}{\partial z_{c_u}}
\end{equation}
due to the row operations given in \eqref{gi5}. We simplify the ideal $\mathcal{I}_{grad}(h_s)$ such that it has the minimal number of generators while ensuring that the generators $\frac{\partial h_s}{\partial z_{c_u}}$ for $u = 1, \dots, v$ are not used as central generators in the simplification of $\mathcal{I}_{grad}(h_s).$ Perform the related row operations.

\item[3.] Proceed similarly by considering other gradient ideals different from the previous ones. Since there are only finitely many gradient ideals and finitely many generators that generate each of them, the process will terminate after a finite number of steps. 

\end{itemize}

\vspace{0.5cm}
We will show in the lemma that follows that the polynomial transformation in \eqref{gi6} corresponding to the row operations given in \eqref{gi5} is $\Lambda_j$-homogeneous. Thus, we state the following:

\begin{lemma} \label{htrans}
Assume that the D'Angelo 1-type of $M \subset \C^{n+1}$ at 0 is finite. At step j of the Kol\' a\v r algorithm for the computation of the multitype at $0,$ let $\Lambda_j$ be the weight, $P_j$ the leading polynomial, and $\mathcal{I}_{P_j}$ the corresponding leading polynomial ideal. Let $\mathcal{I}_{grad}(\psi)$ be the gradient ideal of some generator $\psi$ of the ideal $\mathcal{I}_{P_j}.$ Assume that 
\begin{equation} \label{ht1}
\frac{\partial \psi}{\partial z_k} = \sum_{u = 1}^v \gamma_{c_u} \frac{\partial \psi}{\partial z_{c_u}},
\end{equation}
for some $k,$ where $k \neq c_u, \ v < n,$ and $\gamma_{c_u}$ is a nonzero polynomial in $\C[z_1, \dots, z_n].$ Let $\zeta_{c_u} = \int_0^{z_k} \gamma_{c_u}(t) \ dt.$

Then the polynomial transformation given by $\tilde{z}_{c_u} = z_{c_u} + \zeta_{c_u}; \:\: \tilde z_{\omega} = z_{\omega}$ for all $\omega \neq c_u$ corresponding to the elementary row operations $\text{R}_{\ell} - \frac{\partial \zeta_{c_u}}{\partial z_{\ell}} \text{R}_{c_u} \rightarrow \text{R}_{\ell}$ for all variables $z_{\ell}$ in $\zeta_{c_u}$ and for all $u=1, \dots, v$ performed on the complex Jacobian matrix $\text{J}_{P_j}$ is $\Lambda_j$-homogeneous for all $u = 1, \dots, v.$
\end{lemma}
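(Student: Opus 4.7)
The plan is to show that $\zeta_{c_u}$ is a $\Lambda_j$-homogeneous polynomial of weighted degree $\mu_{c_u}$, which is exactly the weight assigned to $z_{c_u}$ by $\Lambda_j$. By Definition~\ref{def5}(i), this is precisely what is required for the change of variables $\tilde z_{c_u} = z_{c_u} + \zeta_{c_u}$, $\tilde z_{\omega} = z_{\omega}$ for $\omega \neq c_u$, to be $\Lambda_j$-homogeneous. The first step is to invoke Corollary~\ref{C1}: since $\psi$ is a generator of the leading polynomial ideal $\mathcal{I}_{P_j}$, every monomial of $\psi$ has weighted degree exactly $\tfrac{1}{2}$ with respect to $\Lambda_j$, so $\psi$ itself is $\Lambda_j$-homogeneous of weighted degree $\tfrac{1}{2}$.

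Next I would record the elementary fact that differentiating a $\Lambda_j$-homogeneous polynomial of weighted degree $d$ with respect to $z_i$ produces a $\Lambda_j$-homogeneous polynomial of weighted degree $d - \mu_i$, as can be seen monomial-by-monomial. Applied to $\psi$, this gives that $\partial \psi / \partial z_k$ is $\Lambda_j$-homogeneous of weighted degree $\tfrac{1}{2} - \mu_k$, while each $\partial \psi/ \partial z_{c_u}$ is $\Lambda_j$-homogeneous of weighted degree $\tfrac{1}{2} - \mu_{c_u}$.

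The crux is then weight-matching in the relation~\eqref{ht1}. The left-hand side lives entirely in the $\Lambda_j$-homogeneous component of weighted degree $\tfrac{1}{2} - \mu_k$, so on the right-hand side only the $\Lambda_j$-homogeneous component of $\gamma_{c_u}$ of weighted degree $\mu_{c_u} - \mu_k$ can actually contribute; the remaining components must sum to zero and can be discarded without altering~\eqref{ht1}. Without loss of generality, then, each $\gamma_{c_u}$ may be replaced by its $\Lambda_j$-homogeneous component of weighted degree $\mu_{c_u} - \mu_k$ (which must be non-negative, else the contribution is identically zero and that term is absent from the algorithm). Since integrating with respect to $z_k$, a variable of weight $\mu_k$, raises the weighted degree by exactly $\mu_k$, we obtain
\[
\text{weighted degree of } \zeta_{c_u} \;=\; (\mu_{c_u} - \mu_k) + \mu_k \;=\; \mu_{c_u},
\]
as required, and $\zeta_{c_u}$ is moreover $\Lambda_j$-homogeneous because it is the integral of a $\Lambda_j$-homogeneous polynomial.

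The step that needs the most care is the reduction to a homogeneous $\gamma_{c_u}$: this relies on the observation that the row reduction algorithm operates on a graded object (the Jacobian module of a $\Lambda_j$-homogeneous ideal), so any polynomial identity expressing one gradient generator in terms of the others can be projected onto a single weighted component of the grading, and it is this projected, homogeneous identity that the algorithm uses. Once that point is made, the remainder is a direct bookkeeping of weighted degrees, and the conclusion follows immediately by appealing to Definition~\ref{def5}(i).
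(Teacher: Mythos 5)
Your proof is correct and follows essentially the same route as the paper's: both arguments invoke Corollary~\ref{C1} to see that $\psi$ is $\Lambda_j$-homogeneous of weighted degree $\tfrac{1}{2}$, match weighted degrees across \eqref{ht1} to conclude that $\gamma_{c_u}$ has weighted degree equal to the weight of $z_{c_u}$ minus that of $z_k$, and note that integration in $z_k$ then restores exactly the weight of $z_{c_u}$, so the transformation is $\Lambda_j$-homogeneous by Definition~\ref{def5}(i). Your explicit projection of $\gamma_{c_u}$ onto its relevant weighted-homogeneous component is a small but welcome refinement that the paper leaves implicit, since its proof simply speaks of ``the weighted degree of $\gamma_{c_u}$'' as though $\gamma_{c_u}$ were already homogeneous.
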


\begin{proof}
We start the proof by recalling from Theorem \ref{thm1} that the leading polynomial is a sum of squares at every step of the Kol\' a\v r algorithm. Hence $P_j$ is a sum of squares. Let $\Lambda_j=(\lambda_1, \dots, \lambda_n).$ Since variables are not ordered in increasing weight order, we let $\phi: \{1,\dots, n\} \to \{1,\dots, n\}$ be the bijection $\phi=(\phi_1, \dots, \phi_n)$ such that the variable $z_i, \ 1 \leq i \leq n,$ has weight $\lambda_{\phi_i}.$ 

We will show that the term $\gamma_{c_u}$ in the polynomial transformation is of weighted degree $(\lambda_{\phi_{c_u}} - \lambda_{\phi_k}).$ Let $\nu$ be the weighted degree of $\gamma_{c_u}$ with respect to the weight $\Lambda_j.$  By our hypothesis, the weighted degrees of $\frac{\partial \psi}{\partial z_{c_u}}$ and $\frac{\partial \psi}{\partial z_k} $ are $\frac{1}{2} - \lambda_{\phi_{c_u}}$ and $\frac{1}{2} - \lambda_{\phi_k}$ respectively since all generators of the leading polynomial ideal $\mathcal{I}_{P_j}$ are of weighted degree $\frac{1}{2}$ with respect to $\Lambda_j.$ The weighted degree of the right hand side of the expression given in \eqref{ht1} is $\nu + \frac{1}{2} - \lambda_{\phi_{c_u}}.$ Hence solving the equation in \eqref{ht1} for $\nu$ gives $\nu = \lambda_{\phi_{c_u}} - \lambda_{\phi_k}.$ Thus, the weighted degree  of $\zeta_{c_u} = \int_0^{z_k} \gamma_{c_u}(t) \ dt$ is $\lambda_{\phi_{c_u}}$ as required. 
\end{proof}

\begin{remark}
The polynomial $\gamma_{c_u}$ cannot depend on the variable $z_{c_u}$ because if it were to depend on $z_{c_u},$ then its weighted degree would satisfy $\nu \geq \lambda_{\phi_{c_u}},$ but $\nu = \lambda_{\phi_{c_u}} - \lambda_{\phi_k},$ which gives a contradiction because $\lambda_{\phi_k} >0.$
\end{remark}

\medskip

\begin{lemma} \label{htrans2}
Assume that the D'Angelo 1-type of $M$ in $\C^{n+1}$ at 0 is finite. At step j of the Kol\' a\v r algorithm for the computation of the multitype at $0,$ let $\text{P}_j$ be the leading polynomial, and let $\mathcal{I}_{P_j} = \left < h \right > \subset \C[z_1, \dots, z_n]$ be the corresponding leading polynomial ideal.

If the Row Reduction algorithm is applied to the complex Jacobian matrix $\text{J}(h),$ then every dependent row of $\text{J}(h)$ vanishes. In other words, the leading polynomial ideal $\mathcal{I}_{P_j}$ is independent of the largest number of variables after the Row Reduction algorithm is applied to $\text{J}(h).$
\end{lemma}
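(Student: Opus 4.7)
The plan is to argue by contradiction after translating the claim to the Jacobian setting. Write $\tilde h=(\tilde h_{1},\dots,\tilde h_{N})$ for the generators of the leading polynomial ideal after all polynomial transformations produced by the Row Reduction algorithm have been composed. The goal is equivalent to showing that every row of $\text{J}(\tilde h)$ which is dependent in the sense of the introduction must be identically zero.

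First I would establish a Jacobian--Levi equivalence for sum of squares domains: the $k$-th row of $\text{J}(\tilde h)$ is dependent if and only if the $k$-th row of the Levi matrix $\text{A}_{\tilde P_{j}}=\text{J}(\tilde h)\text{J}^{*}(\tilde h)$ is. The forward direction follows by right-multiplying the Jacobian relation by $\text{J}^{*}(\tilde h)$. For the converse, if $v\in\C[z]^{n}$ witnesses a dependency $R_{k}=\sum_{l\neq k}\alpha_{l}R_{l}$ for the $k$-th row of $\text{A}_{\tilde P_{j}}$, then
\[
v\,\text{A}_{\tilde P_{j}}\,v^{*}=\sum_{s=1}^{N}\bigl|(v\,\text{J}(\tilde h))_{s}\bigr|^{2}=0
\]
identically in $z,\bar z$, and a sum of modulus squares of holomorphic polynomials can vanish identically only when each summand is identically zero, so $v\,\text{J}(\tilde h)=0$. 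Combined with Proposition \ref{P1}, the lemma then reduces to showing that at termination, if $\tilde R_{k}=\sum_{l\neq k}\alpha_{l}\tilde R_{l}$ with $\alpha_{l}\in\C[z]$, then $\tilde R_{k}\equiv 0$.

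Next I would suppose, for contradiction, that some row $\tilde R_{k}\neq 0$ is dependent and pick an index $i_{0}$ with $\partial_{z_{k}}\tilde h_{i_{0}}\neq 0$. Restricting the global relation to column $i_{0}$ yields
\[
\partial_{z_{k}}\tilde h_{i_{0}}=\sum_{l\neq k}\alpha_{l}\,\partial_{z_{l}}\tilde h_{i_{0}}
\]
inside $\mathcal{I}_{grad}(\tilde h_{i_{0}})$, exhibiting $\partial_{z_{k}}\tilde h_{i_{0}}$ as redundant modulo the other column generators. If one can guarantee that at termination the nonzero entries of column $i_{0}$ form a minimal generating set of $\mathcal{I}_{grad}(\tilde h_{i_{0}})$, this relation is an immediate contradiction: either $\partial_{z_{k}}\tilde h_{i_{0}}$ must be dropped from the minimal set and therefore be zero, contradicting our choice, or its presence together with the dependence above contradicts minimality of the generating set.

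The hard part, and the main obstacle, is establishing this minimality at termination, namely that the simplification achieved when $\mathcal{I}_{grad}(h_{i_{0}})$ was processed is not undone by subsequent row operations applied when other gradient ideals are processed. This is precisely the role of the algorithm's restriction that central rows and central generators may not be reused later. Combining this restriction with the explicit correspondence between row--column operations and polynomial transformations (Lemma \ref{rcoperationi}) and the $\Lambda_{j}$-homogeneity of each such transformation (Lemma \ref{htrans}), I would argue by induction on the processing step that a later operation $R_{\ell}-\partial_{z_{\ell}}m\,R_{\ell'}\to R_{\ell}$ arising from processing $\mathcal{I}_{grad}(h_{s})$ with $s\neq i_{0}$ cannot reintroduce a nonzero $(k,i_{0})$-entry starting from zero: doing so would either force $R_{\ell'}$ to be a forbidden central row from the step that processed $i_{0}$ or violate the weight balance imposed by $\Lambda_{j}$-homogeneity. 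Since only finitely many gradient ideals are processed and each produces finitely many admissible operations, the algorithm terminates and the $(k,i_{0})$-entry remains zero, yielding the desired contradiction.
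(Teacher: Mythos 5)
Your opening reduction is sound and is actually a useful supplement to the paper: the equivalence between dependency of a row of $\text{J}(\tilde h)$ and of the corresponding row of $\text{A}_{\tilde P_j}=\text{J}(\tilde h)\text{J}^*(\tilde h)$, via $v\,\text{A}\,v^*=\sum_s|(v\,\text{J})_s|^2\equiv 0\Rightarrow v\,\text{J}\equiv 0$, is correct and the paper only gestures at it informally. The genuine gap is in the core of your argument, and you flag it yourself without closing it. Your contradiction rests on the premise that, at termination, the nonzero entries of column $i_0$ form a minimal generating set of $\mathcal{I}_{grad}(\tilde h_{i_0})$. The Row Reduction algorithm does not guarantee this: for every gradient ideal processed after the first, the simplification is explicitly constrained to avoid reusing earlier central rows and central generators, so those later gradient ideals need not be reduced to minimal generating sets at all. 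Your fallback, an induction showing that later operations cannot reintroduce a nonzero $(k,i_0)$-entry, is only a sketch; a later operation $\text{R}_k-(\partial_{z_k}m)\text{R}_{\ell'}\to \text{R}_k$ with $\ell'$ a \emph{new} central row (hence not forbidden) changes the $(k,i_0)$-entry by $-(\partial_{z_k}m)\,\partial_{z_{\ell'}}\tilde h_{i_0}$, and neither the forbidden-row rule nor $\Lambda_j$-homogeneity obviously forces this to vanish. As written, the ``hard part'' you identify is precisely the content of the lemma, and it remains unproved.

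The paper avoids both difficulties by arguing directly rather than by contradiction. It fixes the single vector relation $\partial_{z_k}h=\sum_{u=1}^v\gamma_{c_u}\,\partial_{z_{c_u}}h$, which holds componentwise with the \emph{same} coefficients $\gamma_{c_u}$ in every gradient ideal, and then tracks this one relation through the algorithm: processing $\mathcal{I}_{grad}(h_i)$ performs exactly the row operations that subtract $\sum_u\gamma_{c_u}\partial_{z_{c_u}}h$ over the indices $u$ with $\partial_{z_{c_u}}h_i\neq 0$, so after that pass the surviving relation for the $k$-th row involves a strictly smaller index set. Since there are finitely many gradient ideals and finitely many indices $c_u$, the $k$-th row of $\text{J}(h)$ is annihilated after finitely many passes. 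No minimality claim and no separate non-reintroduction lemma are needed. If you want to salvage your route, you would need to either prove the non-reintroduction statement carefully (quantifying over all admissible later operations) or, more simply, switch to tracking the explicit dependency relation as the paper does.
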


\begin{proof}
From Theorem \ref{thm1} the leading polynomial $P_j$ is a sum of squares, and so let $P_j = \sum_{i=1}^N |h_i|^2.$ Then the leading polynomial ideal $\mathcal{I}_{\text{P}_j}$ is $\left < h \right > = \left < h_1, \dots, h_N \right >,$ and let the gradient ideal of each generator $h_i$ be $\mathcal{I}_{grad}(h_i) = \left <\frac{\partial h_i}{\partial z_1}, \cdots, \frac{\partial h_i}{\partial z_n} \right >.$

Now, assume that $R_k,$ the $k$-th row of $\text{J}(h)$ is dependent. We will show that the generator $\frac{\partial h}{\partial z_k}$ of the Jacobian module given by $\mathfrak{J}_{\left<h\right>} = \left [\frac{\partial h}{\partial z_1}, \cdots, \frac{\partial h}{\partial z_n} \right ]$ vanishes after applying the Row Reduction algorithm on the complex Jacobian matrix $\text{J}(h).$ Since $R_k$ is dependent, we can write the generator $\frac{\partial h}{\partial z_k}$ as
\begin{equation} \label{ht2}
\frac{\partial h}{\partial z_k} = \sum_{u =1}^{v} \gamma_{c_u} \frac{\partial h}{\partial z_{c_u}},
\end{equation}
for some $k, \ c_u \neq k, \ v < n,$ and $\gamma_{c_u}$ a nonzero polynomial in $\C[z_1, \dots, z_n]$ for every $u.$ Hence every generator $\frac{\partial h_i}{\partial z_k}$ of the gradient ideal $\mathcal{I}_{grad}(h_i)$ can be written as
\begin{equation} \label{ht3}
\frac{\partial h_i}{\partial z_k} = \sum_{u = 1}^{v} \gamma_{c_u} \frac{\partial h_i}{\partial z_{c_u}},
\end{equation}
for $i = 1, \dots, N$ and the same polynomial coefficients $\gamma_{c_u}.$ Thus, it suffices to show that $\frac{\partial h_i}{\partial z_k}$ vanishes at the termination of the algorithm for every $i = 1, \dots, N.$ Consider the ideal $\mathcal{I}_{grad}(h_i)$ for some $i \in \{1, \dots, N \}.$ If the generator $\frac{\partial h_i}{\partial z_k}$ is zero, then there is nothing to be done, and so we move to a different gradient ideal. Hence suppose that $\frac{\partial h_i}{\partial z_k}$ is nonzero. Then at least one of the generators $\frac{\partial h_i}{\partial z_{c_u}}$ is nonzero for some $u.$ Suppose that $\frac{\partial h_i}{\partial z_{c_u}} \neq 0$ for all $u \in \{1, \dots, w\}$ for $w \leq v.$ Since it satisfies the condition in \eqref{ht3}, we perform the elementary row operations $\text{R}_{\ell} - \frac{\partial \zeta_{c_u}}{\partial z_{\ell}} \text{R}_{c_u} \rightarrow \text{R}_{\ell},$ for all variables $z_{\ell}$ in $\zeta_{c_u} = \int_0^{z_k} \gamma_{c_u}(t) \ dt$ and for all $u=1, \dots, w.$ This eliminates the term $\sum_{u = 1}^w \gamma_{c_u} \frac{\partial h_e}{\partial z_{c_u}},$ from the ideal $\mathcal{I}_{grad}(h_i).$ The generator $\frac{\partial h}{\partial z_k}$ of the Jacobian module becomes 
\begin{equation} \label{ht4}
\frac{\partial h}{\partial z_k} = \sum_{u = w+1}^v \gamma_{c_u} \frac{\partial h}{\partial z_{c_u}}
\end{equation}
after the row operations have been performed on $\text{J}(h).$ Note here that the generator $\frac{\partial h_e}{\partial z_{c_u}},$ for all $e \neq i$ and $u=1, \dots,w$ cannot be central in any simplification process in the gradient ideal $\mathcal{I}_{grad}(h_e)$ after the row operations.

Next, consider another gradient ideal $\mathcal{I}_{grad}(h_e)$ for $e \neq i.$ Then its $k$-th generator after the reduction operation is 
\begin{equation} \label{ht5}
\frac{\partial h_e}{\partial z_k} - \sum_{u = 1}^{w} \gamma_{c_u} \frac{\partial h_e}{\partial z_{c_u}} = \sum_{u = w+1}^{v} \gamma_{c_u} \frac{\partial h_e}{\partial z_{c_u}}.
\end{equation}
If the expression in \eqref{ht5} equals zero, then there is nothing left to be done. If the expression in \eqref{ht5} is nonzero, then $\frac{\partial h_e}{\partial z_{c_u}} \neq 0$ for some $u = w+1, \dots, q$ with $q \leq v.$ Perform the elementary row operations $\text{R}_{\ell} - \frac{\partial \zeta_{c_u}}{\partial z_{\ell}} \text{R}_{c_u} \rightarrow \text{R}_{\ell},$ for all variables $z_{\ell}$ in $\zeta_{c_u} = \int_0^{z_k} \gamma_{c_u}(t) \ dt$ and for all $u=w+1, \dots, q$ to eliminate the term $\sum_{u = w+1}^q \gamma_{c_u} \frac{\partial h_e}{\partial z_{c_u}},$ where $q \leq v.$ We follow this process through in each of the distinct gradient ideals until all gradient ideals have been considered. The expression in \eqref{ht4} becomes zero at some point; otherwise, we get a contradiction to $R_k$ being dependent.

\end{proof}

\begin{example} \label{newKc}
Let the hypersurface $M \subseteq \C^5 $ be given by $r=0$ with
\[r = \text{2Re}(z_{5}) + |(z_1 + z_2 z_4 )^2+ z_2^4|^2 + |(z_1 + z_2 z_3^2)^2 |^2 + |z_2^9|^2 + |z_3^{10}|^2 + |z_4^{12}|^2. \]

Let $B = 2(z_1+ z_2 z_4), \ C = 2(z_1 + z_2 z_3^2),$ and let the ideal associated to the domain $M$ be $\left <h \right >= \left < (z_1 + z_2 z_4 )^2+ z_2^2, (z_1 + z_2 z_3^2)^2, z_2^9, z_3^{10}, z_4^{12} \right >.$ The complex Jacobian matrix is given by  
\[\text{J}(h)=
\begin{pmatrix}
B & C & 0& 0 & 0 \vspace{0.5cm}\\

z_4B+4z_2^3 & z_3^2 C & 9z_2^8 & 0 & 0 \vspace{0.5cm}\\ 

0& 2z_2z_3C & 0& 10z_3^9 & 0 \vspace{0.5cm}\\

z_2B & 0 & 0& 0 &  12z_4^{11} 
\end{pmatrix}
.\]
The Bloom-Graham type is 4, $\mathcal{I}_{P_1} = \left<z_1^2, z_1^2\right>,$ and $\Lambda_1= (\frac{1}{4}, \frac{1}{4}, \frac{1}{4}, \frac{1}{4}).$ The maximum $\text{W}_1 = \frac{1}{8},$ the leading polynomial ideal $\mathcal{I}_{P_2} = \left< (z_1 + z_2 z_4 )^2+z_2^4, z_1^2 \right>,$ and $\Lambda_2= (\frac{1}{4}, \frac{1}{8}, \frac{1}{8}, \frac{1}{8}).$ The maximum $\text{W}_2 = \frac{1}{16},$ $\mathcal{I}_{P_3} = \left<(z_1 + z_2 z_4 )^2 +z_2^4, (z_1 + z_2 z_3^2 )^2\right>,$ and $\Lambda_3= (\frac{1}{4}, \frac{1}{8}, \frac{1}{8}, \frac{1}{16}).$ The complex Jacobian matrix corresponding to $\mathcal{I}_{P_3}$ is given by
\[\text{J}(h_1, h_2)=
\begin{pmatrix}
B & C  \vspace{0.5cm}\\

z_4B+4z_2^3 & z_3^2 C  \vspace{0.5cm}\\ 

0& 2z_2z_3C \vspace{0.5cm}\\

z_2B & 0  
\end{pmatrix}
.\]
Consider the gradient ideal $\mathcal{I}_{grad}(h_1) = \left < B, z_4B+4z_2^3, 0, z_2B \right >.$ Since $ \frac{\partial h_1}{\partial z_4} = z_2 \frac{\partial h_1}{\partial z_1},$ its simplification is $\mathcal{I}_{grad}(h_1) = \left < B, 4z_2^3 \right >.$ We perform the elementary row operations $\text{R}_2 - z_4 \text{R}_1 \rightarrow \text{R}_2$ and $\text{R}_4 - z_2\text{R}_1 \rightarrow \text{R}_4$ on the matrix $\text{J}(h).$ The matrices above become
\[\text{J}(h)=
\begin{pmatrix}
B & C & 0 & 0 & 0 \vspace{0.5cm}\\

4z_2^3 & (z_3^2-z_4)C & 9z_2^8 & 0 & 0 \vspace{0.5cm}\\ 

0 & 2z_2z_3C& 0 & 10z_3^9 & 0 \vspace{0.5cm}\\

0 & -z_2C& 0&  0 &  12z_4^{11} 
\end{pmatrix} \quad \text{and} \quad \text{J}(h_1, h_2)=\begin{pmatrix}
B & C  \vspace{0.5cm}\\

4z_2^3 &  (z_3^2-z_4)C  \vspace{0.5cm}\\ 

0 & 2z_2z_3C \vspace{0.5cm}\\

0 & -z_2C 
\end{pmatrix}.
\] 
These operations correspond to the polynomial transformation $\tilde z_1 = z_1 + z_2z_4: \:\: \tilde z_{\omega} = z_{\omega}$ for $\omega \neq 1,$ and now $B = 2\tilde z_1 $ and $C = 2(\tilde z_1 + \tilde z_2(\tilde z_3^2 - \tilde z_4)).$ The generator $\frac{\partial h_2}{\partial z_1}$ cannot be a central generator in the simplification of $\mathcal{I}_{grad}(h_2),$ and row 1 cannot be used as a central row in any subsequent elementary row operations. Consider the next gradient ideal $\mathcal{I}_{grad}(h_2) = \left < C, (\tilde z_3^2 - \tilde z_4)C, 2\tilde z_2 \tilde z_3C, - \tilde z_2C \right >.$ Here the generator $-\tilde z_2 C$  in the fourth component is the only central generator in $\mathcal{I}_{grad}(h_2).$ Thus, $ \frac{\partial h_2}{\partial z_3} = - 2\tilde z_3 \frac{\partial h_2}{\partial z_4},$ and its simplification is $\left < C, (\tilde z_3^2 - \tilde z_4)C, - \tilde z_2C \right >.$

We perform the elementary row operations $\text{R}_3 + 2z_3 \text{R}_4 \rightarrow \text{R}_3$ on the matrix $\text{J}(h).$ The matrices become
\[\text{J}(h)=
\begin{pmatrix}
B & C & 0 & 0 & 0 \vspace{0.5cm}\\

4z_2^3 & (z_3^2-z_4)C & 9z_2^8 & 0 & 0 \vspace{0.5cm}\\ 

0 & 0& 0 & 10z_3^9 & 24z_3z_4^{11} \vspace{0.5cm}\\

0 & -z_2C& 0&  0 &  12z_4^{11} 
\end{pmatrix} \quad \text{and} \quad \text{J}(h_1, h_2)=\begin{pmatrix}
B & C  \vspace{0.5cm}\\

4z_2^3 &  (z_3^2-z_4)C  \vspace{0.5cm}\\ 

0 & 0 \vspace{0.5cm}\\

0 & -z_2C 
\end{pmatrix}.
\] 
This operation corresponds to the polynomial transformation $\dot z_4 = \tilde z_4 - \tilde z_3^2; \:\: \dot z_{\omega} = \tilde z_{\omega}$ for $\omega \neq 4,$ and now $B = 2\dot z_1 $ and  $C = 2(\dot z_1 - \dot z_2\dot z_4).$ The leading polynomial ideal $\mathcal{I}_{P_3} = \left<\dot z_1^2 + \dot z_2^4, (\dot z_1 - \dot z_2 \dot z_4 )^2 \right>.$  The maximum $W_3 = \frac{1}{20},$ and $\Lambda_4 = \left(\frac{1}{4}, \frac{1}{8}, \frac{1}{8}, \frac{1}{20}\right).$   
\end{example}

\vspace{1cm}


\bibliography{Article}
\bibliographystyle{abbrv}
\end{document}